%
%
%
%
\documentclass[12pt]{amsart}
\usepackage{pgfplots}
\usepackage{float}
\usepackage[utf8]{inputenc}
\usepackage{graphicx}
\usepackage{color}
\usepackage{mdwlist}
\usepackage{amssymb}
\usepackage{subfigure}
\usepackage{relsize}
\usepackage{graphics}
\allowdisplaybreaks

\usepackage{cleveref}

\textwidth=16cm \oddsidemargin=.5cm \evensidemargin=.5cm
\textheight=22.5cm \topmargin=-0.2cm

\def\g{\lambda}
\def\psiM{\psi_{\max}}
\def\psim{\psi_{\min}}
\def\rhoM{\rho_{\max}}
\def\rhom{\rho_{\min}}
\def\betam{\beta_{\min}}
\def\betaM{\beta_{\max}}
\def\SigmaL{\Sigma_L}

\newcommand{\beq}{\begin{eqnarray*}}
\newcommand{\feq}{\end{eqnarray*}}
\newcommand{\beqn}{\begin{eqnarray}}
\newcommand{\feqn}{\end{eqnarray}}

\newcommand{\RN}[1]{%
  \textup{\uppercase\expandafter{\romannumeral#1}}%
}

\newtheorem{theorem}{Theorem}[section]
\newtheorem*{theorem*}{Theorem}
\newtheorem{lemma}[theorem]{Lemma}
\newtheorem{corollary}[theorem]{Corollary}
\newtheorem{proposition}[theorem]{Proposition}
\theoremstyle{definition}

\theoremstyle{remark}
\newtheorem{remark}[theorem]{Remark}
\numberwithin{equation}{section}

\newcommand{\red}[1]{\textcolor{red}{#1}}
\newcommand{\blue}[1]{\textcolor{blue}{#1}}
\newcommand\numberthis{\addtocounter{equation}{1}\tag{\theequation}}

\pgfplotsset{compat=1.17}

\begin{document}
\title[The Euler-Poisson-alignment system]
{Critical thresholds in the Euler-Poisson-alignment system}

\author{Manas Bhatnagar}
\author{Hailiang Liu}
\address{Department of Mathematics, Iowa State University, Ames, Iowa 50010}
\email{manasb@iastate.edu}
\email{hliu@iastate.edu}

\author{Changhui Tan}
\address{Department of Mathematics, University of South Carolina, Columbia, South Carolina 29208}
\email{tan@math.sc.edu}
\keywords{The Euler-Poisson-alignment system, critical thresholds,
  global regularity, shock formation, invariant region}
\subjclass[2010]{35B30; 35Q35; 35L65; 35L67}

\begin{abstract}  
This paper is concerned with the global wellposedness of the
Euler-Poisson-alignment (EPA) system. This system arises from
collective dynamics, and features two types of nonlocal interactions: the
repulsive electric force and the alignment force.
It is known that the repulsive electric force generates oscillatory
solutions, which is difficult to be controlled by the nonlocal
alignment force using conventional comparison principles.
We construct \emph{invariant regions} such that the solution trajectories cannot
exit, and therefore obtain global wellposedness for subcritical initial
data that lie in the invariant regions. Supercritical regions of
initial data are also derived which leads to finite-time singularity formations.
To handle the oscillation and the nonlocality,
we introduce a new way to construct invariant regions piece by piece in the phase
plane of a reformulation of the EPA system.
Our result is extended to the case when the alignment force is weakly
singular. The singularity leads to the loss of a priori bounds crucial in our analysis.
With the help of improved estimates on the nonlocal quantities, we
design non-trivial invariant regions that guarantee global
wellposedness of the EPA system with weakly singular alignment interactions.      
\end{abstract}
\maketitle

\section{Introduction}
\label{intro}
In this paper, the point of concern is the following one-dimensional
Euler-Poisson-alignment (EPA) system 
\begin{subequations}
\label{mainsys}
\begin{align}
& \rho_t + (\rho u)_x = 0, 
    \label{mainsysmass}\\
& u_t + u u_x = -k\phi_x +\int_{\mathbb{R}}\psi(x-y)(u(y)-u(x))\rho(y)dy, \label{mainsysmom}\\
& -\phi_{xx} = \rho - c, \label{mainsyspoisson}
\end{align}
\end{subequations}
subject to smooth initial density and velocity 
$$
\big.(\rho(t,\cdot),u(t,\cdot))\big|_{t=0} = (\rho_0\geq 0,u_0).
$$

This system can be viewed as the pressureless Euler equations with two
types of nonlocal interacting forces on the right-hand side of the
momentum equation \eqref{mainsysmom}: the \emph{electric force} and
the \emph{alignment force}.

The electric force is modeled through an interacting protential
$\phi$, that is governed by the Poisson equation
\eqref{mainsyspoisson}, with a constant $c$ representing the
background charge that can be zero or a positive constant.
The parameter $k$ signifies the property of the underlying force:
repulsive $k>0$ or attractive $k<0$.
When only electric force is present, i.e. $\psi\equiv0$, \eqref{mainsys} reduces to the
classical \emph{Euler-Poisson system}.
It has been an area of intensive study due to their vast relevance in
modeling physical phenomena \cite{BRR94, HJL81, Ja75, Ma86, MaPe90,
  MRS90}, including semiconductor and plasma dynamics.

The alignment force describes the collective motion of an interacting
system, where the \emph{influence function} $\psi$ characterizes the
strength of the pairwise velocity alignment interaction.
Naturally, $\psi(x)=\psi(|x|)$ is assumed to be radial and decreasing
in $\mathbb{R}_+$.
When only alignment force is present, i.e. $k=0$,
the system reduces to the \emph{Euler-alignment system}, which serves
as a macroscopic realization of the celebrated agent-based
Cucker-Smale flocking model \cite{CS07a,CS07b}, c.f. \cite{HaTa08} for
a derivation.


The purpose of this work is to study the global regularity of the EPA system
\eqref{mainsys} for general initial data. It is well-known that the
finite-time breakdown of the pressureless Euler equations is generic,
see e.g. \cite{Lax64}. In particular, for all smooth initial data such
that $u_0$ is non-increasing, the solutions develop finite-time shock
formations. On the other hand, the interacting forces intend to help
avoiding the singularities.

For the 1D Euler-Poisson system with a repulsive force, a
\emph{critical threshold phenomenon}
is shown in \cite{ELT01}: there exists a large class of \emph{subcritical}
initial data that lead to global smooth solutions, while a class of
\emph{supercritical} initial data lead to finite-time shock
formations. See e.g. \cite{Lee2, LT02, LT03, TW08, Tan21} on
extensions to higher dimensions and with pressure.

For the Euler-alignment system, a similar critical threshold
phenomenon is observed in \cite{TT14} when the influence
function $\psi$ is bounded, c.f. also \cite{CCTT16, HT17}.
Recently, there is a growing interest on singular influence function
that are unbounded at the origin. When $\psi$ is \emph{strongly
  singular}, namely $\psi$ is non-integrable near the origin, it has
been shown in \cite{DKRT18} and \cite{ST17} independently that all
non-vacuous periodic initial data lead to global smooth solutions.
When $\psi$ is \emph{weakly singular}, namely unbounded but integrable
at the origin, critical thresholds are obtained in \cite{Tan20}, also see \cite{BLws} for improved bounds on density with any integrable $\psi$. 
For recent development on the Euler-alignment system, we refer readers
to the book \cite{Shv21} and the references therein.

For the EPA system \eqref{mainsys}, we expect the critical threshold
phenomenon when the influence function $\psi$ in the alignment force is
bounded. Such behavior has been first shown in \cite{CCTT16}, where
the Poisson equation \eqref{mainsyspoisson} is assumed to have a zero
background ($c=0$).
The authors in \cite{BL201} study the EPA system with attractive
electric forces ($k<0$) and nonzero, non-constant background ($c(x)>0$). The dynamics
are more subtle. They design highly non-trivial comparison principles
to take care of the nonlocality that arises from the alignment force, and
manage to obtain bounds on subcritical and supercritical regions of
initial data, thus describing the critical threshold phenomenon.

Our main focus of this paper is on the EPA system \eqref{mainsys}
where the electric forces is repulsive ($k>0$) and with non-zero
background ($c>0$).
This type of electric forces is physically relevant. The solution to
the corresponding Euler-Poisson system is known to generate
solutions that oscillate, e.g. \cite{ELT01}.
Such distinct feature makes it difficult to incorporate with the
nonlocal alignment forces. In particular, the comparison principles
used in \cite{BL201} are no longer valid. New analytical tools are
needed to capture the critical threshold phenomenon.

For convenience, we assume the spatial domain to be a torus
$\mathbb{T}=[-\tfrac12, \tfrac12)$, namely we consider $1$-periodic data.
We shall comment that many of our results can be extended to the
whole real line case with
$$
\int_{-\infty}^\infty (\rho_0(x)-c)dx = 0.
$$
We shall leave this case for future investigation.

Under the spatial domain $\mathbb{T}$, the Poisson equation
\eqref{mainsyspoisson} requires the background charge to be the
average density, that is conserved in time due to \eqref{mainsysmass}.
We have
\begin{equation}\label{eq:c}
  c=\int_{\mathbb{T}} \rho_0(x)dx.
\end{equation}
One useful parameter that plays an important role in quantifying the
strength of the electric force is
\begin{equation}\label{eq:gamma}
  \g=2\sqrt{\frac{k}{c}}.
\end{equation}
It is assumed to be a positive finite number throughout this paper.
The alignment force can be equivalently expressed as
\[\int_{\mathbb{T}}\psi_{\rm per}(y)(u(x+y)-u(x))\rho(x+y)\,dy,\]
with the periodic influence function
\[\psi_{\rm per}(x):=\sum_{m\in\mathbb{Z}}\psi(x+m),\quad\forall~x\in\mathbb{T},\]
which is symmetric with respect to zero.
We will continually use $\psi$ to represent the periodic influence
function for simplicity.

Our first main result is on the global wellposedness of the EPA system
\eqref{mainsys} with repulsive electric force $k>0$ and \emph{bounded} alignment influence:
\begin{equation}\label{eq:psibounded}
  0\le\psim\le\psi(x)\le\psiM,\quad\forall~x\in\mathbb{T}.
\end{equation}
We construct a class of subcritical initial data and show solutions
are globally regular; on the other hand, we also find a class of
supercritical initial data such that solutions experience finite-time
singularity formations.
The precise descriptions of such critical threshold phenomenon are
stated in Theorems \ref{gs1} and \ref{ftb1}.
Depending on the relative strength between the electric force and the
alignment force, there are three different scenarios:
(i). weak alignment ($\psiM<\g$), (ii). strong alignment
($\psim\ge\g$), and (iii). medium alignment ($\psim<\g\le\psiM$).
We construct subcritical regions $\Sigma_i$ and supercritical regions
$\Delta_i$ on initial data for each scenario, that leads to either
global wellposedness or finite-time blowup, respectively.

In particular, when the alignment force is weak or medium, the
solution is oscillatory. Instead of a direct comparison with an
auxiliary system, we construct an \emph{invariant region} in the phase
plane of the solutions along each characteristic path.
The novelty of our construction is that we use different auxiliary
systems to build segments of the boundary of the invariant regions,
and then glue them together. This allows us to handle the nonlocal
alignment force while the underlying Euler-Poisson system is highly
oscillatory.

We would like to point out a special case when $\psi$ is a constant,
known as \emph{all-to-all} alignment interactions. In this case, the
alignment force reduces to a local and linear damping, and
\eqref{mainsys} becomes the  \emph{damped
Euler-Poisson system}. 
The invariant regions that we constructed
are consistent with the sharp critical threshold conditions
obtained in \cite{BL19} on the damped Euler-Poisson system.

The next focus is on the singular alignment interactions.
When $\psi$ is strongly singular, the EPA system \eqref{mainsys} was
studied in \cite{KT18}. The surprising result indicates that the
alignment force dominates the electric force, regardless of whether
the electric force is attractive or repulsive. Any smooth non-vacuous
initial data lead to global smooth solutions.
The argument holds even if we drop the assumption $\psi\ge0$, namely
misalignment is allowed, as discussed in \cite{MTX21}.

Our second main result is on the EPA system \eqref{mainsys} with
repulsive electric force $k>0$ and \emph{weakly singular} alignment
influence:
\begin{equation}\label{eq:psiweaksing}
  \psi(x)\geq0,\quad\forall~x\in\mathbb{T},\quad\text{and}\quad
  \|\psi\|_{L^1(\mathbb{T})}<+\infty.
\end{equation}
In particular, $\psi$ can be unbounded at $x=0$.
Although the singularity is not strong enough to produce dominating
dissipation like the strongly singular case, the global behavior is
not expected to be the same as the case when $\psi$ is bounded.
Without the $L^\infty$ bound on $\psi$, we do not have the following a priori
bounds on the quantity $\psi\ast\rho$ (here $\ast$ denotes the
spatial convolution)
\begin{equation}\label{eq:psirho}
  \psi_m c\le \int_{\mathbb{T}}\psi(y)\rho(t,x-y)\,dy\le \psi_Mc,
  \quad\forall~t\geq0,
\end{equation}
which plays an essential role in the global
regularity of the Euler-alignment system (see \cite{Tan20}), as well
as our approach to the EPA system with bounded alignment interactions.

We construct a subcritical region on initial data such that the
solution is globally regular. The main idea is to replace \eqref{eq:psirho} by 
\begin{equation}\label{eq:psirhonew}
  \|\psi\|_{L^1}\rhom\le \int_{\mathbb{T}}\psi(y)\rho(t,x-y)\,dy\le \|\psi\|_{L^1}\rhoM,\quad\forall~t\geq0,
\end{equation}
where the bounds depend on the maximum and minimum of the solution $\rho$.
Then choose appropriate constants 
$\rhom$ and $\rhoM$, and build an invariant region that
is a subset of $\{\rho_0 : \rhom\le\rho_0(x)\le \rhoM\}$.
However, with the bound \eqref{eq:psirhonew}, we are not
able to obtain a non-trivial invariant region using our analytical
framework, with any choice of $\rhom$ and $\rhoM$.
Indeed, for the Euler-alignment system, it is observed in
\cite{Tan20} that, without the a priori bounds
like \eqref{eq:psirho}, additional treatments are required to control
$\rhoM$, and the critical threshold is different from the scenario
when $\psi$ is bounded. The presence of the electric force adds
another layer of complexity. 
To overcome such difficulty, we obtain refined bounds of
\eqref{eq:psirhonew}, stated in Lemma \ref{psistarnewboundslem},
making use of the equation \eqref{eq:c}.
With the refined bounds, we can obtain non-trivial invariant regions
by the right choices of $\rhom$ and $\rhoM$, and show global
regularity of the EPA system \eqref{mainsys} if initial data lie in
these subcritical regions. The precise statement is presented as Theorem
\ref{l1gs}. 

This paper is arranged as follows. Section \ref{mainresults} contains
the statements of the main results in this paper. Section
\ref{analysis1} entails the constructions of the
subcritical and supercritical regions for \eqref{mainsys} with bounded
alignment influence, proving Theorems \ref{gs1} and \ref{ftb1}. The
first three subsections focus on the subcritical regions to the three
different scenarios respectively. The fourth subsection is on the
construction of the supercritical regions. 
Section \ref{weaklysing} is devoted to the construction of the
invariant region for \eqref{mainsys} with weakly singular alignment
influence, proving Theorem \ref{l1gs}.


\section{Main Results}\label{mainresults}
Let us start with a reformulation for the EPA system \eqref{mainsys} through an
auxiliary variable
\[G=u_x+\psi\ast\rho,\]
introduced in \cite{CCTT16}.
System \eqref{mainsys} can be expressed in the following equivalent form
\begin{subequations}
\label{localexeqsys}
\begin{align}
& G_t + (Gu)_x = k(\rho -c), \label{localexeqsys2}\\
& \rho_t + (\rho u)_x = 0, \label{localexeqsys1}\\
& u_x = G - \psi\ast\rho. \label{localexeqsys3}
\end{align}
\end{subequations}
The velocity $u$ can be recovered from \eqref{localexeqsys3}. It is
uniquely defined up to a constant shift. The constant can then be
uniquely determined by the total momentum $\int_{\mathbb{T}}\rho u
dx$, which is conserved in time.

We state a local wellposedness result for smooth solutions to
\eqref{localexeqsys}. The proof can be done using energy estimates on
the derivatives of $(G,\rho)$. See \cite[Theorem 2.1]{Tan20} for a
complete proof when $k=0$.
The result can be easily extended to the case when $k\neq0$, c.f. also
\cite{BL202,KT18}.


\begin{theorem}[Local wellposedness]
\label{local}
Consider the system \eqref{localexeqsys} with initial data
\begin{equation}\label{eq:initHs}
  G_0 \in H^s( \mathbb{T}),\quad s>\tfrac12,\quad
  \rho_0\in (L^1_+\cap H^s)(\mathbb{T}),
\end{equation}
and interactions with $k\in\mathbb{R}$, $\psi\in L^1(\mathbb{T})$.
Then, there exists a time $T>0$ such that the solution
\[ G \in C\big([0,T]; H^s(\mathbb{T})\big),\quad
\rho \in C\big([0,T]; (L^1_+\cap H^s)(\mathbb{T})\big).\]
Consequently, the EPA system \eqref{mainsys} has a smooth solution
\[\rho \in C\big([0,T]; (L^1_+\cap H^s)(\mathbb{T})\big),\quad
  u \in C\big([0,T]; H^{s+1}(\mathbb{T})\big).\]
Moreover, $T$ can be extended as long as 
\begin{equation}\label{BKM}
  \int_0^T \Big(\|G(t,\cdot)\|_{L^\infty} + \|\rho(t,\cdot)\|_{L^\infty}
  \Big) dt < \infty.
\end{equation}
\end{theorem}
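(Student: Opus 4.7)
The plan is to prove local wellposedness by standard $H^s$ energy estimates on the reformulated system \eqref{localexeqsys}, which I rewrite in quasilinear transport form as
\begin{align*}
  G_t + uG_x &= -u_x G + k(\rho-c), \\
  \rho_t + u\rho_x &= -u_x\rho, \\
  u_x &= G - \psi\ast\rho.
\end{align*}
The velocity $u$ is reconstructed from the last identity together with the conserved momentum $\int_{\mathbb{T}}\rho u\,dx$. Integrating \eqref{localexeqsys2} on $\mathbb{T}$ shows $\tfrac{d}{dt}\int G\,dx = 0$, so the compatibility $\int G\,dx = \|\psi\|_{L^1}c$ is propagated from the initial data, ensuring $u_x$ has zero mean and a unique periodic $u$ exists. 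Young's inequality $\|\psi\ast\rho\|_{L^\infty}\le\|\psi\|_{L^1}\|\rho\|_{L^\infty}$ is the key reason the argument only requires $\psi\in L^1$, and it directly controls $\|u_x\|_{L^\infty}$ by $\|G\|_{L^\infty}$ and $\|\rho\|_{L^\infty}$.

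First I would carry out the \emph{a priori} $H^s$ estimate. Applying $\Lambda^s=(I-\partial_x^2)^{s/2}$ to the two transport equations, pairing with $\Lambda^s G$ and $\Lambda^s\rho$, and invoking the Kato-Ponce commutator estimate (available for $s>1/2$ via $H^s(\mathbb{T})\hookrightarrow L^\infty(\mathbb{T})$) gives
\[
  \frac{d}{dt}\bigl(\|G\|_{H^s}^2 + \|\rho\|_{H^s}^2\bigr) \leq C\bigl(1+\|u_x\|_{L^\infty}\bigr)\bigl(\|G\|_{H^s}^2 + \|\rho\|_{H^s}^2\bigr) + C,
\]
with $C$ depending on $\|\psi\|_{L^1}$, $k$ and $c$. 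The zero-order terms $-u_xG$, $-u_x\rho$ and $k(\rho-c)$ contribute lower-order pieces that are easily absorbed. Together with the bound on $\|u_x\|_{L^\infty}$ this closes a differential inequality, yielding short-time boundedness of the $H^s$ norms.

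Next I would construct solutions via Friedrichs mollification: solve the mollified ODE system in $H^s$, use the uniform estimate above to extract a subsequence converging in $C([0,T];H^{s'})$ for $s'<s$, and upgrade to $C([0,T];H^s)$ by weak lower semicontinuity. Uniqueness follows from an $L^2$ estimate on the difference of two solutions, again controlled by $\|u_x\|_{L^\infty}$. Positivity of $\rho$ is immediate from the characteristic representation $\rho(t,X_t(\alpha)) = \rho_0(\alpha)\exp\bigl(-\int_0^t u_x(s,X_s(\alpha))\,ds\bigr)$, so $\rho\in L^1_+$ is preserved. The EPA solution is then obtained by recovering $u\in H^{s+1}$ from the nonlocal identity and defining $\phi$ by inverting $-\partial_x^2$ on $\rho-c$ with zero mean.

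For the continuation criterion \eqref{BKM}, I would revisit the energy estimate using the sharper Kato-Ponce form that places $L^\infty$ on $u_x$ and on one factor, yielding
\[
  \frac{d}{dt}\bigl(\|G\|_{H^s}^2 + \|\rho\|_{H^s}^2\bigr) \leq C\bigl(\|G\|_{L^\infty} + \|\rho\|_{L^\infty} + 1\bigr)\bigl(\|G\|_{H^s}^2 + \|\rho\|_{H^s}^2 + 1\bigr).
\]
Gronwall then shows that whenever $\int_0^T(\|G\|_{L^\infty}+\|\rho\|_{L^\infty})\,dt < \infty$, the $H^s$ norms remain bounded on $[0,T]$ and the solution extends past $T$. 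The main obstacle is the careful bookkeeping in the commutator estimates so that the top-order coupling is driven only by the $L^\infty$ norms of $G$ and $\rho$ (through $u_x$); the electric term $k(\rho-c)$ is strictly lower-order and essentially inert, which is precisely why the extension from the $k=0$ argument of \cite{Tan20} is routine.
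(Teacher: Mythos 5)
Your proposal matches the route the paper itself gestures at: the paper does not prove Theorem \ref{local} but cites \cite[Theorem 2.1]{Tan20} (the $k=0$ case) and remarks that the extension to $k\neq0$ is straightforward, the whole argument being exactly the mollification-plus-$H^s$-energy scheme with Kato--Ponce commutators that you sketch. Your added observations (conservation of $\int_{\mathbb{T}}G\,dx$ guaranteeing a well-defined periodic $u$, Young's inequality as the reason $\psi\in L^1$ suffices, and the inertness of the lower-order $k(\rho-c)$ term) are correct and are precisely the points that make the extension from $k=0$ to $k\neq 0$ routine.
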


The regularity criterion \eqref{BKM} indicates: the
global-in-time bounds on $G$ and $\rho$ are sufficient to obtain global
regularity.

Our first main result focuses on repulsive electric force $\g>0$ and
bounded influence functions $\psi$ in the alignment force \eqref{eq:psibounded}.

\begin{theorem}[Global solutions]
\label{gs1}
Consider \eqref{localexeqsys} with repulsive electric force $k>0$ and
bounded alignment influence $\psi$ satisfying \eqref{eq:psibounded}.
Suppose the initial data $(G_0, \rho_0)$ satisfies \eqref{eq:initHs}.
Then
\begin{enumerate}
    \item 
    \emph{Weak alignment} ($\psiM<\g$):
    under the admissible condition 
    \begin{equation}\label{eq:weakacc}
    \psiM - \psim < \frac{e^{\frac{\tan^{-1}\hat z}{\hat z}}\left(1 - e^{ -\frac{\pi}{\tilde z} - \frac{\pi}{\hat z} }\right) }{2\left(1 + e^{-\frac{\pi}{\tilde z}}\right)}\g,
    \end{equation}
    if the initial data lie in the subcritical region $\Sigma_1$, namely
    $$
    \big(G_0(x),\rho_0(x)\big)\in\Sigma_1, \quad \forall\,x\in\mathbb{T},
    $$
    then $(G, \rho)$ remain bounded in all time.
    \item
    \emph{Strong alignment} ($\psim\ge\g$):
    if the initial data lie in the subcritical region $\Sigma_2$, namely
    $$
   \big(G_0(x),\rho_0(x)\big)\in\Sigma_2, \quad \forall\,x\in\mathbb{T},
    $$
    then $(G, \rho)$ remain bounded in all time.
    \item
    \emph{Medium alignment} ($\psim<\g\le \psiM$):
    under the admissible condition 
    \begin{equation}\label{eq:medacc}
   \psiM - \psim < \frac{e^{\frac{\tan^{-1}\hat z}{\hat z}}}{2\left(1 + e^{-\frac{\pi}{\tilde z}} \right)}\g,
    \end{equation}
    if the initial data lie in the subcritical region $\Sigma_3$, namely
    $$
   \big(G_0(x),\rho_0(x)\big)\in\Sigma_3, \quad \forall\, x\in\mathbb{T},
    $$
    then $(G, \rho)$ remain bounded in all time.
\end{enumerate}
Consequently, \eqref{localexeqsys} has a global smooth solution.
Here, the parameters $\hat z$ and $\tilde z$ are defined as
\begin{equation}\label{eq:z}
  \hat z := \sqrt{\left(\frac{\g}{\psiM}\right)^2-1}
  \quad\text{and}\quad
  \tilde z := \sqrt{\left(\frac{\g}{\psim}\right)^2-1}.
\end{equation}
Note that $\hat z$, $\tilde z$ could be real, purely imaginary, as
well as infinity.
The regions $\Sigma_1, \Sigma_2$ and $\Sigma_3$ are subsets of
$\mathbb{R}\times\mathbb{R}_+$, defined in
\eqref{invregtransf}, \eqref{invregtransf2} and \eqref{invregtransf3}
respectively.
\end{theorem}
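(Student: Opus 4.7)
The plan is to reduce the problem to a differential inclusion along characteristics and build a bounded forward-invariant set $\Sigma_i$ in the $(G,\rho)$ phase plane for each of the three alignment regimes. Differentiating $G$ and $\rho$ along $\dot x=u$ in \eqref{localexeqsys} yields
\begin{align*}
\dot G &= k(\rho-c)-G(G-\phi),\\
\dot\rho &= -\rho(G-\phi),
\end{align*}
where $\phi(t,x):=(\psi\ast\rho)(t,x)$. Mass conservation \eqref{eq:c} together with \eqref{eq:psibounded} gives $\phi(t,x)\in[\psim c,\psiM c]$ uniformly in $t,x$. If a bounded set $\Sigma\subset\mathbb{R}\times\mathbb{R}_+$ is forward invariant under every frozen-$\phi$ vector field with $\phi$ in this interval, then $(G,\rho)$ stays bounded along every characteristic, and the continuation criterion \eqref{BKM} in Theorem \ref{local} yields a global smooth solution.

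To construct $\Sigma_i$, I would analyze the auxiliary autonomous system obtained by freezing $\phi\equiv\beta$ for a constant $\beta\in[\psim c,\psiM c]$. Its unique nontrivial equilibrium is $(\beta,c)$, and its linearization has characteristic polynomial $\lambda^2+\beta\lambda+kc=0$. Hence the equilibrium is a stable spiral when $\beta<c\g$ and a stable node when $\beta\ge c\g$, with $\g=2\sqrt{k/c}$. The extreme choices $\beta=\psim c$ and $\beta=\psiM c$ fall into the three regimes of the theorem depending on how $\psim,\psiM$ compare with $\g$; the quantities $\hat z$, $\tilde z$ in \eqref{eq:z} are the normalized oscillation frequencies of the two extreme systems, and the linear amplitude decay over a half-period equals $e^{-\pi/\hat z}$ and $e^{-\pi/\tilde z}$, respectively. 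I would take $\partial\Sigma_i$ to be a closed curve assembled from arcs of trajectories of the two extreme systems. On an arc of the $\beta=\psiM c$ system, the true vector field differs from the auxiliary one by $(G,\rho)(\phi-\psiM c)$, a sign-definite perturbation; orienting the arc appropriately forces the true trajectory to cross only inward. An analogous argument covers $\beta=\psim c$-arcs. In the strong-alignment case both extreme systems are non-oscillatory, the arcs are monotone, and the gluing closes without any admissibility hypothesis.

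The main obstacle lies in the weak and medium regimes, where at least one extreme system is a stable spiral and the two centers $(\psim c,c)$, $(\psiM c,c)$ are distinct, so no single Lyapunov function orders the two flows. The closed curve $\partial\Sigma_i$ must be produced by tracing a spiral arc about one center, splicing to an arc about the other, and returning to the starting point. The horizontal offset of centers is $c(\psiM-\psim)$, while the spliced arcs contribute multiplicative amplitude factors built from $e^{-\pi/\hat z}$ and $e^{-\pi/\tilde z}$ combined with partial-period corrections of the form $e^{\tan^{-1}\hat z/\hat z}$ arising from the rotation needed to match one auxiliary arc to another. Balancing these contributions against $\psiM-\psim$ yields precisely the admissibility inequalities \eqref{eq:weakacc} and \eqref{eq:medacc}. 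Once these hold, the regions $\Sigma_1,\Sigma_2,\Sigma_3$ stated in the theorem are forward invariant, the a priori bounds on $(G,\rho)$ follow, and Theorem \ref{local} delivers the global smooth solution.
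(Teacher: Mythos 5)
Your high-level architecture matches the paper: pass to characteristics, freeze the nonlocal term, build a forward-invariant region whose boundary is assembled from arcs of the two extremal frozen systems, and derive the admissibility conditions as the closure requirement. However, there is a genuine gap in the execution.

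The paper does not build $\Sigma_i$ directly in $(G,\rho)$ coordinates. The key technical device, which your proposal omits, is the change of variables $w = G/\rho$, $s = 1/\rho$, which converts the characteristic system into
\begin{align*}
w' &= k - kcs,\\
s' &= w - s\,(\psi\ast\rho).
\end{align*}
With $\psi\ast\rho$ frozen to a constant $\beta$, this is a \emph{linear} system whose trajectories are exact logarithmic spirals about $(\beta/c,1/c)$; the paper solves it explicitly and reads off the hitting points $p_1^{we}$, $p_2^{we}$, $p_3^{we}$ that determine the boundary arcs $C_1,C_2,C_3$ and the admissibility conditions \eqref{eq:weakacc}, \eqref{eq:medacc}. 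By contrast, your frozen system in $(G,\rho)$ is genuinely nonlinear ($\dot G$ contains $G^2$ and $G\rho$); its trajectories are not logarithmic spirals and the factors $e^{-\pi/\hat z}$, $e^{-\pi/\tilde z}$, $e^{\tan^{-1}\hat z/\hat z}$ you invoke are properties of the \emph{linearization at the equilibrium}, not of the finite arcs you need for the boundary. Without identifying the linearizing transformation, the claimed return-map factors are unjustified for the actual curves.

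A second, related issue concerns the comparison argument. In $(w,s)$ the discrepancy between the true field and the frozen-$\betaM$ (resp. $\betam$) field is the vector $(0,\,s(\betaM-\psi\ast\rho))$ (resp. $(0,\,s(\betam-\psi\ast\rho))$), pointing in a single fixed direction along each arc; the paper exploits this by parameterizing trajectories by $p$ and running a clean Gronwall contradiction (Proposition \ref{propmainwal}). Your perturbation $(\phi-\psiM c)(G,\rho)$ is radial, and showing it is always inward on your arcs requires extra geometric input that the proposal does not supply. Also, the boundary arcs are \emph{tangent} to one of the frozen fields, so a bald appeal to "invariance under every frozen-$\phi$ field" does not deliver the invariance of $\Sigma_i$ under the time-dependent flow; the paper's per-segment contradiction argument handles the tangency explicitly. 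To make your proposal rigorous you would essentially be forced to rediscover the $(w,s)$ transformation.
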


\begin{remark}
  The subcritical regions $\Sigma_1, \Sigma_2$ are
  illustrated in Figure \ref{mainfig}. The shape of $\Sigma_3$ is
  similar to $\Sigma_1$.
  We would like to point out that the steady-state solution
  $(G,\rho) = ( c\|\psi\|_{L^1} ,c)$ to \eqref{localexeqsys} is
  included in the subcritical regions $\Sigma_1$, $\Sigma_2$ and
  $\Sigma_3$. This corresponds to the steady-state solution
  $\rho(x)\equiv c$ and $u(x)\equiv\bar{u}$ to \eqref{mainsys}.
  Therefore, our subcritical regions are non-empty, and contains a large class of
  physically meaningful initial data, including the states around a
  steady state.
\end{remark}

\begin{remark}\label{rem:acc}
  When $\psi(x)\equiv\psi$ is a constant, the alignment force becomes
  a local and linear damping. Our constructed invariant regions agree
  with the sharp subcritical threshold obtained in \cite{BL19}.
  The admissible conditions \eqref{eq:weakacc} and \eqref{eq:medacc}
  automatically hold. For general $\psi$, the admissible conditions
  ensures the nonlocality is not too strong, and the invariant regions
  are non-trivial.
\end{remark}

\begin{theorem}[Finite time breakdown]
\label{ftb1}
Under the same assumptions as Theorem \ref{gs1}, we have
\begin{enumerate}
    \item
      \emph{Weak alignment} ($\psiM<\g$):
      If there exists $x_0\in\mathbb{T}$ that lie in the supercritical
      region $\Delta_1$, namely 
      $$
      \big(G_0(x_0),\rho_0(x_0)\big)\in\Delta_1,
      $$
      then $(G,\rho)$ becomes unbounded at a finite time.
    \item
      \emph{Strong and medium alignment} ($\psiM\geq\g$):
      If there exists $x_0\in\mathbb{T}$ that lie in the supercritical
      region $\Delta_2$, namely 
    $$
        \big(G_0(x_0),\rho_0(x_0)\big)\in\Delta_2,
    $$
    then $(G,\rho)$ becomes unbounded at a finite time.
\end{enumerate}
Moreover, at the blowup time $t_c$ and location $x_c$, the solution
generate a singular shock, with
\[\lim_{t\to t_c^-}  \rho(t,x_c) = \infty \,\,\text{or}\,\,0,\quad
  \lim_{t\to t_c^-} G(t,x_c) = -\infty,\quad
  \lim_{t\to t_c^-}  u_x(t,x_c ) = -\infty.\]
The regions $\Delta_1, \Delta_2$ are defined in \eqref{ftbinvregtransf}, \eqref{ftbinvregtransf2} respectively.
\end{theorem}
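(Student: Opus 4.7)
The plan is to reduce the PDE blowup question to an ODE comparison in the phase plane of $(\rho, G)$ along characteristics. Differentiating along $\dot x = u(t, x(t))$ and using \eqref{localexeqsys1}--\eqref{localexeqsys3} yields the non-autonomous system
\begin{align*}
\rho' &= \rho\,(q - G), \\
G'   &= G\,(q - G) + k(\rho - c),
\end{align*}
where $q(t) := (\psi*\rho)(t, x(t))$ satisfies the pointwise bound $\psim c \le q(t) \le \psiM c$ by \eqref{eq:psibounded} and conservation of mass. This two-sided bound lets me treat $q$ as an unknown but controlled coefficient, and the right-hand side is affine in $q$.

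For each frozen $q_* \in [\psim c, \psiM c]$, the autonomous comparison system has the unique non-vacuous equilibrium $(G,\rho) = (q_*, c)$ with Jacobian eigenvalues $\lambda = \tfrac12\bigl(-q_* \pm \sqrt{q_*^2 - \g^2 c^2}\,\bigr)$. In the weak-alignment regime $\psiM<\g$ every such system is a stable spiral, so the only way to force blowup is to start outside every bounded orbit; in the regime $\psiM\ge\g$, at least the extremal choice $q_* = \psiM c$ produces a stable node whose phase portrait contains a non-rotating escape direction toward $G\to-\infty$. Guided by this dichotomy, I would take $\Delta_1$ (resp.\ $\Delta_2$) to be the exterior of the largest bounded trajectory of the \emph{most favorable} comparison system, namely the one that tries hardest to keep solutions inside the subcritical region $\Sigma_i$ of Theorem \ref{gs1}.

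The core technical step is a barrier argument. At each point on the candidate boundary of $\Delta_i$, the actual velocity field $(\rho', G')$ is an affine function of $q$ and therefore a convex combination of the two extremal vector fields corresponding to $q = \psim c$ and $q = \psiM c$. If the boundary is assembled so that both extremal vector fields point into $\Delta_i$, then so does every convex combination, and the true characteristic trajectory is confined to $\Delta_i$ for all $t<t_c$. Once trapped with $G$ sufficiently negative, the Riccati term $-G^2$ dominates $G'$, and standard Riccati comparison yields a finite blowup time $t_c<\infty$. The blowup rates then follow: $u_x = G - q$ with bounded $q$ forces $u_x(t,x_c)\to -\infty$, and integrating $(\log\rho)' = -u_x$ along the characteristic yields $\rho(t,x_c)\to\infty$, with the symmetric arm of $\Delta_i$ producing the alternative $\rho\to 0$.

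The main obstacle I anticipate is verifying this uniform outward-pointing property along the full boundary of $\Delta_i$. Because each boundary is assembled piecewise from orbits of different extremal comparison systems, one must check compatibility at the gluing points, in the sense that at each corner both extremal vector fields still cross into $\Delta_i$. This is especially delicate in the weak-alignment case $\psiM<\g$, where the comparison flow spirals and the ``outside'' is not convex; the qualitative change as $\g$ crosses $\psiM$ is precisely what produces two different supercritical regions $\Delta_1$ and $\Delta_2$ in the statement.
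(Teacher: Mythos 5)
Your plan captures the big-picture strategy (comparison with extremal frozen-coefficient systems, a barrier argument to trap trajectories in $\Delta_i$, and a Riccati mechanism to force $G\to-\infty$), and your observation that the velocity field is affine in $q=\psi\ast\rho$ so it suffices to check the two extremal vector fields is a clean way to phrase what the paper's Gronwall estimates are implicitly doing. But there is a genuine gap that prevents the construction from getting off the ground as written: in $(G,\rho)$ coordinates, the frozen-$q_\ast$ comparison system is \emph{quadratic}, not linear, and its orbits have no usable explicit form. The paper's essential trick is the change of variables $(w,s)=(G/\rho,1/\rho)$, under which \eqref{Grhosys} becomes \eqref{mainodesys} and each frozen comparison system becomes the \emph{linear} system \eqref{auxsys}. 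Only then are the orbit segments (the curves $B_1,B_2,B_3$) explicitly solvable spirals or node trajectories that can be glued, and only then can the outward-pointing / non-crossing property along each arc be checked via the trajectory-slope comparison and Gronwall, as in Propositions \ref{propmainwal} and \ref{ftbpropmainwal}. Without this transformation your boundary curves have no description, and the ``check both extremal fields point inward'' step cannot be executed.

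A second gap: confinement in $\Delta_i$ does not by itself place the trajectory in a regime where $-G^2$ dominates. You write ``once trapped with $G$ sufficiently negative,'' but the barrier alone does not drive $G$ there; indeed $\Delta_1$ includes data with $G_0>0$ (all of the line $\rho=0$ is in $\Delta_1$). The paper closes this by working in the $(w,s)$ plane: once trapped in $\Delta_1^\ast$, the signs of $w'$ and $s'$ force $s$ monotonically down to $0$ in finite time with $w<0$, which \emph{is} the blowup ($\rho=1/s\to\infty$, $G=w\rho\to-\infty$). The Riccati lemma (Lemma \ref{rho0points}) is used only for the degenerate case $\rho_0=0$, where $\rho\equiv 0$ and $G$ satisfies a scalar Riccati equation with nonlocal coefficient $\psi\ast\rho$ bounded and $4kc>\sup(\psi\ast\rho)^2$. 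Relatedly, the ``$\rho\to 0$'' alternative in the theorem is not a ``symmetric arm'' of $\Delta_i$; it is precisely this $\rho_0=0$ case (vacuum is preserved along characteristics), which is handled separately from the $\rho_0>0$ phase-plane argument.
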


\begin{figure}[ht!] 
\centering
\subfigure[Weak alignment ($\lambda = \sqrt{2}, \psiM = 0.75,\psim = 0.25 $)]{\label{subcrGrhoweakinitial} \includegraphics[width=0.4\linewidth]{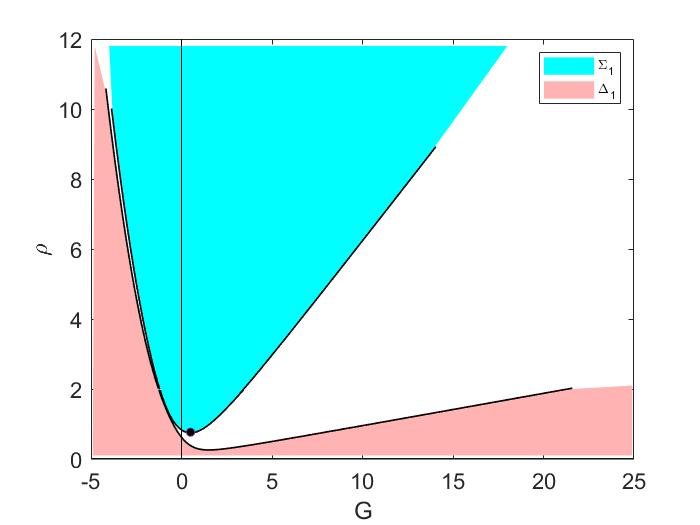}}%
\qquad
\subfigure[Strong alignment ($\lambda = \sqrt{2}, \psiM = 2,\psim = 1.5 $)]{\label{subcrGrhostronginitial}\includegraphics[width=0.4\linewidth]{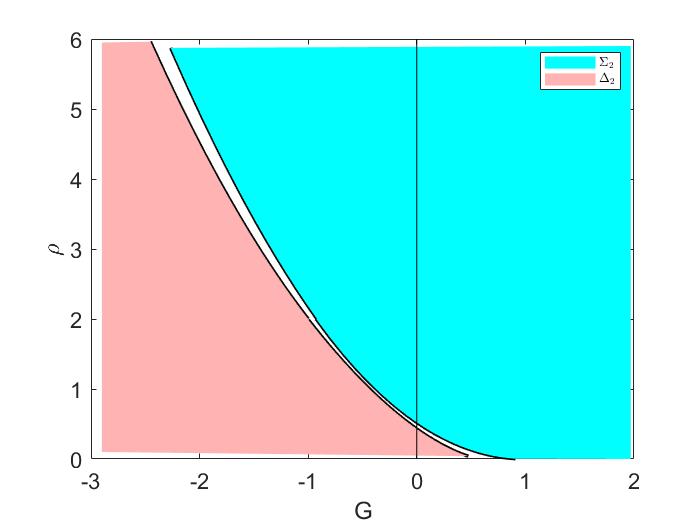}}
\caption{Shapes of $\Sigma_1,\Sigma_2,\Delta_1,\Delta_2$.}
\label{mainfig}
\end{figure}

\if 0
\begin{remark} When the momentum equation is augmented with an additional damping term, i.e., 
$$
u_t + u u_x +\nu u  = -k\phi_x + \psi\ast(\rho u) -  u\psi \ast \rho, 
$$
a similar result can still be obtained. 
Indeed, in our analysis we only need to set $w:= \frac{ u_x  +\nu+  \psi\ast\rho}{\rho}$ and $s:=\frac{1}{\rho}$ to get,
\begin{align*}
& w' = k - kcs,\\
& s' = w - s (\nu + \psi\ast\rho) . 
\end{align*}
In the result (Theorems \ref{gs1} and \ref{ftb1}), the bounds for $\psi$ need to be replaced by the same bounds added to $\frac{\nu}{M}$, 
i.e., $\min\psi, \max\psi$ should be replaced by $\min\psi +\nu/M, \max\psi + \nu/M$ respectively.
\end{remark}
\fi

Our second main result concerns the EPA system with weakly singular
alignment influence \eqref{eq:psiweaksing}. Although one would expect
a similar critical threshold phenomenon for the global behaviors of
the solutions, the lack of boundedness on $\psi$ would yield a lack of
apriori control on $\psi\ast\rho$, resulting a different subcritical
region for global smooth solutions.

\begin{theorem}[On weakly singular alignment force]  \label{l1gs}
Consider \eqref{localexeqsys} with repulsive electric force $k>0$ and
weakly singular alignment influence $\psi$ satisfying \eqref{eq:psiweaksing}.
Suppose the initial data $(G_0,\rho_0)$ satisfies \eqref{eq:initHs}. Then
\begin{enumerate}
    \item \emph{Weak alignment} ($\|\psi\|_{L^1}-\gamma  < \tfrac{\g}{2}$): under the admissible condition
    \begin{equation}\label{eq:wscond}
  4(\|\psi\|_{L^1} - 2\gamma) < \frac{e^{\frac{\tan^{-1}\hat z}{\hat z}}\left(1 - e^{ -\frac{\pi}{\tilde z} - \frac{\pi}{\hat z} }\right) }{2\left(1 + e^{-\frac{\pi}{\tilde z}}\right)}\g,
  \end{equation}
  if the initial data lie in the subcritical region $\SigmaL^1$,
namely
\[
 \big(G_0(x),\rho_0(x)\big)\in\SigmaL^1, \quad \forall\,x\in\mathbb{T},
\]
then $(G,\rho)$ remain bounded in all time.
      \item \emph{Strong alignment} ($\gamma  \geq \tfrac{\g}{2}$): if the initial data lie in the subcritical region $\SigmaL^2$, namely
    $$
   \big(G_0(x),\rho_0(x)\big)\in\SigmaL^2, \quad \forall\,x\in\mathbb{T},
    $$
    then $(G, \rho)$ remain bounded in all time.
    \item
    \emph{Medium alignment} ($\gamma<\frac{\g}{2}\le ||\psi||_1-\gamma$):
    under the admissible condition 
    \begin{equation}\label{eq:wsmedacc}
   4(||\psi||_{L^1}-2\gamma) < \frac{e^{\frac{\tan^{-1}\hat z}{\hat z}}}{2\left(1 + e^{-\frac{\pi}{\tilde z}} \right)}\g,
    \end{equation}
    if the initial data lie in the subcritical region $\SigmaL^3$, namely
    $$
   \big(G_0(x),\rho_0(x)\big)\in\SigmaL^3, \quad \forall\, x\in\mathbb{T},
    $$
    then $(G, \rho)$ remain bounded in all time.
\end{enumerate}
Consequently, \eqref{localexeqsys} has a global smooth solution.
Here, $\gamma = \int_{1/2}^1\psi^*(x)\, dx$, where $\psi^* :
(0,1]\to\mathbb{R}$ is the decreasing rearrangement of $\psi$ on
$\mathbb{T}$.
The parameters $\hat z$ and $\tilde z$ are defined as
\begin{equation}\label{eq:zl1}
  \hat z := \sqrt{\left(\frac{\g}{2(\|\psi\|_{L^1}-\gamma)}\right)^2-1}
  \quad\text{and}\quad
  \tilde z := \sqrt{\left(\frac{\g}{2\gamma}\right)^2-1}.
\end{equation}
The regions $\SigmaL^1,\SigmaL^2$ and $\SigmaL^3$ are subsets of $\mathbb{R}\times\mathbb{R}_+$ defined in \eqref{l1subcr}, \eqref{l1subcr2} and \eqref{l1subcr3} respectively.
\end{theorem}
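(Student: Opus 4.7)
The plan is to reduce the weakly singular case to the invariant-region framework of Theorem \ref{gs1}, with the pointwise estimate $\psim c \le \psi\ast\rho \le \psiM c$ from \eqref{eq:psirho} replaced by a refined estimate of the form $2\gamma c \le \psi\ast\rho \le 2(\|\psi\|_{L^1}-\gamma)c$ that holds only when the density itself enjoys a two-sided bound $\rhom \le \rho \le \rhoM$. This forces a bootstrap loop: the invariant region in the $(G,\rho)$-phase plane must simultaneously keep $\rho$ inside $[\rhom,\rhoM]$, while it is precisely this range of $\rho$ that validates the refined nonlocal bound and hence makes the region invariant in the first place. That the quantities $2\gamma$ and $2(\|\psi\|_{L^1}-\gamma)$ are the natural substitutes for $\psim$ and $\psiM$ is already visible in the definitions \eqref{eq:zl1} of $\hat z$ and $\tilde z$, and the three-case split (weak/strong/medium alignment) is inherited verbatim.

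I would begin by writing the characteristic-ODE form of \eqref{localexeqsys}: along $\dot{x}=u$,
\begin{align*}
G' &= k(\rho-c) - G\bigl(G-\psi\ast\rho\bigr),\\
\rho' &= -\rho\bigl(G-\psi\ast\rho\bigr),
\end{align*}
so that the only nonlocal coefficient entering the planar flow is $\eta(t):=(\psi\ast\rho)(t,x(t))$. Section \ref{analysis1} already supplies invariant regions for this planar system under the sole hypothesis that $\eta$ is pinned in an interval of the form $[Ac,Bc]$; its construction is the piecewise gluing of trajectories of the two extremal autonomous systems corresponding to $\eta\equiv Ac$ and $\eta\equiv Bc$. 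The whole point of the weakly singular analysis is to feed this machinery with the right $A,B$.

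Next, I would invoke Lemma \ref{psistarnewboundslem}, which uses the conservation $\int_\mathbb{T}\rho\,dx=c$ together with the decreasing rearrangement $\psi^*$ (entering through $\gamma=\int_{1/2}^{1}\psi^*$) to show that, as long as $\rhom\le \rho(t,\cdot)\le \rhoM$ with $\rhom,\rhoM$ chosen compatibly with $c$, one has
\[
2\gamma c \;\le\; (\psi\ast\rho)(t,x) \;\le\; 2\bigl(\|\psi\|_{L^1}-\gamma\bigr)c \qquad\text{for all }(t,x).
\]
Granting these bounds, the three-case analysis of Theorem \ref{gs1} applies with $(\psim,\psiM)$ replaced by $(2\gamma,\,2(\|\psi\|_{L^1}-\gamma))$. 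The admissible conditions \eqref{eq:wscond}, \eqref{eq:wsmedacc} then arise from \eqref{eq:weakacc}, \eqref{eq:medacc} under this substitution, modulo an extra numerical factor absorbing the slack needed so that the $\rho$-projection of the candidate invariant region actually lies in $[\rhom,\rhoM]$. The subcritical regions $\SigmaL^1,\SigmaL^2,\SigmaL^3$ in \eqref{l1subcr}--\eqref{l1subcr3} are the same piecewise-glued regions as $\Sigma_1,\Sigma_2,\Sigma_3$, built from the new parameters $\hat z,\tilde z$ of \eqref{eq:zl1}. Global existence is then read off from the continuation criterion \eqref{BKM}: a standard continuity argument shows the local solution of Theorem \ref{local} cannot exit $\SigmaL^i$, so $G$ and $\rho$ remain bounded for all time.

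The main obstacle is closing the bootstrap. Lemma \ref{psistarnewboundslem} is only available when $\rho$ is trapped in $[\rhom,\rhoM]$, yet the constants $\rhom,\rhoM$ are not free parameters: they must be picked so that (i) Lemma \ref{psistarnewboundslem} yields exactly the constants $2\gamma$ and $2(\|\psi\|_{L^1}-\gamma)$ that feed the invariant-region computation, and (ii) the resulting invariant region from Section \ref{analysis1} keeps $\rho$ inside $[\rhom,\rhoM]$. Balancing these two requirements is what forces the enlarged numerical factor in \eqref{eq:wscond} and \eqref{eq:wsmedacc} relative to their bounded counterparts \eqref{eq:weakacc}, \eqref{eq:medacc}, and it is also what dictates the specific shape of $\SigmaL^i$. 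Verifying that the bootstrap actually closes for a non-empty class of initial data, in each of the three alignment regimes, is the technical heart of the proof.
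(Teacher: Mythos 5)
Your outline captures the correct strategy: reduce to the invariant-region machinery of Section \ref{analysis1} by replacing the a priori pointwise bounds on $\psi\ast\rho$ with the refined conservation-based bounds of Lemma \ref{psistarnewboundslem}, then close a bootstrap so that the invariant region simultaneously pins $\rho$ and validates the bounds that make it invariant. This matches the structure of the paper's proof, and your identification of $\gamma=\int_{1/2}^1\psi^*$ as the quantity replacing $\psi_{\min},\psi_{\max}$ (via the symmetric choice $\rho_{\min}=0$, $\rho_{\max}=2c$, forcing $\gamma_1=\gamma_2=\gamma$) is exactly how the paper proceeds.

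However, there is a genuine gap in the claim that ``$\SigmaL^1,\SigmaL^2,\SigmaL^3$ are the same piecewise-glued regions as $\Sigma_1,\Sigma_2,\Sigma_3$, built from the new parameters $\hat z,\tilde z$.'' That is false, and trying to run the argument with literally the same shapes would fail. The bounded-case region $\Sigma_1^\ast$ is enclosed by $C_1,C_2,C_3$ and the $p$-axis, i.e.\ it contains points with $q$ arbitrarily close to $0$, equivalently $\rho$ arbitrarily large. Such a region can never be contained in $\{\rho\le\rho_{\max}\}$ for any finite $\rho_{\max}$, so the hypothesis of Lemma \ref{psistarnewboundslem} cannot be bootstrapped with the original construction. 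The paper's fix — which you do not identify — is to move the starting point of the first boundary arc from the origin to $(\beta_{\max}/\rho_{\max},1/\rho_{\max})$ and to close the region with a new fourth boundary segment $C_4$ along the line $q=1/\rho_{\max}$. The choice $\hat p(0)=\beta_{\max}/\rho_{\max}$ is not arbitrary: it is precisely what makes the vector field point into the region along $C_4$, since for $(w,s)\in C_4$ one has $s'=w-s(\psi\ast\rho)>\beta_{\max}/\rho_{\max}-\beta_{\max}/\rho_{\max}=0$. This modified starting point is also the source of the factor $(1-c/\rho_{\max})$ in the closure condition \eqref{l1clocon2}; together with the factor $2$ from $\beta_{\max}-\beta_{\min}=2c(\|\psi\|_{L^1}-2\gamma)$, that is exactly where the factor $4$ in \eqref{eq:wscond} and \eqref{eq:wsmedacc} comes from. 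You gesture at an ``extra numerical factor absorbing the slack'' but attribute it vaguely; without changing the construction there is no such slack to exploit, so your sketch does not actually produce a region satisfying both requirements of the bootstrap. A secondary (but real) subtlety you gloss over is that the continuity argument must be run \emph{uniformly in $x$}: Lemma \ref{psistarnewboundslem} requires $\rho(t,\cdot)\le\rho_{\max}$ across the whole torus at a given time, not merely along one characteristic, so the ``first exit time'' must be taken over all characteristics simultaneously, as the paper does.
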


\begin{remark}
  Unlike the case when $\psi$ is bounded, the subcritical regions
  $\SigmaL^i$'s are subsets of $\{(G_0,\rho_0) :
  \rhom\le\rho_0\le\rhoM\}$ for appropriate choices of
  $0\le\rhom<c<\rhoM<\infty$. Figure \ref{l1subcrfig} illustrates the
  shape of $\SigmaL^1$ and $\SigmaL^2$.
  The steady-state solution $(G,\rho)=(c\|\psi\|_{L^1},c)\in\SigmaL^i$.
  Hence, the region $\SigmaL^i$ contain initial data around the steady state. 
\end{remark}

\begin{figure}[ht!] 
\centering
\subfigure[Weak alignment ($\lambda = 4, \|\psi\|_{L^1} = 2, \gamma=0.95$)]{\label{l1subcrfig} \includegraphics[width=0.4\linewidth]{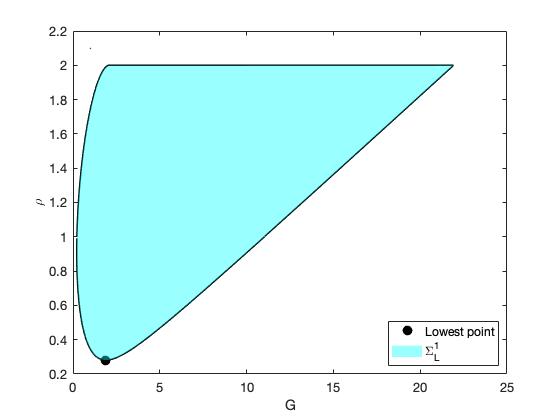}}%
\qquad
\subfigure[Strong alignment ($\lambda = \sqrt{2}, \|\psi\|_{L^1} = 2, \gamma=0.95 $)] {\label{wssalsubcr}\includegraphics[width=0.4\linewidth]{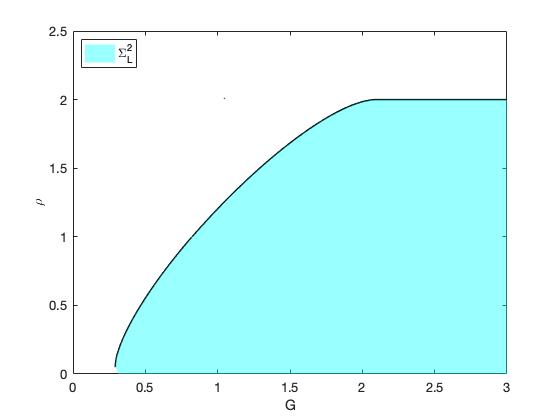}}
\caption{Shapes of $\SigmaL^1,\SigmaL^2$.}
\end{figure}

\begin{remark}
  The admissible conditions \eqref{eq:wscond} and \eqref{eq:wsmedacc} are similar to
  \eqref{eq:weakacc} and \eqref{eq:medacc} respectively. Since $\psi$ is unbounded, $\psiM-\psim$ is
  replaced by $4(\|\psi\|_{L^1}-2\gamma)$. Note that
  $\|\psi\|_{L^1}-2\gamma\ge0$, and the equality holds if and only if
  $\psi(x)\equiv\psi$ is a constant. Hence, just like the comment in
  Remark \ref{rem:acc}, the admissible condition says that the
  nonlocality is not too strong. The parameters $\hat z$ and $\tilde
  z$ are also revised to adapt the unboundedness of $\psi$.
\end{remark}

\section{The EPA system with bounded alignment influence}
\label{analysis1}
Consider the characteristic path $x(t)$ originated at $\alpha\in\mathbb{T}$
\begin{equation}
\label{chpath}
\frac{dx}{dt} = u(t,x(t)),\quad x(0) = \alpha.
\end{equation}
From \eqref{localexeqsys1} and \eqref{localexeqsys2}, we obtain the system
\begin{subequations}
\label{Grhosys}
\begin{align}
& G' = -G(G -\psi\ast\rho) + k(\rho -c),  \label{Geq}\\
& \rho' = -\rho (G -\psi\ast\rho), \label{rhoeq}
\end{align}
\end{subequations}
with initial data $G(0) =G_0(\alpha)$ and $\rho(0)= \rho_0(\alpha)$.
Here $'$ denotes the derivative along the characteristic path
\[f'(t)=\frac{d}{dt}f(t,x(t))=f_t(t,x(t))+u(t,x(t)) f_x(t,x(t)).\]

In the proofs of Theorems \ref{gs1} and \ref{ftb1}, we will justify
that the initial data when $\rho(0)=0$ can be handled separately. For
now, we assume that $\rho(0)>0$.  We can further apply the transformation
\begin{equation}
\label{vartransf}
w:= \frac{G}{\rho},\qquad s:=\frac{1}{\rho}
\end{equation}
to \eqref{Grhosys} and obtain the dynamics
\begin{subequations}
\label{mainodesys}
\begin{align}
& w' = k - kcs, \label{mainodesysa}\\
& s' = w - s (\psi\ast\rho) . \label{mainodesysb}
\end{align}
\end{subequations}
This ODE system is not closed along each characteristic path due to
the nonlocal nature of the term $\psi*\rho$. We shall analyze this nonlocal system by establishing a type of comparison argument.  To this end,  
we introduce a family of auxiliary systems
\begin{subequations}
\label{auxsys}
\begin{align}
& p' = k - kcq, \label{auxsysa}\\
& q' = p - \beta q, \label{auxsysb}
\end{align}
\end{subequations}
with $p=p(t;\beta), q=q(t;\beta)$, where $\beta$ is a  parameter.
For each given $\beta$, \eqref{auxsys} is a linear system that can be
solved explicitly. We can rewrite \eqref{auxsys} as
\[
\begin{bmatrix}
p - \frac{\beta}{c} \\
q - \frac{1}{c}
\end{bmatrix}' = 
\begin{bmatrix} 0 & -kc\\
1 & -\beta
\end{bmatrix}
\begin{bmatrix}
p - \frac{\beta}{c} \\
q - \frac{1}{c}
\end{bmatrix},
\]
where the coefficient matrix has two eigenvalues 
\[\frac{-\beta \pm \sqrt{\beta^2-4kc}}{2}.\]
Note that $\psi\ast\rho$ has apriori bounds \eqref{eq:psibounded}, which
we recall here: 
$\betam\leq \psi\ast\rho\leq \betaM$, where we denote
\begin{equation}\label{eq:betaMm}
  \betaM=c\psiM,\quad\betam=c\psim.
\end{equation}
It is natural to consider the following two particular auxiliary systems with $\beta =
\betaM$ and $\betam$:
\begin{subequations}
\label{mumMaux}
\begin{align}
& \begin{bmatrix}
\hat p - \frac{\betaM}{c}\\
\hat q - \frac{1}{c}
\end{bmatrix}' = 
\begin{bmatrix}
0 & -kc\\
1 & -\betaM
\end{bmatrix}
\begin{bmatrix}
\hat p - \frac{\betaM}{c}\\
\hat q - \frac{1}{c}
\end{bmatrix}, \label{muMaux}\\
& \begin{bmatrix}
\tilde p - \frac{\betam}{c} \\
\tilde q - \frac{1}{c}
\end{bmatrix}' = 
\begin{bmatrix}
0 & -kc\\
1 & -\betam
\end{bmatrix}
\begin{bmatrix}
\tilde p - \frac{\betam}{c}\\
\tilde q - \frac{1}{c}
\end{bmatrix}. \label{mumaux}
\end{align}
\end{subequations}

We would like to remark that there is no direct comparison principle between the
solutions to the nonlocal system $(w(t), s(t))$ and the local auxiliary
system $(\hat p(t), \hat q(t))$ or $(\tilde p(t), \tilde q(t))$,
particularly when $\beta$ is small, in which case the eigenvalues are
not real, and the solutions are oscillatory.
Instead, we shall obtain a comparison in the phase plane, and obtain
an \emph{invariant region} that the trajectory $(w,s)$ cannot exit.

\subsection{Weak alignment}
\label{weakal}
We begin with the case where all admissible values of $\beta \in [\betam,\betaM]$ are such that 
$$
\beta^2 <4kc,
$$
and in such case $(\beta/c,1/c)$ is an asymptotically stable spiral point.
Physically, this places a restriction on the upper bound of $\psi\ast\rho$. 
Hence, we call this scenario the weak alignment case. 
We will construct an invariant region using specific trajectories of the above auxiliary systems, see Figure \ref{fig1}. At this point, we establish some notation to be used in this section,
$$
\tilde \theta: = \frac12\sqrt{4kc-\betam^2}, \quad
\hat \theta:= \frac12\sqrt{4kc-\betaM^2}.
$$

\begin{figure}[ht!] 
\centering
\subfigure{\includegraphics[width=0.5\linewidth]{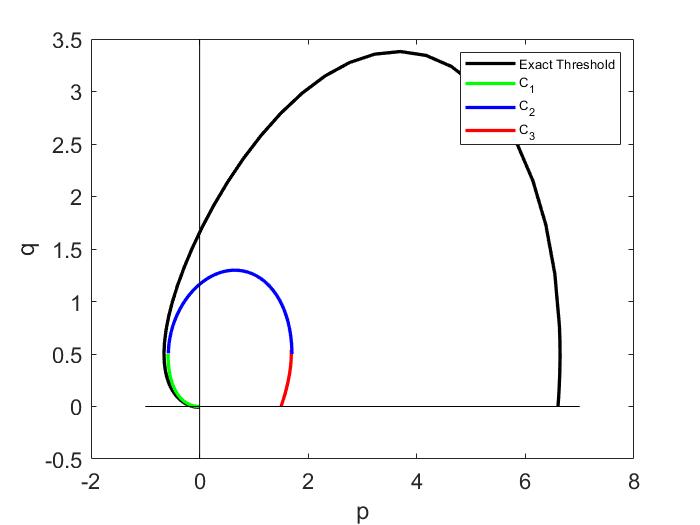}}
\caption{Invariant region.}
\label{fig1}
\end{figure}
We will now construct the invariant region ($\Sigma_1^\ast$) as in Figure \ref{fig1}. We divide this construction into three steps, each pertaining to one of the segments of the boundary of $\Sigma_1^\ast$. We will start from the origin and move backwards in time. 

{\bf Step 1:} The first segment of the curve is the trajectory to \eqref{muMaux} starting at the origin, going into the second quadrant, and ending when it hits the line $q=1/c$ while going backwards in time. Hence, if we solve for $\hat p,\hat q$ with $\hat p(0)=0, \hat q(0)=0$, then the other end point of the curve is $(\hat p(t_1^{we}),\hat q (t_1^{we}))$, where $t_1^{we}$ is the first negative time for which $\hat q(t_1^{we} ) = 1/c$. Let $p_1^{we} := \hat p(t_1^{we})$. Since \eqref{muMaux} is a simple linear system, we can explicitly solve for its solution with initial data $(\hat p(0),\hat q(0))=(0,0)$,
\begin{align*}
& \hat p(t) = \frac{\betaM}{c} + e^{-\frac{\betaM t}{2}}\left( -\frac{\betaM}{c} \cos\hat\theta t + \frac{k-\frac{\betaM^2}{2c}}{\hat \theta}\sin\hat\theta t \right),\\
& \hat q(t) = \frac{1}{c} - \frac{e^{-\frac{\betaM t}{2}}}{c}\left( \cos\hat\theta t + \frac{\betaM}{2\hat\theta}\sin\hat\theta t \right).
\end{align*}
$$
\hat q(t_1^{we})=\frac1c\implies t_1^{we} = -\frac{1}{\hat\theta} \tan^{-1}\left(\frac{2\hat\theta}{\betaM}\right).
$$
Hence,
\begin{align}
p_1^{we} & = \frac{\betaM}{c} - \sqrt{\frac{k}{c}} e^{\frac{\betaM}{2\hat\theta}\tan^{-1}\left( \frac{2\hat\theta}{\betaM} \right)} \label{p1ast}\\
& = \frac{\betaM}{c} - \sqrt{\frac{k}{c}} e^{\frac{\tan^{-1}( \hat z )}{\hat z}}, \nonumber
\end{align}
with $\hat z$ as defined at the end of Section \ref{intro}.
\begin{lemma}
\label{p1astbound}
$p_1^{we}\in \sqrt{\frac{k}{c}}\,( -1, 2-e )$.
\end{lemma}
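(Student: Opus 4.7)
The plan is to reduce $p_1^{we}$ to a function of the single dimensionless parameter $\hat z$, compute the two boundary limits, and close the argument with strict monotonicity.

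First, using $\sqrt{k/c}=\g/2$ and $\betaM = c\psiM$ together with the identity $\g/\psiM = \sqrt{1+\hat z^2}$ that follows from \eqref{eq:z} in the weak-alignment regime $\psiM<\g$, I would rewrite \eqref{p1ast} as
\[
\frac{p_1^{we}}{\sqrt{k/c}} \;=\; \phi(\hat z)\;:=\;\frac{2}{\sqrt{1+\hat z^2}} - e^{\tan^{-1}(\hat z)/\hat z}.
\]
Since $\psiM<\g$ is equivalent to $\hat z\in(0,\infty)$, the lemma reduces to the claim $\phi(\hat z)\in(-1,\,2-e)$ on this open interval.

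Next, I would compute the endpoint limits. As $\hat z\to 0^+$, $\tan^{-1}(\hat z)/\hat z\to 1$ and $1/\sqrt{1+\hat z^2}\to 1$, so $\phi(\hat z)\to 2-e$; as $\hat z\to\infty$, $\tan^{-1}(\hat z)/\hat z\to 0$ and $1/\sqrt{1+\hat z^2}\to 0$, so $\phi(\hat z)\to -1$. These match the two advertised endpoints exactly, so it only remains to show they are never attained on $(0,\infty)$.

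I would then establish strict monotonicity $\phi'(\hat z)<0$ on $(0,\infty)$. The substitution $\hat z=\tan\theta$, $\theta\in(0,\pi/2)$, recasts $\phi$ as $\psi(\theta)=2\cos\theta-e^{\theta\cot\theta}$. Using $\frac{d}{d\theta}(\theta\cot\theta)=(\sin\theta\cos\theta-\theta)/\sin^2\theta$ one obtains
\[
\psi'(\theta)\;=\;-2\sin\theta+e^{\theta\cot\theta}\,\frac{\theta-\sin\theta\cos\theta}{\sin^2\theta},
\]
so $\psi'(\theta)<0$ reduces to the pointwise trigonometric inequality
\[
e^{\theta\cot\theta}\,\bigl(\theta-\sin\theta\cos\theta\bigr)\;<\;2\sin^3\theta,\qquad\theta\in(0,\pi/2).
\]
Together with continuity of $\phi$ and the two boundary limits, this inequality yields $\phi(\hat z)\in(-1,2-e)$ on the entire open interval, which is the conclusion of the lemma.

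The main obstacle is this last trigonometric inequality, which is sharp at $\theta=0$: Taylor expansion gives $\theta-\sin\theta\cos\theta\sim 2\theta^3/3$, $2\sin^3\theta\sim 2\theta^3$, and $e^{\theta\cot\theta}\to e<3$, so the ratio of the two sides tends to $e/3<1$ only by a thin margin. My plan is to introduce the auxiliary function
\[
F(\theta)\;:=\;2\sin^3\theta-e^{\theta\cot\theta}(\theta-\sin\theta\cos\theta),
\]
verify $F(0)=0$, $F(\pi/2)=2-\pi/2>0$, and establish $F(\theta)>0$ on $(0,\pi/2)$ through the combination of a Taylor-remainder argument near $\theta=0$ (which yields the leading behaviour $F(\theta)\sim\frac{2(3-e)}{3}\theta^3>0$) and a direct derivative sign analysis bounding $e^{\theta\cot\theta}$ away from $e$ for moderate $\theta$. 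Once this delicate estimate is in place, monotonicity of $\phi$ and the boundary limits deliver the stated range of $p_1^{we}$ immediately.
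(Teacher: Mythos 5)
Your reformulation in terms of $\hat z$ is equivalent to the paper's parameterization in $\tau=\sqrt{kc}/\betaM$ (via $\hat z=\sqrt{4\tau^2-1}$), and the two boundary limits $2-e$ (as $\hat z\to 0^+$) and $-1$ (as $\hat z\to\infty$) are computed correctly; the paper follows exactly this route and then simply asserts that its function $f$ is monotonically decreasing without giving details. Your computation of $\psi'(\theta)$ after the substitution $\hat z=\tan\theta$ and the equivalence
\[
\psi'(\theta)<0\quad\Longleftrightarrow\quad e^{\theta\cot\theta}\bigl(\theta-\sin\theta\cos\theta\bigr)<2\sin^3\theta,\qquad \theta\in(0,\tfrac{\pi}{2}),
\]
are also correct, so you have correctly reduced the lemma to this single trigonometric inequality.

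The genuine gap is that you do not prove it. The sketched Taylor-remainder argument near $\theta=0$ plus a ``direct derivative sign analysis bounding $e^{\theta\cot\theta}$ away from $e$ for moderate $\theta$'' is only a plan, and its simplest implementation fails: the crude bound $e^{\theta\cot\theta}\le e$ gives $e(\theta-\sin\theta\cos\theta)<2\sin^3\theta$, which is false near $\theta=\pi/2$ (the left side tends to $e\pi/2\approx 4.27$ while the right tends to $2$), so the decay of $e^{\theta\cot\theta}$ from $e$ to $1$ must enter quantitatively and your outline does not say how to stitch the near-zero and moderate-$\theta$ regimes. A cleaner way to close the step: set $R(\theta)=e^{\theta\cot\theta}(\theta-\sin\theta\cos\theta)/(2\sin^3\theta)$ and show $R$ is strictly decreasing. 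After clearing denominators, $(\ln R)'<0$ is equivalent to $2\sin^2\theta<\theta^2+\theta\sin\theta\cos\theta$, which in the variable $u=2\theta\in(0,\pi)$ reads $G(u):=\tfrac{u^2}{4}-1+\tfrac{u}{4}\sin u+\cos u>0$. One checks $G(0)=G'(0)=G''(0)=G'''(0)=0$ while $G''''(u)=\tfrac{u}{4}\sin u>0$ on $(0,\pi)$, so $G>0$ by iterated integration. Hence $R$ is strictly decreasing from $\lim_{\theta\to0^+}R=e/3<1$, giving $\psi'<0$ on $(0,\pi/2)$ and completing the lemma. Until a step like this is supplied, the proposal establishes the two limits but not the strict monotonicity that the lemma actually requires.
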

\begin{proof}
We write the expression for $p_1^{we}$ as,
$$
f(\tau) = \sqrt{\frac{k}{c}}\left( \frac{1}{\tau} - e^{\frac{\tan^{-1}\sqrt{4\tau^2-1}}{\sqrt{4\tau^2 - 1}}} \right),\quad \tau=\frac{\sqrt{kc}}{\betaM}>\frac{1}{2}.
$$
One can evaluate that $f$ is a monotonically decreasing function with
\[\lim_{\tau\to \frac12^+}f(\tau) = (2-e)\sqrt{\frac kc}\quad\text{and}\quad
  \lim_{\tau\to\infty}f(\tau) = -\sqrt{\frac kc}.\]
Hence, the result holds.
\end{proof}

Also note that $\hat p'>0$ and $\hat q'<0$ in this region. Hence, the first segment is given by,
\begin{align}
\label{firstseg}
& C_1 = \{ (p,q): (\hat p(t),\hat q(t)), t\in [t_1^{we} , 0]\} .
\end{align}

{\bf Step 2:} The second segment is constructed using the trajectory of the system \eqref{mumaux}. To have a closed region, the starting point of this segment should be the endpoint of the first segment. To this end, let $\tilde p,\tilde q$ be solutions to IVP \eqref{mumaux} with $\tilde p(0) = p_1^{we}, \tilde q(0) = 1/c$. This segment starts at $(p_1^{we},1/c)$, traces the trajectory of $(\tilde p,\tilde q)$ upwards and ends when it hits the $q=1/c$ line again in the first quadrant. We denote the end point as $(p_2^{we} ,1/c)$. In particular, $p_2^{we} = \tilde p(t_2^{we} )$ where $t^{we}_2$ is the first negative time where $\tilde q(t_2^{we}) = 1/c$. We have
\begin{align*}
& \tilde p(t) = \frac{\betam}{c}+ e^{-\frac{\betam t}{2}}\left[\left(p_1^{we} - \frac{\betam}{c}\right)\cos\tilde\theta t + \left( \frac{p_1^{we}\betam}{2\tilde\theta} - \frac{\betam^2}{2 c\tilde\theta} \right)\sin\tilde\theta t \right],\\
& \tilde q(t) = \frac{1}{c} + \frac{e^{-\frac{\betam t}{2}}}{\tilde\theta}\left(p_1^{we} - \frac{\betam}{c}\right)\sin\tilde\theta t .
\end{align*}
$$
\tilde q(t_2^{we}) = \frac1c\implies t_2^{we} = -\frac{\pi}{\tilde\theta}.
$$
Consequently,
\begin{align*}
p_2^{we} = \tilde p(t_2^{we}) & = \frac{\betam}{c} - e^{\frac{\betam\pi}{2\tilde\theta}}\left(p_1^{we} - \frac{\betam }{c}\right)\\
& = \frac{\betam}{c}\left( 1 + e^{\frac{\betam\pi}{2\tilde\theta}} \right) - p_1^{we} e^{\frac{\betam\pi}{2\tilde\theta}} \numberthis  \label{p2ast} \\
& = \frac{\betam}{c}\left( 1 + e^{\frac{\pi}{\tilde z}} \right) - p_1^{we} e^{\frac{\pi}{\tilde z}}.
\end{align*}
Here, we emphasize an important issue. We must have that $p_2^{we} > \betaM/c$, for otherwise we would not be able to obtain a closed invariant region. The following Lemma states a condition to ensure this. 
\begin{lemma}
\label{clocon1lem}
$p_2^{we}>\betaM/c$ if and only if 
\begin{align*}
& \sqrt{kc} e^{\frac{\tan^{-1}\hat z}{\hat z}} > (\betaM-\betam)\left( 1 + e^{-\frac{\pi}{\tilde z}} \right). 
\end{align*}
\end{lemma}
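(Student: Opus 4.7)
The proof is a direct algebraic manipulation using the explicit formulas already derived in the excerpt for $p_1^{we}$ (equation \eqref{p1ast}) and $p_2^{we}$ (equation \eqref{p2ast}). My plan is to write the condition $p_2^{we} > \betaM/c$ as an inequality purely in the parameters $\betam, \betaM, k, c, \tilde z, \hat z$, and then simplify.

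First I would start from
\[
p_2^{we} = \frac{\betam}{c}\left( 1 + e^{\frac{\pi}{\tilde z}} \right) - p_1^{we}\, e^{\frac{\pi}{\tilde z}},
\]
substitute $p_1^{we} = \frac{\betaM}{c} - \sqrt{\frac{k}{c}}\, e^{\frac{\tan^{-1}\hat z}{\hat z}}$, and impose $p_2^{we} > \betaM/c$. After distributing and moving the exponential terms to the left and the $\betaM/c$, $\betam/c$ terms to the right, the inequality becomes
\[
\sqrt{\frac{k}{c}}\, e^{\frac{\tan^{-1}\hat z}{\hat z}}\, e^{\frac{\pi}{\tilde z}} > \frac{\betaM-\betam}{c}\left(1 + e^{\frac{\pi}{\tilde z}}\right).
\]

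Then I would divide both sides by $e^{\pi/\tilde z}$ (which is positive) and multiply through by $c$, yielding the claimed equivalent form
\[
\sqrt{kc}\, e^{\frac{\tan^{-1}\hat z}{\hat z}} > (\betaM-\betam)\left(1 + e^{-\frac{\pi}{\tilde z}}\right).
\]
Since every step is a reversible equivalence (no inequalities are being relaxed), this also gives the converse direction, establishing the ``if and only if''.

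The only subtlety worth remarking on is that the expressions $e^{\pi/\tilde z}$ and $e^{\tan^{-1}(\hat z)/\hat z}$ are well-defined real numbers in the weak-alignment regime $\psiM < \g$, since both $\tilde z$ and $\hat z$ defined in \eqref{eq:z} are real and positive under $\betam \le \betaM < 2\sqrt{kc}$; no analytic continuation issues arise. I do not anticipate any real obstacle here: the lemma is a bookkeeping statement that translates a geometric closure condition on the invariant region into the explicit admissibility inequality \eqref{eq:weakacc}, and the proof reduces to one line of algebra once the formulas for $p_1^{we}$ and $p_2^{we}$ are in hand.
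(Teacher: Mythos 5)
Your proposal is correct and follows essentially the same direct substitution-and-rearrangement computation as the paper: plug the explicit formula for $p_1^{we}$ into that for $p_2^{we}$, compare with $\betaM/c$, and normalize by $e^{\pi/\tilde z}$ and $c$. The paper stops at the intermediate form $\sqrt{k/c}\,e^{\pi/\tilde z+\tan^{-1}\hat z/\hat z}-\frac{\betaM-\betam}{c}(1+e^{\pi/\tilde z})>0$ and observes it rewrites as the stated inequality, which is exactly your final division step.
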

\begin{proof}
From \eqref{p1ast} and \eqref{p2ast}, we have
\begin{align*}
 p_2^{we} - \frac{\betaM}{c} & = \frac{\betam}{c}\left( 1 + e^{\frac{\pi}{\tilde z}} \right) - p_1^{we} e^{\frac{\pi}{\tilde z}} - \frac{\betaM}{c} \\
 & =   \frac{\betam}{c} e^{\frac{\pi}{\tilde z}} - \left( \frac{\betaM}{c} - \sqrt{\frac{k}{c}} e^{\frac{\tan^{-1}(\hat z)}{\hat z}} \right) e^{\frac{\pi}{\tilde z}} - \frac{\psi^\ast_{ max} - \betam}{c}\\
 & = \sqrt{\frac{k}{c}} e^{\frac{\pi}{\tilde z}+\frac{\tan^{-1}\hat z}{\hat z}} - \left( \frac{\betaM - \betam}{c} \right)\left( 1+e^{\frac{\pi}{\tilde z}} \right) > 0. \label{clocon1} \numberthis
\end{align*}
We will use the inequality in the above form later in Lemma \ref{clocon2lem} to obtain a final condition. However, we can further rewrite this to obtain the inequality as in the statement of the Lemma.
\end{proof}

\begin{remark}
\label{notsharp1}
The condition in Lemma \ref{clocon1lem} is sharp. A relaxed condition could be derived using Lemma \ref{p1astbound}, 
\begin{align*}
& \betaM - \betam < e^{\frac{\pi}{\tilde z}}\left( \betam + (e-2)\sqrt{kc} \right).
\end{align*}
However, we will make use of the sharp condition because it is evident from \eqref{clocon1} that if $\psi\equiv constant$, then there is no need for such a condition. 
\end{remark}
The second segment of the boundary of invariant region is,
\begin{align}
\label{secondseg}
& C_2 = \{ (p,q): (\tilde p(t),\tilde q(t)), t\in [t_2^{we} , 0)\}.
\end{align} 

{\bf Step 3:} For the third segment, we again use \eqref{muMaux} but with different initial conditions than the ones for the first segment. The third segment should start from the ending point of the second segment, i.e., $\hat p(0) = p_2^{we}$ and $\hat q(0) = 1/c$. On solving, we obtain
\begin{align*}
& \hat p(t) = \frac{\betaM}{c} + e^{-\frac{\betaM t}{2}}\left[\left(p_2^{we} - \frac{\betaM}{c}\right)\cos\hat \theta t + \left(\frac{p^{we}_2\betaM}{2 \hat \theta} - \frac{\betaM^2}{2c\hat \theta} \right)\sin\hat\theta t \right],\\
& \hat q(t) = \frac{1}{c} + \frac{e^{-\frac{\betaM t}{2}}}{\hat \theta}\left(p_2^{we} - \frac{\betaM}{c}\right)\sin \hat\theta t.
\end{align*}
Set 
\begin{align}
\label{p3ast}
& p_3^{we} := \hat p(t_3^{we}),
\end{align}
where $t_3^{we}$ is the first negative time when $\hat q(t_3^{we} ) = 0$. Hence, $t_3^{we}$ is the largest negative root of the following,
\begin{align*}
& e^{-\frac{\betaM t_3^{we}}{2}}\sin\hat\theta t_3^{we} = -\frac{\hat\theta}{cp_2^{we} - \betaM}.
\end{align*}
To ensure the invariant region is closed, it should be that while traveling in the negative time direction, the trajectory hits the $p$-axis first before completing the outward spiral turn. The following Lemma ensures this.
\begin{lemma}
\label{clocon2lem}
Let $t_0$ be the first negative time such that $\hat q'(t_0) = 0$. Then $\hat q(t_0)<0$ if and only if 
\begin{equation}
\label{clocon2}
(\betaM-\betam) \left( 1 + e^{-\frac{\pi}{\tilde z}} \right) e^{- \frac{\tan^{-1}(\hat z)}{\hat z}} < \sqrt{kc}\left(1- e^{-\frac{\pi}{\hat z}-\frac{\pi}{\tilde z}}  \right).
\end{equation}
\end{lemma}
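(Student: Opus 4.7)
The plan is to exploit the explicit formulas for $\hat p$, $\hat q$ from Step 3 and reduce the assertion to a chain of reversible algebraic manipulations. Setting $A := p_2^{we} - \betaM/c$, which is positive by Lemma \ref{clocon1lem}, a direct differentiation of the formula for $\hat q$ gives
\[
\hat q'(t) = \frac{A\,e^{-\betaM t/2}}{\hat\theta}\Bigl(\hat\theta\cos\hat\theta t - \tfrac{\betaM}{2}\sin\hat\theta t\Bigr).
\]
The equation $\hat q'(t_0)=0$ therefore reduces to $\tan\hat\theta t_0 = 2\hat\theta/\betaM = \hat z$; since $\hat z>0$, the largest negative root sits in $(-\pi,-\pi/2)$ and equals $\hat\theta t_0 = -\pi + \tan^{-1}\hat z$.

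Next I would evaluate $\hat q(t_0)$ in closed form. Using $\sin(-\pi+\tan^{-1}\hat z) = -\hat z/\sqrt{1+\hat z^2}$ together with the identity $4\hat\theta^2 + \betaM^2 = 4kc$ (which rewrites $\sqrt{1+\hat z^2}$ as $2\sqrt{kc}/\betaM$), I obtain the clean value $\sin\hat\theta t_0 = -\hat\theta/\sqrt{kc}$. Plugging this into the formula for $\hat q$ from Step 3 yields
\[
\hat q(t_0) = \frac{1}{c} - \frac{A}{\sqrt{kc}}\,e^{-\betaM t_0/2}.
\]
Consequently $\hat q(t_0)<0$ is equivalent to $A > \sqrt{k/c}\,e^{\betaM t_0/2}$. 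Since $\betaM/(2\hat\theta) = 1/\hat z$, the exponent rewrites as $\betaM t_0/2 = (\tan^{-1}\hat z - \pi)/\hat z$, so the condition takes the form
\[
A > \sqrt{k/c}\,e^{\tan^{-1}\hat z/\hat z}\,e^{-\pi/\hat z}.
\]

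To finish, I would substitute the explicit value of $A$ obtained in \eqref{clocon1} into this inequality, multiply through by $c$ to clear denominators, then divide both sides by $e^{\pi/\tilde z}$ and by $e^{-\tan^{-1}\hat z/\hat z}$. After regrouping the exponential factors, the inequality takes exactly the form \eqref{clocon2}. Since every step in the chain is reversible, both directions of the ``if and only if'' are proven simultaneously. The only obstacle is careful bookkeeping of the exponents in $\hat z$ and $\tilde z$ and correct use of the identities linking $\hat\theta,\betaM$ to $\hat z$; no ideas beyond the phase-plane analysis of Step 3 and the algebra of Lemma \ref{clocon1lem} are needed.
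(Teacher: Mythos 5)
Your proposal is correct and follows essentially the same route as the paper: solve $\hat q'(t_0)=0$ to get $\hat\theta t_0=-\pi+\tan^{-1}\hat z$, evaluate $\hat q(t_0)$ there, and substitute the expression for $p_2^{we}-\betaM/c$ from Lemma \ref{clocon1lem} to rearrange into \eqref{clocon2}. You also correctly read the paper's ``solving for $\hat q(t)=0$'' as a typo for $\hat q'(t)=0$, and your explicit evaluation $\sin\hat\theta t_0=-\hat\theta/\sqrt{kc}$ is just a slightly more compact way of carrying out the same algebra.
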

\begin{proof}
Solving for $\hat q(t) = 0$, we obtain that 
$$
\hat\theta t_0 = -\pi + \tan^{-1}(\hat z).
$$
Applying the condition $\hat q(t_0)<0$, we obtain 
\begin{align*}
& \frac{1}{c} + \frac{e^{-\frac{\betaM t_0}{2}}}{\hat \theta}\left(p_2^{we} - \frac{\betaM}{c}\right)\sin \hat\theta t_0 <0\\ 
& e^{-\frac{\betaM t_0}{2}}\sin\hat\theta t_0 < -\frac{\hat\theta}{cp_2^{we} - \betaM}.
\end{align*}
Plugging in the value of $t_0$, we need that
\begin{align*}
& e^{\frac{\pi}{\hat z} - \frac{\tan^{-1}(\hat z)}{\hat z}}(cp_2^{we} - \betaM) - \sqrt{kc} > 0.
\end{align*}
Note that \eqref{clocon1} in Lemma \ref{clocon1lem} is indeed a necessary and sufficient condition for $p_2^{we} - \betaM/c>0$ to hold. Hence, we can use \eqref{clocon1} in the above expression to obtain a single final condition. To this end, we want
\begin{align*}
& e^{\frac{\pi}{\hat z} - \frac{\tan^{-1}(\hat z)}{\hat z}}(cp_2^{we} - \betaM) - \sqrt{kc}\\
&\qquad=  e^{\frac{\pi}{\hat z} - \frac{\tan^{-1}(\hat z)}{\hat z}}\left( \sqrt{kc} e^{\frac{\pi}{\tilde z} + \frac{\tan^{-1}(\hat z)}{\hat z}} - (\betaM- \betam)(1+e^{\frac{\pi}{\tilde z}} ) \right) - \sqrt{kc}\\
&\qquad= \sqrt{kc}\left( e^{\frac{\pi}{\hat z}+\frac{\pi}{\tilde z}} - 1 \right) - (\betaM- \betam)(1+e^{\frac{\pi}{\tilde z}} ) e^{\frac{\pi}{\hat z} - \frac{\tan^{-1}(\hat z)}{\hat z}}>0.
\end{align*}
This finishes the proof to the Lemma.
\end{proof}
Finally, we can define the last segment of the boundary of $\Sigma_1^\ast$,
\begin{align}
\label{thirdseg}
& C_3 = \{ (p,q): (\hat p(t),\hat q(t)), t\in [t_3^\ast , 0)\}.
\end{align}
We define the following set
\begin{align}
& \Sigma_1^\ast: = \text{open set enclosed by $C_1,C_2,C_3$ and $p$-axis}.    
\end{align}
By our construction, we know that $\Sigma_1^\ast$ is well-defined.

Next, we have the following Proposition.
\begin{proposition}
\label{propmainwal}
Let $4kc>\betaM^2$. Let the initial conditions for \eqref{mainodesys} be such that $(w(0),s(0))\in\Sigma_1^\ast$. Then $(w(t),s(t))\in\Sigma_1^\ast$ for all $t>0$.
\end{proposition}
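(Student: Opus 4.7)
The plan is a contradiction argument that exploits the explicit construction of $\Sigma_1^\ast$ as a region enclosed by spiral arcs of the auxiliary systems \eqref{muMaux}--\eqref{mumaux}, together with the a priori bounds $\betam \le \psi\ast\rho \le \betaM$ from \eqref{eq:psibounded} and conservation of mass. Suppose, for contradiction, that along some characteristic the trajectory $(w(t),s(t))$ of \eqref{mainodesys} begins in $\Sigma_1^\ast$ but later exits; set $t^\ast := \inf\{t>0 : (w(t),s(t))\notin \Sigma_1^\ast\}$ and $P^\ast := (w(t^\ast),s(t^\ast))$. Then $P^\ast$ lies on one of four smooth pieces of $\partial\Sigma_1^\ast$: the arcs $C_1$, $C_2$, $C_3$, or the segment of the $p$-axis. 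On the $p$-axis piece, $s(t^\ast)=0$ and $w(t^\ast)>0$, so \eqref{mainodesysb} forces $s'(t^\ast)=w(t^\ast)>0$; by continuity $s(t)<0$ just before $t^\ast$, contradicting $s=1/\rho>0$.

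If $P^\ast$ lies on $C_1$ or $C_3$ (both trajectories of \eqref{muMaux}), take $(\hat p,\hat q)$ to solve \eqref{muMaux} with $(\hat p(t^\ast),\hat q(t^\ast))=P^\ast$; by ODE uniqueness it parametrizes the relevant arc locally. Writing $\Delta := (w-\hat p,\,s-\hat q)$, the main calculation gives
\[
\Delta_w' = -kc\,\Delta_s, \qquad \Delta_s' = \Delta_w - \betaM\,\Delta_s + s(\betaM - \psi\ast\rho),
\]
with $\Delta(t^\ast)=0$ and forcing $s(\betaM - \psi\ast\rho)\ge 0$. Thus $\Delta_s'(t^\ast)\ge 0$ and $\Delta_s(t)\le 0$ just before $t^\ast$, i.e.\ $s(t)\le \hat q(t)$. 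By construction, $\Sigma_1^\ast$ faces $C_1$ and $C_3$ on the side containing the focus $(\betaM/c,1/c)$ of \eqref{muMaux}; a local inspection of the inward normal to each arc shows this is the $s$-larger side at every non-corner point. Hence $(w(t),s(t))$ was already outside $\Sigma_1^\ast$ slightly before $t^\ast$, contradicting the minimality of $t^\ast$. The case $P^\ast\in C_2$ is handled identically, with $(\tilde p,\tilde q)$ from \eqref{mumaux}, $\betaM$ replaced by $\betam$, and forcing $s(\betam-\psi\ast\rho)\le 0$ of the opposite sign; since $C_2$ lives in $\{q>1/c\}$ and $\Sigma_1^\ast$ sits on its $s$-smaller side, the same sign bookkeeping closes the argument.

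The main technical obstacle I anticipate is the degenerate case $\psi\ast\rho(t^\ast,x(t^\ast))\in\{\betam,\betaM\}$, where the first-order comparison on $\Delta_s$ vanishes. I plan to handle it either by inspecting higher-order derivatives of $\Delta$ at $t^\ast$ or by running the argument on a shrunken family $\Sigma_1^{\ast,\varepsilon}\Subset \Sigma_1^\ast$ whose boundary stays strictly inside both spirals and hence carries strictly signed forcing, then passing to the limit $\varepsilon\to 0$. The four corner points of $\partial\Sigma_1^\ast$, particularly the cusp at the origin where $C_1$ is tangent to the $p$-axis, require a separate local check of \eqref{mainodesys}, but are resolved within the same framework.
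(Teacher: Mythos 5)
Your contradiction-at-first-exit skeleton is fine and the forcing sign $s(\betaM-\psi\ast\rho)\ge 0$ is the right object, but the way you compare with the boundary is subtly wrong, and this is precisely what the paper's proof is engineered to avoid. You compare $(w(t),s(t))$ with the time-parametrized auxiliary trajectory $(\hat p(t),\hat q(t))$ at the \emph{same time} $t$, and conclude $\Delta_s\le 0$, $\Delta_w\le 0$ just before $t^\ast$. But to know that $(w(t),s(t))$ lies outside $\Sigma_1^\ast$ you need the inequality at the same \emph{$p$-value}, namely $s(t)<\eta_j(w(t))$ where $q=\eta_j(p)$ is the graph of $C_j$. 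For $C_1$ the two coincide because $\eta_1$ is decreasing: $\eta_1(w(t))\ge \eta_1(\hat p(t))=\hat q(t)\ge s(t)$. For $C_3$, however, $\eta_3$ is increasing (both $\hat p'$ and $\hat q'$ are positive along it), so $\eta_3(w(t))\le\hat q(t)$ and $s(t)\le\hat q(t)$ do not compare; your ``lower-left'' displacement has an indeterminate sign against the inward normal $\propto(-,+)$ of $C_3$. The argument can be salvaged by observing that $\Delta_w$ is $O((t^\ast-t)^2)$ while $\Delta_s$ is $O(t^\ast-t)$, so the displacement is purely downward to leading order — but your proposal never makes this observation, and it is the crux. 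The paper sidesteps all of this by eliminating $t$ and tracking $\hat q(p)-s(p)$ along $p$: the quantity that matters, $s-\eta_j(w)$, is differentiated directly in $p$ and closes via Gronwall without any bookkeeping of $\Delta_w$.

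Two further concerns. First, the degenerate case $\psi\ast\rho(t^\ast)=\betaM$ (zero forcing at $t^\ast$): the higher-order-derivative route does not obviously close, since the second derivative of the forcing brings in $(\psi\ast\rho)'$ whose sign is uncontrolled; and the ``shrunken family'' $\Sigma_1^{\ast,\varepsilon}$ does not by itself produce strictly signed forcing unless you also perturb $\betaM,\betam$ in the auxiliary systems (the forcing is evaluated at the actual trajectory, not at the shrunken boundary). The paper's $p$-parametrized Gronwall inequality absorbs this without a case split because the differential inequality holds with $\le$ and the strict inequality enters through the initial condition $\hat q(w_1-\epsilon)-s(w_1-\epsilon)<0$. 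Second, you still need the a priori fact that $w'>0$ on $C_1,C_3$ (i.e.\ $s<1/c$ there) and $w'<0$ on $C_2$ to know which direction the trajectory approached from; the paper uses this explicitly, and you should too to decide on which side of $t^\ast$ to compare.
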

We will prove the Proposition by drawing comparison between the solution trajectory $(w,s)$ and the boundary of $\Sigma_1^\ast$. Due to the presence of oscillations, a time based comparison between systems \eqref{mainodesys} and \eqref{mumMaux} cannot be derived. To circumvent this, we will draw comparisons in the $(p,q)$ plane.
\begin{proof} 
We will show that a solution trajectory to \eqref{mainodesys} with initial data in $\Sigma_1^\ast$ can never touch its boundary as time increases. 
By getting rid of the time parameter $t$ in the systems \eqref{muMaux} and \eqref{mumaux}, we obtain the following two trajectory equations below. These will play a significant role in proving the invariance of $\Sigma_1^\ast$.
\begin{subequations}
\label{etas}
\begin{align}
& \frac{d\hat q}{dp} = \frac{p - \hat q \betaM}{k-kc\hat q}, \label{eta1}\\
& \frac{d\tilde q}{dp} = \frac{p -\tilde q \betam}{k-kc\tilde q}. \label{eta2}
\end{align}
\end{subequations}

We start by showing a contradiction if the trajectory touches $C_1$. To this end, assume a point $(w_1, s_1)\in C_1$ where the trajectory meets $C_1$. Therefore, $w_1\leq 0$ and $s_1<1/c$. For a reminder, any portion of $C_1$ is $(p,\hat q(p))$ with appropriate initial conditions and values of $p$. We also get rid of the time parameter in \eqref{mainodesys} to write $s$ as a function of another variable and satisfying,
\begin{equation}
\label{slesspara}
\frac{ds}{dp} = \frac{p-s\psi\ast\rho}{k-kcs}.
\end{equation}
We have $s_1 = \hat q(w_1)= s(w_1)$.
Since $\left.w'\right|_{(w_1,s_1)}>0$, the trajectory $(p,s(p))$ was moving in the positive $p$ direction before touching $C_1$, see Figure \ref{propfigC1}. Note that,
\begin{align*}
\frac{d(\hat q - s)}{dp} & = \frac{p- \hat q \betaM}{k-kc\hat q}- \frac{p-s(\psi\ast\rho) }{k-kcs} \\
& =  \frac{-cp(s - \hat q )-(  \betaM\hat q - (\psi\ast\rho) s)+ cs \hat q( \betaM - \psi\ast\rho)}{k(1-c\hat q)(1-c s )}\\ 
& = \frac{(cp - \psi\ast\rho)(\hat q-s) - \hat q(\betaM - \psi\ast\rho)(1-cs)}{k(1-c\hat q)(1-cs)}\\
& = \frac{(cp - \psi\ast\rho)}{k(1-c\hat q)(1-cs)}(\hat q -s) - \frac{\hat q(\betaM - \psi\ast\rho)}{k (1-c\hat q)}
\end{align*}
In a neighborhood of $p = w_1$ (if $w_1=0$ consider left neighborhood),
\[
  \frac{d(\hat q - s)}{dp}  \leq \frac{(cp- \psi\ast\rho) }{ k(1-c\hat q) (1-cs)} (\hat q -s).
\]
Upon integration in the interval $(w_1 - \epsilon,w_1)$, $\epsilon>0$ being sufficiently small, we obtain
\[
0 = \hat q(w_1) - s(w_1) \leq \left(\hat q (w_1-\epsilon)- s(w_1 - \epsilon) \right) e^{\int_{w_1 - \epsilon}^{w_1} \frac{(cp-\psi\ast\rho)}{k(1-c\hat q(p)) (1-cs(p))}dp} <0.
\]
This is a contradiction. Hence, a trajectory with initial point inside $\Sigma_1^\ast$ can never touch $C_1$. A very similar argument holds for $C_3$.

\begin{figure}[ht!] 
\centering
\subfigure[Trajectory touching $C_1$.]{\label{propfigC1}\includegraphics[width=0.4\textwidth]{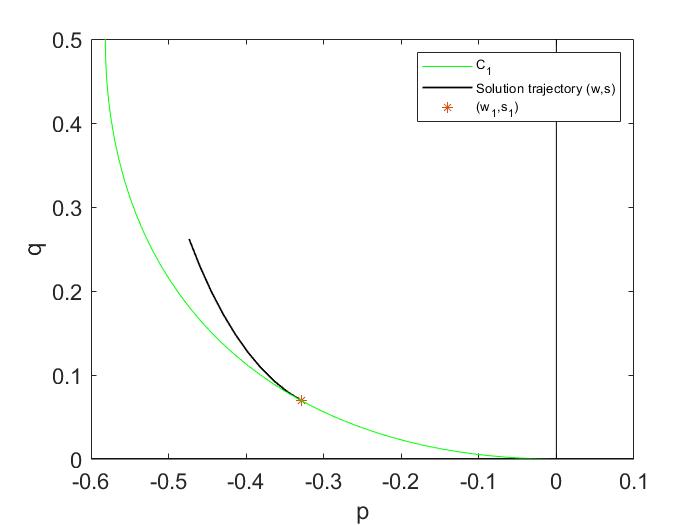}}
\qquad
\subfigure[Trajectory touching $C_2$.]{\label{propfigC2}\includegraphics[width=0.4\textwidth]{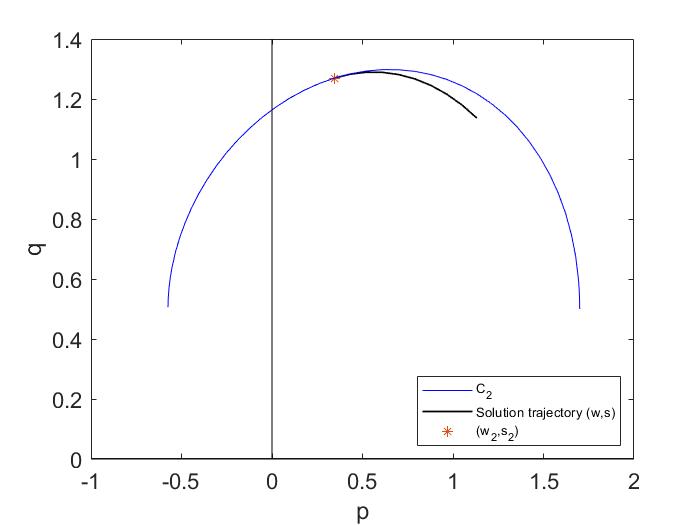}}
\caption{Trajectories touching boundary of $\Sigma_1^\ast$.}
\end{figure}

Now we show for $C_2$. For sake of contradiction, suppose there exists a point $(w_2,s_2)\in C_2$ where the trajectory, $(w,s)$, touches $C_2$. For a reminder, any portion of $C_2$ is $(p,\tilde q(p))$ with appropriate initial conditions and range of $p$. Owing to our assumptions, we have $1/c< s_2 = s(w_2) = \tilde q(w_2)$. Since $\left.w'\right|_{(w_2,s_2)}<0$, the solution trajectory $(p,s(p))$ was traveling in the negative $p$ direction when it touched $C_2$, see Figure \ref{propfigC2}. Similar to our previous calculations we obtain from \eqref{eta2} and \eqref{slesspara} that,
\begin{align*}
\frac{d(\tilde q- s)}{dp} & = \frac{(cp-\psi\ast\rho)}{k(1-c s)(1-c\tilde q)} (\tilde q -s) + \frac{\tilde q(\psi\ast\rho-\betam)}{k(1-c\tilde q)}.
\end{align*}
In a neighborhood of $p=w_2$,
\begin{align*}
\frac{d(\tilde q- s)}{dp} & \leq \frac{(cp-\psi\ast\rho)}{k(1-c s)(1-c\tilde q)} (\tilde q -  s) .
\end{align*}
Upon integration in the interval $(w_2, w_2 + \epsilon)$, for $\epsilon>0$ sufficiently small, we obtain
\begin{align*}
0<\tilde q(w_2 + \epsilon)- s(w_2 + \epsilon) & \leq \left( \tilde q(w_2) - s(w_2) \right) e^{\int_{w_2}^{w_2+\epsilon}\frac{cp-\psi\ast\rho}{k(1-cs(p))(1-c\tilde q(p))}dp} = 0.
\end{align*}
Hence, the solution trajectory cannot cross $C_2$.

Moreover, a trajectory $(w,s)$ starting from any point $(p,0)$ with $p>0$ will go up into the region because at any such point,
\begin{align*}
& \left.s'\right|_{(p,0)} = \left.w - (\psi\ast\rho)s \right|_{(p,0)} = p >0.
\end{align*}
This completes the proof to the proposition.
\end{proof}

Now we will transform $\Sigma_1^\ast$ to obtain an invariant region for \eqref{Grhosys}. To this end, define a map by $F:\mathbb{R}^2 \to \mathbb{R}^2$,
\begin{align}
\label{transmap1}
F(p,q) = (p/q,1/q).
\end{align}
$F$ is invertible for $q>0$. We define 
\begin{align}
\label{invregtransf}
\Sigma_1 := F(\Sigma_1^*),  
\end{align}
which is an invariant region for $(G, \rho)$. See Figure \ref{subcrGrhoweak} for the shape of the subcritical region, $\Sigma_1$, in $(G,\rho)$ coordinates.

\begin{figure}[ht!] 
\centering
\subfigure{\includegraphics[width=0.5\linewidth]{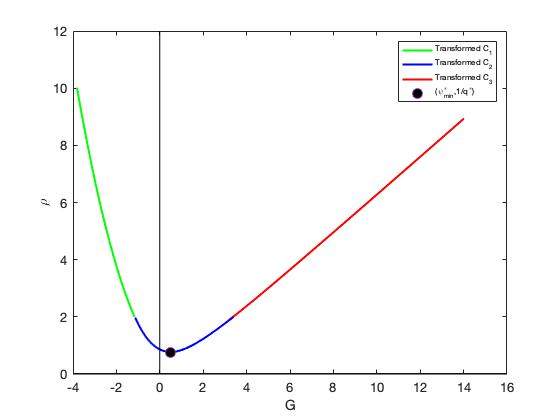}}
\caption{$C_1,C_2,C_3$ as transformed to original coordinates $(G,\rho)$.}
\label{subcrGrhoweak}
\end{figure}

\begin{remark}
\label{qastrem}
In Figure \ref{subcrGrhoweak}, the $\rho$ coordinate of the tip of $\Sigma_1$ is $1/q^\ast$, where $q^\ast$ is the $q$ coordinate of the highest point of $\Sigma_1^\ast$, see Figure \ref{fig1}. The expression of $q^\ast$ can be explicitly written as,
\begin{align}
\label{qastexp}
& q^\ast = \frac{1}{c} +  \frac{e^{\frac{\pi}{\tilde z} - \frac{\tan^{-1}(\tilde z)}{\tilde z}}}{\sqrt{kc}}\left( \frac{\betam}{c} - p_1^{we} \right).
\end{align}
Since $p_1^{we}<0$ from Lemma \ref{p1astbound}, we have that $q^\ast >1/c$.
Also, from Figure \ref{subcrGrhoweak}, we see that there is no point $(G,\rho)$ in $\Sigma_1$ such that $\rho < 1/q^\ast$.
\end{remark}
As a direct result of Proposition \ref{propmainwal} and transformation \eqref{vartransf}, we have the following corollary.
\begin{corollary}
\label{corwal}
Let $4kc>\betaM^2$ and \eqref{clocon2} holds. Let initial conditions for \eqref{Grhosys} be such that $(G(0),\rho(0))\in\Sigma_1$. Then $(G(t),\rho(t))\in\Sigma_1$ for all $t>0$. In particular, $G,\rho$ are bounded for any time.
\end{corollary}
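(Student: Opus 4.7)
The plan is to transfer Proposition \ref{propmainwal} from the auxiliary $(w,s)$-variables back to the physical $(G,\rho)$-variables via the transformation \eqref{vartransf}, which is exactly $F^{-1}$ restricted to the half-plane $\{\rho > 0\}$.

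First, I would verify that $F(p,q) = (p/q, 1/q)$ from \eqref{transmap1} is a smooth bijection between $\{q > 0\}$ and $\{\rho > 0\}$ with smooth inverse $(G,\rho)\mapsto(G/\rho, 1/\rho)$, so the systems \eqref{Grhosys} and \eqref{mainodesys} are equivalent wherever $\rho > 0$. By \eqref{invregtransf}, $\Sigma_1 = F(\Sigma_1^*)$, and any $(G(0),\rho(0))\in\Sigma_1$ automatically has $\rho(0)>0$ (indeed $\rho(0)\geq 1/q^\ast$, by Remark \ref{qastrem}), so the initial datum transports cleanly to $(w(0),s(0))=F^{-1}(G(0),\rho(0))\in \Sigma_1^*$.

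Next, since the hypotheses $4kc>\betaM^2$ and \eqref{clocon2} are exactly what the construction of $\Sigma_1^*$ requires (recall from \eqref{clocon1} that \eqref{clocon2} already subsumes the inequality of Lemma \ref{clocon1lem}), Proposition \ref{propmainwal} applies directly, yielding $(w(t),s(t))\in\Sigma_1^*$ for all $t>0$. Because $\Sigma_1^*\subset\{q>0\}$, one has $s(t)>0$ throughout, so $F$ may be applied at each time to conclude $(G(t),\rho(t)) = F(w(t),s(t))\in\Sigma_1$.

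The only step I expect to require genuine care is the boundedness of $G$ and $\rho$, since $F$ stretches any neighborhood of the $p$-axis out to infinity in the $\rho$-direction. The region $\Sigma_1^*$ is bounded, so $|w(t)|$ and $s(t)$ are dominated by absolute constants; but $\rho=1/s$ could a priori blow up if the trajectory approached the $p$-axis portion of $\partial\Sigma_1^*$. To rule this out on any finite interval $[0,T]$, I would invoke continuity of the ODE flow: the orbit $\{(w(t),s(t)):t\in[0,T]\}$ is a compact subset of the open set $\Sigma_1^*$ and hence lies at positive distance from $\partial\Sigma_1^*$, providing a uniform lower bound $s(t)\geq s_T>0$ on $[0,T]$. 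Consequently $\rho(t)=1/s(t)$ and $G(t)=w(t)/s(t)$ are bounded on each finite interval, which combined with the extension criterion \eqref{BKM} of Theorem \ref{local} delivers the global smooth solution.
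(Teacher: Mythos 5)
Your argument follows the paper's intended route: the paper presents this as a ``direct result of Proposition \ref{propmainwal} and transformation \eqref{vartransf}'' with no further detail, and you have supplied exactly the underlying steps --- transfer the initial datum via $F^{-1}$ (valid since $\rho(0)\geq 1/q^\ast>0$ by Remark \ref{qastrem}, so the case $\rho_0=0$ never arises here), apply Proposition \ref{propmainwal} under the hypotheses $4kc>\betaM^2$ and \eqref{clocon2} (which indeed subsumes \eqref{clocon1}), and map back via $F$. Your closing paragraph is not mere pedantry but a genuinely necessary addition that the paper elides: $\Sigma_1=F(\Sigma_1^\ast)$ is in fact \emph{unbounded} in the $(G,\rho)$-plane, because the segment of $\partial\Sigma_1^\ast$ lying on the $p$-axis is sent by $F$ to $\rho=1/q\to\infty$, so membership in $\Sigma_1$ alone does not bound $\rho$ or $G$; the compactness-of-the-orbit argument on each finite interval $[0,T]$ is precisely what is required to obtain finiteness of $\int_0^T(\|G\|_{L^\infty}+\|\rho\|_{L^\infty})\,dt$ and invoke the continuation criterion \eqref{BKM}.
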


\subsection{Strong alignment}
\label{strongal}
Now, we handle the case where all admissible values of $\beta\in[\betam, \betaM]$ in \eqref{auxsys} are such that 
$$
\beta^2\geq 4kc.
$$
In such a case $(\beta/c,1/c)$ is an asymptotically stable node and the solutions to \eqref{muMaux} and \eqref{mumaux} will not have any sinusoidal components. 
We call this scenario the strong alignment case. As before, we will construct an invariant region using specific trajectories. Unlike the invariant region constructed in Section \ref{weakal}, here we will have an unbounded subcritical region, $\Sigma_2^\ast$, see Figure \ref{fig2}. 

\begin{figure}[ht!] 
\centering
\subfigure{\includegraphics[width=0.5\linewidth]{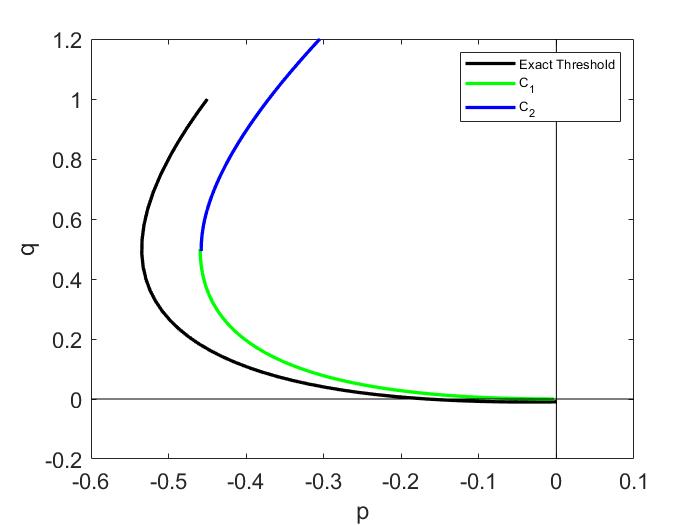}}
\caption{Invariant region.}
\label{fig2}
\end{figure}

We first define some notation to be used in construction of $\Sigma_2^\ast$. Set
\begin{align*}
& \hat\gamma_+ := \frac{\betaM+\sqrt{\betaM^2 - 4kc}}{2},\quad \hat\gamma_- := \frac{\betaM-\sqrt{\betaM^2- 4kc}}{2},\\
& \tilde\gamma_+ := \frac{\betam+\sqrt{\betam^2- 4kc}}{2},\quad \tilde\gamma_- := \frac{\betam-\sqrt{\betam^2 - 4kc}}{2}.
\end{align*}
\begin{remark}
\label{borderlineremark}
In this Section as well as Section \ref{mediumal}, we should point out that if $\betaM^2 = 4kc$, then the expressions of $\hat p,\hat q$ have different form than the ones when $\betaM^2 > 4kc$, which is assumed for calculations below. However, the calculated expressions for $p_1^{se},p_2^{se},p_3^{me}$ always hold, although in the limit sense when $\betaM^2 = 4kc$. A more detailed note about this is mentioned right after the proof of Lemma \ref{p1selem}.
\end{remark}

{\bf Step 1:} The first segment of the curve, $C_1$, is the trajectory to \eqref{muMaux} with the starting point at origin and the ending point lying on the line $q=1/c$ in the second quadrant. Set $p_1^{se} := \hat p(t_1^{se})$ so that $(p_1^{se},1/c)$ is the end point of $C_1$ lying in the second quadrant. Here, $t_1^{se}$ is the negative time when $\hat q(t_1^{se}) = 1/c$ with $\hat p(0) =\hat q(0)= 0$. On solving, we obtain,
\begin{align*}
& \hat p(t) = \frac{\betaM}{c} - \frac{k}{\sqrt{ \betaM^2- 4kc}}\left( \frac{\hat\gamma_+}{ \hat\gamma_-}e^{-\hat\gamma_- t} - \frac{\hat\gamma_-}{ \hat\gamma_+}e^{-\hat\gamma_+ t} \right),  \\
& \hat q(t) = \frac{1}{c} - \frac{1}{c\sqrt{ \betaM^2- 4kc}} \left( \hat\gamma_+ e^{-\hat\gamma_- t} - \hat\gamma_- e^{-\hat\gamma_+ t} \right).
\end{align*}
When $\hat q(t_1^{se}) = 1/c$,
\begin{align*}
& e^{\left(\sqrt{ \betaM^2- 4kc}\right) t_1^{se}} =  \frac{\hat\gamma_-}{\hat\gamma_+}.
\end{align*}
Consequently, using the fact that $\hat\gamma_+\hat\gamma_- = kc$, we have
\begin{align*}
p_1^{se} & = \frac{\betaM}{c} - \frac{k}{\sqrt{ \betaM^2- 4kc}}e^{-\frac{\betaM t_1^{se}}{2}}\left( \frac{\hat\gamma_+}{ \hat\gamma_-}e^{\frac{t_1^{se} }{2} \sqrt{ \betaM^2- 4kc}} - \frac{\hat\gamma_-}{ \hat\gamma_+}e^{-\frac{t_1^{se} }{2} \sqrt{ \betaM^2- 4kc}} \right) \\
& = \frac{\betaM}{c} + \frac{k}{\sqrt{\betaM^2- 4kc}}e^{-\frac{\betaM t_1^{se}}{2}}\left( -\sqrt{\frac{ \hat\gamma_+}{\hat\gamma_-}} + \sqrt{\frac{ \hat\gamma_-}{ \hat\gamma_+}} \right)\\
& = \frac{\betaM}{c} - k e^{-\frac{\betaM t_1^{se}}{2}}\left( \frac{1}{\sqrt{\hat\gamma_- \hat\gamma_+}} \right)\\
& = \frac{\betaM}{c} - \sqrt{\frac{k}{c}}\left( \frac{\hat\gamma_+}{ \hat\gamma_-} \right)^{\frac{\betaM}{2\sqrt{\betaM^2 - 4kc }}}. \label{p1ast2} \numberthis
\end{align*}
\begin{lemma}
\label{p1selem}
$p_1^{se}\in \sqrt{\frac{k}{c}}\,\left[-(e-2),0\right)$.
\end{lemma}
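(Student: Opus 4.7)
The goal is to reduce $p_1^{se}$ to a one–parameter function, compute its values at the two endpoints, and show monotonicity in between, mirroring the argument of Lemma \ref{p1astbound}. A convenient dimensionless parameter is $r := \hat\gamma_+/\hat\gamma_- \in [1,\infty)$. Using the Vieta relations $\hat\gamma_+\hat\gamma_- = kc$ and $\hat\gamma_+ + \hat\gamma_- = \betaM$, one checks that $\betaM = \sqrt{kc}\,(r+1)/\sqrt{r}$ and $\sqrt{\betaM^2-4kc} = \sqrt{kc}\,(r-1)/\sqrt{r}$, so that the exponent simplifies to
\[
\frac{\betaM}{2\sqrt{\betaM^2-4kc}} = \frac{r+1}{2(r-1)}.
\]
Substituting into \eqref{p1ast2} gives $p_1^{se} = \sqrt{k/c}\,h(r)$ with
\[
h(r) := \frac{r+1}{\sqrt{r}} - r^{\frac{r+1}{2(r-1)}},\qquad r\geq 1.
\]

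The endpoint analysis is straightforward. For $r\to 1^+$ (the borderline $\betaM^2=4kc$ flagged in Remark \ref{borderlineremark}), use $\ln r = (r-1) - \tfrac12(r-1)^2 + \cdots$ to obtain
\[
\frac{(r+1)\ln r}{2(r-1)} \longrightarrow 1,\qquad\text{hence}\qquad r^{\frac{r+1}{2(r-1)}}\longrightarrow e,
\]
so $h(r)\to 2-e$. For $r\to\infty$ (the regime $\betaM\gg\sqrt{kc}$), split the exponent as $\tfrac{1}{2} + \tfrac{1}{r-1}$ to write $r^{\frac{r+1}{2(r-1)}} = \sqrt{r}\,\exp(\ln r/(r-1)) = \sqrt{r}(1+o(1))$; combined with $(r+1)/\sqrt{r} = \sqrt{r} + 1/\sqrt{r}$, this gives $h(r)\to 0$.

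It remains to show that $h$ is monotone on $(1,\infty)$, which will upgrade the endpoint values into the claimed interval $[-(e-2),0)$. The natural route is to show $h$ is strictly increasing: then the closed endpoint $2-e$ is attained in the limit $\betaM^2\to 4kc$ and the open endpoint $0$ is never reached. I expect this monotonicity step to be the main technical obstacle, since $h$ involves a variable exponent. Two workable strategies are (i) differentiate $h$ and reduce positivity of $h'(r)$ to the elementary inequality that $r\mapsto r^{1/(r-1)}$ is strictly decreasing on $(1,\infty)$ (a well-known consequence of $\ln r < (r-1)/\sqrt{r}$ for $r>1$); or (ii) work in the equivalent parametrization $\tau = \sqrt{kc}/\betaM\in(0,\tfrac12]$ used in Lemma \ref{p1astbound} and verify the sign of the derivative directly, which by construction parallels the monotonicity computation already performed in the weak-alignment case.

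Combining the two limits with monotonicity yields $h(r)\in[2-e,0)$ for $r\in[1,\infty)$, and therefore $p_1^{se}\in\sqrt{k/c}\,[-(e-2),0)$, as claimed. Note that the attainment of the left endpoint when $\betaM^2 = 4kc$ is consistent with the limit interpretation indicated in Remark \ref{borderlineremark}.
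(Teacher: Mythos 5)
Your reparametrization via $r = \hat\gamma_+/\hat\gamma_- \in[1,\infty)$ is valid and arguably cleaner than the paper's choice $\tau = \sqrt{kc}/\betaM\in(0,\tfrac12]$; the algebra $\betaM = \sqrt{kc}\,(r+1)/\sqrt r$, $\sqrt{\betaM^2-4kc}=\sqrt{kc}\,(r-1)/\sqrt r$, and the resulting $p_1^{se}=\sqrt{k/c}\,h(r)$ are all correct, as are both endpoint limits $h(1^+)=2-e$ and $h(\infty^-)=0$. Structurally this mirrors the paper's proof of Lemma~\ref{p1selem}, which introduces $g(\tau)$, states (without detail) that it is decreasing, and computes the same two limits.

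The gap is in the monotonicity step, and more specifically in the claim you make about strategy (i). You assert that positivity of $h'(r)$ ``reduces to the elementary inequality that $r\mapsto r^{1/(r-1)}$ is strictly decreasing.'' That reduction does not hold. Writing $a(r)=r^{1/(r-1)}$ and factoring $h(r)=\sqrt r\,(1+1/r-a(r))$, a direct computation gives
\[
h'(r)>0 \iff 1-\tfrac1r-a(r) - 2r\,a'(r)>0,
\]
and here the first three terms sum to something strictly negative for $r>1$ (since $a(r)>1$), so $h'>0$ requires a quantitative lower bound on $-a'(r)$, not merely its sign. Indeed a Taylor expansion near $r=1$ shows $h'(1^+)=0$, so the positivity of $h'$ is a delicate second-order effect that cannot follow from a monotonicity statement about $a$ alone. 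The parenthetical is also off: $a$ decreasing is equivalent to $\ln r > 1-1/r$ (the tangent-line lower bound for $\ln$), whereas $\ln r < (r-1)/\sqrt r$ is an upper bound pointing the other way and is not what drives $a'<0$. Strategy (ii) just defers to the monotonicity ``already performed'' in Lemma~\ref{p1astbound}, but that proof is equally terse (``one can evaluate that $f$ is monotonically decreasing''), so there is nothing there to lean on. Net: your endpoint analysis and reparametrization are fine and match the paper's level of detail, but the specific claimed reduction in strategy (i) is wrong and should be removed or replaced by an actual derivative estimate.
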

\begin{proof}
\eqref{p1ast2} can be rewritten as a function of essentially one variable,
$$
g(\tau) = \sqrt{\frac{k}{c}} \left( \frac{1}{\tau} - \left( \frac{1 + \sqrt{1 - 4\tau^2}}{1 - \sqrt{1 - 4\tau^2}} \right)^{\frac{1}{2\sqrt{1-4\tau^2}}}  \right), \quad \tau = \frac{\sqrt{kc}}{\betaM},\ \tau\in (0,1/2].
$$
One can check that the above function is decreasing with 
$$
\lim_{\tau\to(1/2)^-}g(\tau) = (2-e)\sqrt{\frac kc}\quad\text{and}\quad
\lim_{\tau\to 0^+} g(\tau) = 0.
$$ 
Hence, the result holds.
\end{proof}
Since the ODE system \eqref{muMaux} is well-posed, $g(1/2) = f(1/2) = -(e-2)\sqrt{k/c} $, where $f$ is as defined in proof of Lemma \ref{p1astbound}. Moreover, if $\betaM = 2\sqrt{kc}$ (or equivalently $\tau = 1/2$), then the point $p_1^{se} = p_1^{we} =  -(e-2)\sqrt{k/c}$. The relation between $f$ and $g$ is much more. In fact, they are equal if we extend each of their domains to $\mathbb{R}^+$, see Remark \ref{p1selem2}.  
\begin{remark}
\label{p1selem2}
We recall $f$ here,
$$
f(\tau) = \sqrt{\frac{k}{c}}\left( \frac{1}{\tau} - e^{\frac{\tan^{-1}\sqrt{4\tau^2-1}}{\sqrt{4\tau^2 - 1}}} \right),\quad \tau\geq \frac{1}{2}.
$$
As a function into $\mathbb{R}$, $f$ is defined only for $\tau\geq 1/2$. We aim to extend it to accommodate $\tau\in\mathbb{R}^+$. It turns out that 
$$
e^{\frac{\tan^{-1}\sqrt{4\tau^2-1}}{\sqrt{4\tau^2 - 1}}} = \left( \frac{1 + \sqrt{1 - 4\tau^2}}{1 - \sqrt{1 - 4\tau^2}} \right)^{\frac{1}{2\sqrt{1-4\tau^2}}} ,\quad \tau \in(0,\infty).
$$
To see this, let $z := \sqrt{4\tau^2-1}, y:= \sqrt{1-4\tau^2}$. Consequently, $e^{\frac{\tan^{-1}z}{z}} = e^{\frac{\tan^{-1}iy}{iy}}=:h(y)$. We have,
\begin{align*}
& \tan(\ln (h^{iy})) = iy\\
& \frac{e^{i\ln(h^{iy})} - e^{-i\ln(h^{iy})}}{i\left( e^{i\ln(h^{iy})} + e^{-i\ln(h^{iy})} \right)} = iy\\
& \frac{h^{-y} - h^y}{h^{-y}+h^y} = -y\\
& h^{2y} = \frac{1+y}{1-y},
\end{align*}
and finally,
\begin{align*}
& h(y) = \left( \frac{1+y}{1-y} \right)^{\frac{1}{2y}}.
\end{align*}
\end{remark}
Owing to Remark \ref{p1selem2}, the formula for $p_1^{se}$ is the same as $p_1^{we}$, which is,
$$
p_1^{se} = \frac{\betaM}{c} - \sqrt{\frac{k}{c}} e^{\frac{\tan^{-1}(\hat z)}{\hat z}},
$$
where $\hat z$ is purely imaginary and output of $\tan^{-1}$ is the principal value.

We now define the first segment of boundary of $\Sigma_2^\ast$,
\begin{align}
\label{salc1}
& C_1 = \{ (p,q): (\hat p(t),\hat q(t)),\ t\in [t_1^{se} ,0)  \}.
\end{align}


{\bf Step 2:} Now, we move on to the second segment. For this part, we need the solutions to \eqref{mumaux} with initial condition $\tilde p(0) = p_1^{se}$ and $\tilde q(0) = 1/c$. Hence,
\begin{align*}
& \tilde p (t) = \frac{\betam}{c} + \frac{(\betam - cp_1^{se})}{c\sqrt{\betam^2-4kc }} \left( \tilde\gamma_- e^{-\tilde\gamma_+ t} - \tilde\gamma_+ e^{-\tilde\gamma_- t} \right), \\
& \tilde q(t) = \frac{1}{c} + \frac{(\betam - cp_1^{se})}{\sqrt{\betam^2-4kc }} \left( e^{-\tilde\gamma_+ t} - e^{-\tilde\gamma_- t} \right). \\
\end{align*}
Note that $\tilde p,\tilde q$ are strictly decreasing for $t<0$ and $\lim_{t\to-\infty}  \tilde p(t) = \lim_{t\to-\infty} \tilde q(t) = \infty $.
We now define $C_2$.
\begin{align}
\label{salc2}
& C_2 = \{ (p,q): (\tilde p(t),\tilde q(t)),\ t\in (-\infty , 0] \}.
\end{align}
This completes our construction and we are ready to define $\Sigma_2^\ast$.
\begin{align}
& \Sigma_2^\ast = \text{unbounded open set surrounded by } C_1, C_2, \{(p,0): p>0\} \text{ on 3 sides} .
\end{align}
Our construction ensures $\Sigma_2^\ast$ is well-defined. The following proposition states the invariance of $\Sigma_2^\ast$.
\begin{proposition}
\label{propmainsal}
Let $4kc\leq\betam^2$. Consider the ODE system \eqref{mainodesys}. If $(w(0),s(0))\in \Sigma_2^\ast$ then $(w(t),s(t))\in\Sigma_2^\ast$ for all $t>0$. In particular, $w,s$ remain bounded and $s(t)>0$ for all time.
\end{proposition}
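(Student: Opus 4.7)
The plan is to mirror the phase-plane contradiction argument used for Proposition~\ref{propmainwal}. Since the nonlocal system \eqref{mainodesys} admits no direct time-ordered comparison with the local auxiliary systems \eqref{muMaux}--\eqref{mumaux}, one works instead in the $(p,q)$-plane: the boundary pieces $C_1$ and $C_2$ are described by the trajectory equations \eqref{eta1} and \eqref{eta2}, while the solution itself, viewed as $q = s(p)$, satisfies \eqref{slesspara}. The goal is to show that a trajectory beginning inside $\Sigma_2^\ast$ cannot touch any of the three boundary components $C_1$, $C_2$, or the open positive $p$-axis, from which the invariance claim, and in turn $s>0$ for all time, follows immediately.

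For a hypothetical first contact $(w_1,s_1) \in C_1$, one has $s_1 < 1/c$, so $w'|_{(w_1,s_1)} = k(1-cs_1) > 0$ and the solution curve approaches from $p<w_1$. Subtracting \eqref{slesspara} from \eqref{eta1} yields, exactly as in the weak alignment case,
\[
\frac{d(\hat q - s)}{dp} = \frac{cp - \psi\ast\rho}{k(1-c\hat q)(1-cs)}(\hat q - s) - \frac{\hat q(\betaM - \psi\ast\rho)}{k(1-c\hat q)}.
\]
Since $0 \le \hat q < 1/c$ and $\psi\ast\rho \le \betaM$ along $C_1$, the last term is non-positive, and a Gronwall estimate on $(w_1 - \epsilon, w_1)$ combined with $\hat q(w_1) = s(w_1)$ and $\hat q(w_1 - \epsilon) < s(w_1 - \epsilon)$ produces the required contradiction.

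For a hypothetical first contact $(w_2,s_2) \in C_2$, one has $s_2 > 1/c$, so $w'<0$ at the contact and the solution approaches from $p > w_2$. The analogous computation using \eqref{slesspara} and \eqref{eta2} gives
\[
\frac{d(\tilde q - s)}{dp} = \frac{cp - \psi\ast\rho}{k(1-c\tilde q)(1-cs)}(\tilde q - s) + \frac{\tilde q(\psi\ast\rho - \betam)}{k(1-c\tilde q)},
\]
and since $\tilde q > 1/c$, $1-c\tilde q < 0$, and $\psi\ast\rho \ge \betam$, the last term is again non-positive. Gronwall on $(w_2, w_2 + \epsilon)$ together with $\tilde q(w_2) = s(w_2)$ and $\tilde q(w_2 + \epsilon) > s(w_2 + \epsilon)$ closes the contradiction. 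At any point $(p,0)$ with $p>0$, $s'|_{(p,0)} = p > 0$, so a trajectory cannot cross the positive $p$-axis from above.

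The principal simplification over the weak alignment case is that $C_2$ extends to infinity and there is no third segment $C_3$, so no closure condition analogous to Lemma~\ref{clocon2lem} is required. The one delicate point to verify is that the Gronwall coefficient $(cp - \psi\ast\rho)/[k(1-c\hat q)(1-cs)]$, and its $\tilde q$-analogue, is integrable on the chosen small neighborhood; this is automatic because each of $1-c\hat q$, $1-cs$ (respectively $1-c\tilde q$, $1-cs$) is strictly signed at the contact point and continuity keeps their product bounded away from zero on a sufficiently small one-sided neighborhood. Boundedness of $(w,s)$ on any finite time interval then follows from invariance together with local wellposedness.
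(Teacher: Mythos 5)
Your invariant-region argument for $C_1$, $C_2$, and the positive $p$-axis is correct and matches what the paper intends (the paper simply points to the proof of Proposition~\ref{propmainwal} and omits the details; you have filled them in accurately, and your sign-checking of the extra terms is right). However, your final sentence contains a genuine gap. You claim that ``boundedness of $(w,s)$ on any finite time interval then follows from invariance together with local wellposedness,'' but this is not a valid inference: the region $\Sigma_2^\ast$ is \emph{unbounded} (both $C_2$ and the positive $p$-axis extend to infinity), so membership in $\Sigma_2^\ast$ does not preclude $(w,s)$ escaping to infinity, possibly in finite time. Local wellposedness alone does not rule this out either; it only guarantees existence until a possible blowup.

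What the paper actually does, and what you need, is the following chain of explicit estimates, which relies crucially on the positivity you have already established. Since $s(t)>0$ for all $t$, equation \eqref{mainodesysa} gives $w' = k - kcs < k$, so $w(t) < w(0) + kt$, i.e.\ $w$ cannot blow up in finite time. Feeding this into \eqref{mainodesysb} together with $\psi*\rho \ge 0$ and $s > 0$ yields $s' = w - s\,\psi*\rho \le w$, so $s$ also grows at most polynomially. These two lines are the content of ``$w,s$ remain bounded'' in the proposition; without them the ``in particular'' clause is unproved. I recommend replacing the final sentence of your proof with this explicit argument.
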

\begin{proof}
The proof for the part that $(w,s)$ never crosses $C_1$ or $C_2$ is very similar to that in the proof of Proposition \ref{propmainwal}. So, we will omit it here. We prove that $s>0$ and $w,s$ remain bounded.

The only points where the trajectory $(w,s)$ could cross the $p$-axis are of the form $(p,0)$ where $p>0$. However, at any such point, $s'>0$ and therefore, the trajectory moves upwards. Consequently, $s(t)>0$ for all $t>0$. As a result,
\begin{align*}
w' & = k - kcs < k. 
\end{align*}
Therefore, $w$ is bounded from above. Moreover,
\begin{align*}
& s' = w - s\psi\ast\rho \leq w,
\end{align*}
and hence, $s$ is bounded from above.
\end{proof}
Similar to what we did in Section \ref{weakal}, we will now transform $\Sigma_2^\ast$ to obtain an invariant region for \eqref{Grhosys}. However, due to the fact that $\Sigma_2^\ast$ is unbounded, through \eqref{vartransf} we have that there are points in $F(\Sigma_2^\ast)$ with positive but arbitrarily small values of $\rho$. This indicates that the subcritical region might contain points where $\rho=0$ which we miss in the above analysis due to working with the transformed variables, \eqref{vartransf}. Indeed if $\rho = 0$ in \eqref{Geq}, then
\begin{align*}
G' & = -G(G-\psi\ast\rho ) - kc = -(G^2 -G\psi\ast\rho + kc)\\
& = -\left(G-\frac{\psi\ast\rho - \sqrt{(\psi\ast\rho)^2-4kc}}{2}\right) \left(G-\frac{\psi\ast\rho + \sqrt{(\psi\ast\rho)^2-4kc}}{2}\right).  \label{Groots} \numberthis
\end{align*}
Noting that 
\begin{align*}
& \max \left(\psi\ast\rho - \sqrt{(\psi\ast\rho)^2-4kc}\right) = \betam-\sqrt{\betam^2-4kc}\\ 
& \quad <\betam+\sqrt{\betam^2-4kc}  = \min \left(\psi\ast\rho + \sqrt{(\psi\ast\rho)^2-4kc}\right),
\end{align*}
therefore, if 
\begin{align}
\label{gsrhois0}
G(0)>  \frac{\betam-\sqrt{\betam^2-4kc}}{2},
\end{align}
then $G(t)$ is bounded for all times. So, due to the balancing effect of the strong alignment, we have subcritical data for $\rho=0$ as well, which was not the case for $\Sigma_1$ in Section \ref{weakal}.
Owing to the above analysis and using $F$ as in \eqref{transmap1}, we define 
\begin{align}
\label{invregtransf2}
\Sigma_2 := F(\Sigma_2^*) \cup \left\{ (G,0): G> \frac{\betam - \sqrt{\betam^2 - 4kc}}{2}  \right\},  
\end{align}
which is an invariant region for $(G, \rho)$. See Figure \ref{subcrGrhostrong} for the shape of the subcritical region in $(G,\rho)$ coordinates.
\begin{figure}[ht!] 
\centering
\subfigure{\includegraphics[width=0.7\linewidth]{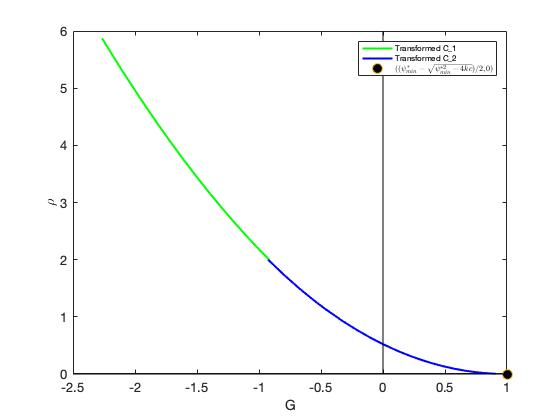}}
\caption{$C_1,C_2$ as transformed to original coordinates $(G,\rho)$.}
\label{subcrGrhostrong}
\end{figure}

\begin{remark}
We define the invariant region $\Sigma_2$ using the map \eqref{transmap1}. However, since $C_1,C_2$ are merely solution trajectories to a linear system, we can indeed denote $\Sigma_2$ through a function representing these solutions. In particular, there exists a Lipschitz continuous function $\xi_a$ such that 
\begin{align*}
\Sigma_2 = & \left\{ (G,\rho):  G> \xi_a(\rho) ,\ \rho\in [0,\infty) \right\}.
\end{align*}
\end{remark}

\begin{proposition}
\label{corsal}
Let $4kc<\betam^2$. Let initial conditions for \eqref{Grhosys} be such that $(G(0),\rho(0))\in\Sigma_2$. Then $(G(t),\rho(t))\in\Sigma_2$ for all $t>0$. In particular, $G(t),\rho(t)$ are bounded for any time.
\end{proposition}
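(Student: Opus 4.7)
My plan is to transfer the invariance statement of Proposition \ref{propmainsal} from the reformulated coordinates $(w,s)$ back to the original $(G,\rho)$ coordinates through the map $F(p,q)=(p/q,1/q)$, and then handle the degenerate stratum $\rho=0$ separately by a direct ODE argument. Because $\Sigma_2^\ast$ is unbounded along the trajectory $C_2$, its image under $F$ accumulates against the axis $\rho=0$, where $F$ is singular; the invariant region $\Sigma_2$ therefore has a dedicated stratum on that axis that the $F$-transfer cannot reach. I will accordingly split the argument into the case $\rho(0)>0$ and the case $\rho(0)=0$ and then verify that the two conclusions combine to yield the full statement.

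\textbf{The case $\rho(0)>0$.} Because \eqref{rhoeq} is linear in $\rho$, positivity of $\rho$ is preserved for as long as the solution exists, so the transformation $(w,s)=(G/\rho,1/\rho)=F^{-1}(G,\rho)$ remains smooth along the characteristic and reduces \eqref{Grhosys} to \eqref{mainodesys}. By hypothesis $(w(0),s(0))\in\Sigma_2^\ast$, and Proposition \ref{propmainsal} will then give $(w(t),s(t))\in\Sigma_2^\ast$ with $w,s$ bounded and $s(t)>0$ for all $t>0$. Pushing forward under $F$, I will obtain $(G(t),\rho(t))\in F(\Sigma_2^\ast)\subset\Sigma_2$; the bounds on $w$ and the positivity and boundedness of $s$ transfer directly into bounds on $G=w/s$ and $\rho=1/s$.

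\textbf{The case $\rho(0)=0$.} Uniqueness applied to \eqref{rhoeq} forces $\rho(t)\equiv0$ along the characteristic, so $(G(t),\rho(t))$ remains on the axis $\rho=0$. Equation \eqref{Geq} then collapses to the scalar, non-autonomous Riccati equation
\[G'=-G^2+G(\psi\ast\rho)-kc=-\bigl(G-\gamma_+(t)\bigr)\bigl(G-\gamma_-(t)\bigr),\]
with $\gamma_\pm(t)=\tfrac12\bigl(\psi\ast\rho\pm\sqrt{(\psi\ast\rho)^2-4kc}\bigr)$, where the coefficient still obeys $\betam\le\psi\ast\rho\le\betaM$ since the global density need not vanish. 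I will use that $s\mapsto\gamma_-(s)$ is decreasing while $s\mapsto\gamma_+(s)$ is increasing on $[\betam,\betaM]$ to conclude $\gamma_-(t)\le\tilde\gamma_-<\tilde\gamma_+\le\gamma_+(t)$. Evaluating the factored right-hand side at $G=\tilde\gamma_-$ then gives
\[G'=-\bigl(\tilde\gamma_--\gamma_+(t)\bigr)\bigl(\tilde\gamma_--\gamma_-(t)\bigr)\ge0,\]
so $\{G>\tilde\gamma_-\}$ is forward invariant. Since $G'<0$ whenever $G>\hat\gamma_+$, the same factorization confines $G$ to the bounded interval $[\tilde\gamma_-,\max(G(0),\hat\gamma_+)]$, hence $(G(t),0)\in\Sigma_2$ for all $t\ge0$.

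\textbf{Main obstacle.} The substantive analytical work is already contained in Proposition \ref{propmainsal}, so for this corollary the genuinely new point is the coherence of the two strata. The crucial input is that the unbounded trajectory $C_2$ satisfies $\tilde p/\tilde q\to\tilde\gamma_-$ as $q\to\infty$: this is the ratio coming from the dominant eigenvector of \eqref{mumaux} associated with the slower-decaying mode, and it identifies the closure of $F(\Sigma_2^\ast)$ with exactly the threshold $G=\tilde\gamma_-$ singled out by the scalar analysis above. Verifying this slope calculation is the only delicate consistency check; once it is in hand, the two cases above cover every initial datum in $\Sigma_2$ and together yield invariance and the claimed boundedness.
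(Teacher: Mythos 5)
Your proof follows essentially the same route as the paper: split on $\rho(0)>0$ (push the invariance of $\Sigma_2^\ast$ from Proposition \ref{propmainsal} through the map $F$) versus $\rho(0)=0$ (the scalar Riccati argument, which is the paper's discussion around \eqref{Groots} and \eqref{gsrhois0}). Two small remarks: the eigenvector slope check you flag as ``crucial'' ($\tilde p/\tilde q\to\tilde\gamma_-$ along $C_2$) is true but unnecessary for invariance, since the $\rho=0$ stratum is inserted directly into the definition \eqref{invregtransf2} rather than obtained as a limit of $F(\Sigma_2^\ast)$, and the two strata never interact because \eqref{rhoeq} preserves $\{\rho>0\}$ and $\{\rho=0\}$ separately; also the direction $C_2$ picks out as $t\to-\infty$ is the eigenvector of the \emph{faster}-decaying forward mode (eigenvalue $-\tilde\gamma_+$, slope $kc/\tilde\gamma_+=\tilde\gamma_-$), not the slower one.
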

\begin{proof}
Note that if $\rho(0)=0$ in \eqref{Grhosys}, then $\rho\equiv 0$. 
Also, if $(G(0),\rho(0))\in F(\Sigma_2^\ast)$, then as a direct result of Proposition \ref{propmainsal} and transformations \eqref{vartransf}, we conclude that $(G(t),\rho(t))\in F(\Sigma_2^\ast)$ for all $t>0$. Consequently, $\rho(0)>0\implies \rho(t)>0$ for further times. In particular, this justifies that we can handle the $\rho(0)=0\equiv \rho$ case separately.
From \eqref{gsrhois0} above, we conclude the result for this case. This finishes the proof to the Proposition.
\end{proof}

\subsection{Medium alignment}
\label{mediumal}
This is the case where the range of $\beta\in [\betam,\betaM]$ in \eqref{auxsys} is such that $\betam^2 < 4kc \leq \betaM^2$. We call this scenario the medium alignment case. Here, we will use analysis of both Sections \ref{weakal} and \ref{strongal}. The invariant region here is closed as in Section \ref{weakal} where $4kc>\betaM^2$. 
The procedure to calculate $p_1^{me},p_2^{me},p_3^{me}$ is very
similar to what it is in Sections \ref{weakal} and \ref{strongal}. So,
we omit the calculations. We get
\[
  p_1^{me} = p_1^{se} = \frac{\betaM}{c} - \sqrt{\frac{k}{c}}e^{\frac{\tan^{-1}\hat z}{\hat z}},
\]
\[
  C_1 = \left\{ (p,q): (\hat p(t),\hat q(t)),\ t\in [t_1^{se},0) \right\},
\]
with $\hat p,\hat q$ and $t_1^{se}$ as in Step 1 of Section
\ref{strongal}, and
\[
  p_2^{me} = \frac{\betam}{c}\left( 1+ e^{\frac{\pi}{\tilde z}} \right) - p_1^{me} e^{\frac{\pi}{\tilde z}}.
\]
Similar to the condition \eqref{clocon1} in Section \ref{weakal}, to have a closed invariant region, we need
$$
p_2^{me} > \frac{\betaM}{c}.
$$
We have the following Lemma. 
\begin{lemma}
\label{cloconmallem}
$p_2^{we}>\betaM/c$ if and only if 
\begin{align}
\label{clocon21}
& \sqrt{kc}e^{\frac{\tan^{-1}\hat z}{\hat z}} > (\betaM-\betam) \left( 1+ e^{-\frac{\pi}{\tilde z}} \right) .
\end{align}
\end{lemma}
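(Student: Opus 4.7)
The plan is to mimic verbatim the direct algebraic manipulation used in the proof of Lemma \ref{clocon1lem}, now with the values of $p_1$ and $p_2$ appropriate to the present setting. The statement as written refers to $p_2^{we}$, whose defining formula $p_2 = \tfrac{\betam}{c}(1+e^{\pi/\tilde z}) - p_1 e^{\pi/\tilde z}$ (together with $p_1 = \tfrac{\betaM}{c} - \sqrt{k/c}\,e^{\tan^{-1}(\hat z)/\hat z}$) has exactly the same symbolic form in the medium alignment regime of Section \ref{mediumal} as in the weak alignment regime of Section \ref{weakal}. Hence no new calculation is needed beyond repeating that of Lemma \ref{clocon1lem}.

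The key step is to substitute the closed-form value of $p_1$ into the formula for $p_2$ and simplify the difference $p_2 - \betaM/c$. Explicitly, I would compute
\begin{equation*}
p_2 - \frac{\betaM}{c} = \frac{\betam}{c}\bigl(1 + e^{\pi/\tilde z}\bigr) - \Bigl(\frac{\betaM}{c} - \sqrt{k/c}\,e^{\tan^{-1}(\hat z)/\hat z}\Bigr)e^{\pi/\tilde z} - \frac{\betaM}{c},
\end{equation*}
and group terms to obtain
\begin{equation*}
p_2 - \frac{\betaM}{c} = \sqrt{k/c}\,e^{\pi/\tilde z + \tan^{-1}(\hat z)/\hat z} - \frac{\betaM-\betam}{c}\bigl(1 + e^{\pi/\tilde z}\bigr).
\end{equation*}
Thus $p_2 > \betaM/c$ is equivalent to the positivity of the right-hand side. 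Dividing through by $e^{\pi/\tilde z}$ and multiplying by $c$ gives precisely the inequality \eqref{clocon21}. Both manipulations are reversible, yielding the desired equivalence.

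The only mildly subtle point is the interpretation of $e^{\tan^{-1}(\hat z)/\hat z}$: in the medium alignment regime $\betaM^2 \ge 4kc$, so $\hat z$ is purely imaginary, and the formula must be read through the analytic extension established in Remark \ref{p1selem2}. Once that identification is invoked, the algebra above is identical to the weak alignment computation, and the proof is essentially a one-line rearrangement. There is no real analytic obstacle; the content of the lemma is bookkeeping, recording the closure condition in a form that matches \eqref{clocon2} so that it can be combined with the sign analysis of Lemma \ref{clocon2lem} further down.
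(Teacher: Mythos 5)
Your proposal is correct and matches the paper's approach: the paper states only that the proof is ``very similar to that of Lemma \ref{clocon1lem},'' and your computation of $p_2 - \betaM/c$ is exactly the algebra carried out there, with the correct extra remark that $e^{\tan^{-1}(\hat z)/\hat z}$ must be read via the analytic extension of Remark \ref{p1selem2} since $\hat z$ is purely imaginary in the medium alignment regime.
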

The proof is very similar to that of Lemma \ref{clocon1lem}. We then have,
\begin{align*}
& C_2 = \left\{ (p,q): (\tilde p(t),\tilde q(t)),\ t\in [t_2^{we},0) \right\},
\end{align*}
with $\tilde p,\tilde q$ and $t_2^{we}$ as in Step 2 of Section \ref{weakal}.

We complete our construction by finding the point $p_3^{me}$ and the third segment of the boundary of the invariant region. The desired curve is the portion of the solution to \eqref{muMaux} with $\hat p(0) = p_2^{me}$ and $\hat q(0) = 1/c$. Using this, we obtain
\begin{align*}
& \hat p(t) = \frac{\betaM}{c} + \frac{\left(p_2^{me} - \frac{\betaM}{c}\right)}{\sqrt{\betaM^2 -4kc} } \left( \hat\gamma_+ e^{-\hat\gamma_- t} - \hat\gamma_- e^{-\hat\gamma_+ t} \right), \\
& \hat q(t) = \frac{1}{c} + \frac{\left( p_2^{me} - \frac{\betaM}{c}\right)}{\sqrt{\betaM^2 - 4kc }} \left( e^{-\hat\gamma_- t} - e^{-\hat\gamma_+ t} \right).
\end{align*}
$\hat q$ is unbounded and strictly increasing for $t<0$, hence, there exists a unique $t_3^{me}<0$ such that $\hat q(t^{me}_3 ) = 0$. 
$t_3^{me}$ is the unique solution of 
$$
e^{-\hat\gamma_+ t } - e^{-\hat\gamma_- t } = \frac{\hat\gamma}{cp_2^{me} - \betaM}.
$$
And $p_3^{me} := \hat p(t^{me}_3)$. Also,
$$
C_3 = \left\{ (p,q): (\hat p(t),\hat q(t)),\ t\in [t_3^{me} ,0) \right\}.
$$
And
\begin{align}
& \Sigma_3^\ast: = \text{open set enclosed by $C_1,C_2,C_3$ and $p$-axis}.    
\end{align}
Finally, we obtain the invariant region for the $(G,\rho)$ plane. Set
\begin{align}
\label{invregtransf3}
& \Sigma_3 = F(\Sigma_3^\ast).
\end{align}
We can now have the following Proposition.
\begin{proposition}
\label{cormal}
Let $\betam^2 < 4kc \leq \betaM$ and \eqref{clocon21} holds. Let initial conditions for \eqref{Grhosys} be such that $(G(0),\rho(0))\in\Sigma_3$. Then $(G(t),\rho(t))\in\Sigma_3$ for all $t>0$.
\end{proposition}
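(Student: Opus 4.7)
The plan is to follow the proof of Proposition \ref{propmainwal} almost verbatim, modifying only the identities of the arcs that make up the boundary. By \eqref{vartransf} and the bijectivity of the map $F$ in \eqref{transmap1} on $\{q>0\}$, the invariance of $\Sigma_3$ for \eqref{Grhosys} is equivalent to the invariance of $\Sigma_3^\ast$ for the transformed system \eqref{mainodesys}, so I work in the $(w,s)$-plane. As in the weak alignment case, one notes that $\rho(0)>0$ forces $\rho(t)>0$ for all $t>0$ along the characteristic (from \eqref{rhoeq}), so the transformation is legitimate for trajectories starting in $\Sigma_3^\ast$. Since $\Sigma_3$ is a bounded closed region not touching $\{\rho=0\}$ (its shape mirrors $\Sigma_1$), the degenerate case $\rho\equiv 0$ does not need to be treated separately here.

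I then show that a trajectory of \eqref{mainodesys} starting inside $\Sigma_3^\ast$ cannot meet any of the four pieces of $\partial\Sigma_3^\ast$: the arcs $C_1$, $C_2$, $C_3$, and the segment $\{(p,0): 0 < p < p_3^{me}\}$ of the $p$-axis. The axis segment is immediate: at any such point, $s' = w - s(\psi\ast\rho) = p > 0$, so the flow enters the region. For the three arcs, I eliminate $t$ from both \eqref{mainodesys} and the relevant auxiliary linear system to obtain the trajectory ODE \eqref{slesspara} for $s = s(p)$ and equations \eqref{eta1}--\eqref{eta2} for $\hat q$ and $\tilde q$. For $C_1$ and $C_3$, which are pieces of a trajectory of \eqref{muMaux}, the calculation in the proof of Proposition \ref{propmainwal} gives
\[
\frac{d(\hat q - s)}{dp} = \frac{cp-\psi\ast\rho}{k(1-c\hat q)(1-cs)}(\hat q - s) - \frac{\hat q(\betaM - \psi\ast\rho)}{k(1-c\hat q)},
\]
and using $\hat q \in [0, 1/c)$ together with $\psi\ast\rho \le \betaM$ one drops the last term with the favorable sign, so a Gronwall estimate on a short one-sided interval forces $\hat q - s$ to retain its sign, contradicting the assumed crossing. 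For $C_2$, which is a piece of a trajectory of the oscillatory system \eqref{mumaux} with $\tilde q > 1/c$, the analogous identity produces a term $\tilde q(\psi\ast\rho - \betam)/k(1-c\tilde q)$ that is nonpositive because $1-c\tilde q<0$ and $\psi\ast\rho \ge \betam$; the same Gronwall argument precludes a crossing.

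The geometric ingredient specific to the medium alignment regime is the closure condition \eqref{clocon21}: it guarantees $p_2^{me} > \betaM/c$, which is exactly what is needed for the arc $C_3$ — a trajectory of \eqref{muMaux} with $\betaM^2 \ge 4kc$ (a node, not a spiral) — to actually reach the $p$-axis at a positive $p_3^{me}$ and close off the region. Thus $\Sigma_3^\ast$ is a genuine hybrid: $C_1$ and $C_3$ come from the non-oscillatory system \eqref{muMaux} exactly as in Section \ref{strongal}, while $C_2$ comes from the oscillatory system \eqref{mumaux} exactly as in Section \ref{weakal}.

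The main obstacle I expect is not in any individual computation but in the bookkeeping near the two junction points $(p_1^{me}, 1/c)$ and $(p_2^{me}, 1/c)$, where $w' = k - kcs = 0$ so the trajectory is momentarily vertical and the parametrization $s = s(p)$ degenerates. As in the proof of Proposition \ref{propmainwal}, this is handled by applying the Gronwall comparison on one-sided $\epsilon$-neighborhoods of the relevant junction separately along each adjacent arc and letting $\epsilon\to 0^+$; continuity of the trajectory across the degenerate point together with sign preservation on each side yields the desired contradiction. Once all four boundary pieces are shown to be impenetrable, $\Sigma_3^\ast$ is invariant for \eqref{mainodesys}, and transporting this back through $F$ gives the invariance of $\Sigma_3$ for \eqref{Grhosys}, finishing the proof.
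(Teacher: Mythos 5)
Your proposal is essentially the same argument the paper intends: it declares that the proof of Proposition~\ref{cormal} is ``very similar to that of Proposition~\ref{corwal},'' which in turn rests on the Gronwall-type phase-plane comparison of Proposition~\ref{propmainwal}, and you have correctly reconstructed that chain for the hybrid region $\Sigma_3^\ast$ (arcs $C_1,C_3$ from the non-oscillatory system \eqref{muMaux}, arc $C_2$ from the oscillatory system \eqref{mumaux}, the $p$-axis segment handled by the sign of $s'$, and the closure condition \eqref{clocon21} playing the role of \eqref{clocon1}). One small inaccuracy worth noting: $\Sigma_3 = F(\Sigma_3^\ast)$ is open and is \emph{not} bounded in $\rho$ (the portion of $\Sigma_3^\ast$ approaching the $p$-axis maps to arbitrarily large $\rho$); what actually matters, and what you correctly use, is that $\Sigma_3^\ast$ is bounded above in $q$, so $\Sigma_3$ is bounded away from $\{\rho=0\}$ and the degenerate $\rho\equiv 0$ case never arises.
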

The proof is very similar to that of Proposition \ref{corwal}.

\subsection{Global smooth solutions}
We are now ready to prove Theorem \ref{gs1}.
\begin{proof}[Proof of Theorem \ref{gs1}]
Assume the hypothesis of Assertion (1). $\psiM<\g$ implies
$4kc>\betaM^2$ which means this case lies in the purview of Section
\ref{weakal}. Also, the admissible condition \eqref{eq:weakacc}
can be rewritten as \eqref{clocon2}. As a result, by Lemma
\ref{clocon2lem}, the invariant region $\Sigma_1$ is well defined.

Now suppose $(G_0(x),\rho_0(x))\in \Sigma_1$ for all
$x\in\mathbb{T}$. Then along any characteristic path \eqref{chpath},
the initial data to \eqref{Grhosys}, $(G(0),\rho(0))\in\Sigma_1$. By
Corollary \ref{corwal}, $(G(t),\rho(t))\in\Sigma_1$ for all $t>0$ and
$G(t),\rho(t)$ are bounded along all characteristic paths.
Finally, we can apply Theorem \ref{local} and conclude that $(\rho,u)$
are global-in-time smooth solutions to \eqref{mainsys}.

The proof to Assertions (2) and (3) in the Theorem is very similar only that in place of Corollary \ref{corwal} used above, we use Propositions \ref{corsal} and \ref{cormal} respectively.
\end{proof}

\subsection{Finite time breakdown}
This section is devoted to the proof of Theorem \ref{ftb1}. The procedure of construction is very similar to that in Sections \ref{weakal}, \ref{strongal} and \ref{mediumal}. The only difference is that we use the system \eqref{mumaux} wherever we used \eqref{muMaux} and vice-versa. As a result, $\betaM$ and $\betam$ interchange places in the relevant expressions. We only state the crucial steps and Propositions in obtaining the supercritical region.

\subsection*{Weak alignment ($4kc>\betaM^2$)}
We have 
\begin{align*}
& B_1 = \left\{ (p,q): (\tilde p(t),\tilde q(t)),\ t\in [t_1^{we},0) \right\},
\end{align*}
where $\tilde p,\tilde q$ are solutions to \eqref{mumaux} with initial conditions $\tilde p(0) = \tilde q(0) = 0$, and $t_1^{we}<0$ is the first negative time when $\tilde q(t_1^{we} )=1/c$. Also,
$$
p_1^{we}:=\tilde p(t_1^{we}) = \frac{\betam}{c} - \sqrt{\frac{k}{c}}e^{\frac{\tan^{-1}\tilde z }{\tilde z}}.
$$
We have the same bounds of $p_1^{we}$ as in Lemma \ref{p1astbound}, $p_1^{we}\in \sqrt{k/c}\,(-1,-(2-e))$.
Next, we have
\begin{align*}
& B_2 = \left\{ (p,q): (\hat p(t),\hat q(t)),\ t\in [t_2^{we},0] \right\},
\end{align*}
where $\hat p,\hat q$ are solutions to \eqref{muMaux} with initial conditions $\hat p(0) = p_1^{we}, \hat q(0) = 1/c$, and $t_2^{we}<0$ is the first negative time when $\hat q(t_2^{we}) = 1/c$. Also,
$$
p_2^{we}:=\hat p(t_2^{we}) = \frac{\betaM}{c}\left( 1+ e^{\frac{\pi}{\hat z}} \right) - p_1^{we} e^{\frac{\pi}{\hat z}}.
$$
Since $p_1^{we}<0$, we have that $p_2^{we}>\betam/c$ and we do not need any extra condition (like \eqref{clocon1}) to close the invariant region. Lastly, 
$$
B_3 = \left\{ (p,q): (\tilde p(t),\tilde q(t)),\ t\in [t_3^{we},0) \right\},
$$
where $\tilde p,\tilde q$ are solutions to \eqref{mumaux} with initial conditions $\tilde p(0) = p_2^{we}, \tilde q(0) = 1/c$, and $t_3^{we}<0$ is the first negative time when $\tilde q(t_3^{we}) = 0$. Here again, we do not need any extra condition (like \eqref{clocon2}) for invariant region to be well-defined. To see this, just interchange $\betaM$ and $\betam$ to see that the right hand side of \eqref{clocon2} becomes negative. Therefore, the condition holds trivially. Finally, we define
\begin{align*}
& \Delta_1^\ast = \text{unbounded open set outside $B_1\cup B_2\cup B_3$ with } q>0.
\end{align*}
We then have the following Proposition.
\begin{proposition}
\label{ftbpropmainwal}
Let $4kc>\betaM^2$. Let the initial conditions for
\eqref{mainodesys} be such that $(w(0),s(0))\in\Delta_1^\ast$. Then
there exists $ t_c>0$ such that $s(t_c) = 0$. Also, $w(t_c)<0$.
\end{proposition}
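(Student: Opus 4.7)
The plan is two-fold: (i) use a phase-plane comparison to show that $(w,s)$ is trapped in $\Delta_1^\ast$ for as long as it exists, and (ii) argue that this confinement forces $s$ to hit zero in finite time, with $w<0$ at that moment.

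\textbf{Invariance of $\Delta_1^\ast$.} I would mirror the Gronwall-type argument used in the proof of Proposition~\ref{propmainwal}. Each arc $B_i$ is a trajectory of one of the auxiliary linear systems \eqref{muMaux}, \eqref{mumaux}, and in pieces can be written as a graph $q=\phi_i(p)$ satisfying the corresponding phase-plane ODE from \eqref{etas}. Subtracting from the nonlinear trajectory equation \eqref{slesspara} yields an identity of the form
\[
\frac{d(\phi_i-s)}{dp}=\frac{cp-\psi\ast\rho}{k(1-c\phi_i)(1-cs)}(\phi_i-s)\pm\frac{\phi_i(\psi\ast\rho-\beta)}{k(1-c\phi_i)},
\]
with $\beta\in\{\betam,\betaM\}$ depending on which auxiliary system generates the arc. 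Since the construction of $B_i$ swaps the roles of $\betaM$ and $\betam$ relative to $C_i$, the source term now has the opposite sign to that in Proposition~\ref{propmainwal}, and Gronwall's inequality traps $(w,s)$ strictly outside each $B_i$, i.e.\ inside $\Delta_1^\ast$.

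\textbf{Finite-time vanishing of $s$.} Assume for contradiction that $s(t)>0$ for all $t\geq 0$. Using the a priori bounds $\betam\leq \psi\ast\rho\leq \betaM$ in \eqref{mainodesys}, one first shows that the orbit stays bounded in $\Delta_1^\ast$ for all time: $s$ cannot blow up while $w$ is bounded, and $w'=k(1-cs)$ is bounded whenever $s$ is bounded. The two equilibria of the guiding linear flows, $(\betaM/c,1/c)$ and $(\betam/c,1/c)$, are both interior to the bounded complement of $\Delta_1^\ast$, so $\Delta_1^\ast$ contains no equilibrium of either auxiliary system and the bounded orbit is forced to rotate. Each revolution must transit the lower-left portion of $\Delta_1^\ast$, where $w\leq -\varepsilon<0$ and $s\leq 1/c$; there $s'\leq w\leq -\varepsilon$, so $s$ drops by a uniform positive amount per revolution of bounded-above duration, reaching $0$ in finite time and contradicting the assumption.

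\textbf{Sign of $w(t_c)$ and main obstacle.} On a left neighborhood of $t_c$ we have $s'<0$, hence $w<s\,\psi\ast\rho$, which in the limit gives $w(t_c)\leq 0$. Equality would force $(w(t),s(t))\to(0,0)$; but the origin lies on $B_1$, contradicting the strict separation established by the Gronwall argument in Step~1. Hence $w(t_c)<0$. The delicate point is the rotation argument in Step~2: because $\psi\ast\rho$ is determined by the global spatial solution and not by $(w,s)$ along a single characteristic, Poincar\'e--Bendixson does not apply directly, and one must derive uniform lower bounds on the angular velocity and uniform upper bounds on the transit time from the pointwise bounds $\betam\leq\psi\ast\rho\leq\betaM$ and the boundedness of the orbit, through a case analysis over the subregions of $\Delta_1^\ast$.
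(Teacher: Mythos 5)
Your Step 1 (invariance of $\Delta_1^\ast$) mirrors the paper's argument and is fine. The issue is Step 2.

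Your rotation argument does not match the actual geometry of the flow, and it also has unfilled holes that you yourself flag. Concretely: consider the subregion $R_L:=\{(p,q):\ 0<q<1/c,\ p<B_1(q)\}$ to the left of $B_1$ and below the line $q=1/c$. On the top edge ($q=1/c$, $p<p_1^{we}$) one has $s'=p-\psi\ast\rho/c\le p_1^{we}-\betam/c<0$, so trajectories cross into $R_L$ and cannot return upward; the right edge $B_1$ is impassable by the invariance argument; and throughout $R_L$ one has $p<B_1(q)<0$, hence $s'=w-s\,\psi\ast\rho<0$ strictly. So once the orbit enters $R_L$ it is trapped there and cannot execute any further revolution: $s$ is strictly decreasing and $w$ is strictly increasing until the orbit meets $q=0$. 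The ``bounded orbit is forced to rotate / drops a uniform amount per revolution'' picture is therefore wrong, and the claim that $s$ ``drops by a uniform positive amount per revolution'' is unsupported even in your own framework, since during a putative revolution $s$ would also rise again. The boundedness of the orbit under the contradiction hypothesis is also asserted circularly rather than proved.

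The paper's intended mechanism, which you should use instead, is purely a sign/monotonicity argument: the signs of $w'$ and $s'$ funnel every orbit of $\Delta_1^\ast$ into $R_L$ in finite time (from the upper region $w'<0$ drives $w$ left until $s'<0$ pushes the orbit down through the left part of $q=1/c$; from the right region $w'>0$ and eventually $s'>0$ push it up into the upper region). Once inside $R_L$, one rules out the orbit lingering for all time: $w$ is increasing and bounded above by $0$, so $w\to w^*\le0$; if $w^*<0$ then $s'\le w< w^*/2<0$ eventually forces $s=0$ in finite time, while if $w^*=0$ then $w'=k(1-cs)\to0$ would force $s\to1/c$, contradicting that $s$ is decreasing below $1/c$ with $s'< -\betam s<0$. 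Since the exit from $R_L$ through $q=0$ occurs at $p<B_1(q)\le0$, one automatically gets $w(t_c)<0$; this is the paper's ``second quadrant'' observation, and it makes your separate limiting argument for the sign of $w(t_c)$ unnecessary.
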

\begin{proof}
The proof that the trajectory $(w,s)$ does not touch $\Delta_1^\ast$ is very similar to that of Proposition \ref{propmainwal}. And from the signs of $w',s'$, it can be concluded that the trajectory hits $q=0$ line for some time, $t_c>0$, in the second quadrant. Hence, $w(t_c)<0$. 
\end{proof}
Due to $\Delta_1^\ast$ being unbounded, we have supercritical region for points where $\rho=0$ as well. Here, we have all such points in the supercritical region.
\begin{equation}
\label{ftbinvregtransf}
\Delta_1 = F(\Delta_1^\ast)\cup \left\{ (G,0): G\in\mathbb{R} \right\}.
\end{equation}
To see the inclusion of the points $(G,0)$, we prove a Lemma. 
\begin{lemma}
\label{rho0points}
Let a function $h$ satisfy
$$
h' = -(h^2 -a(t)h + \omega),
$$
where $a$ is a bounded function and $\omega$ is a constant with $ 4\omega - \sup a^2>0 $. Then for any initial data $h(0)$, there exists $t_c>0$ such that $\lim_{t\to t_c^-} h(t) = -\infty$.
\end{lemma}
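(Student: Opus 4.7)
\textbf{Proof plan for Lemma \ref{rho0points}.}

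The driving observation is that the hypothesis $4\omega-\sup a^2>0$ forces the quadratic $h\mapsto h^2-a(t)h+\omega$ to have strictly negative discriminant for every admissible $t$. Writing
\[
h^2-a(t)h+\omega=\left(h-\tfrac{a(t)}{2}\right)^2+\omega-\tfrac{a(t)^2}{4}\ \geq\ \omega-\tfrac{\sup a^2}{4}\ =:\ \epsilon\ >\ 0,
\]
I get a \emph{uniform} positive lower bound $\epsilon$ for the right-hand side $-(h^2-a(t)h+\omega)$ in absolute value, independent of $h$ and $t$. Consequently $h'(t)\leq -\epsilon$ along any solution, so $h$ is strictly decreasing at a rate bounded away from zero, and in particular $h(t)\leq h(0)-\epsilon t$ as long as the solution exists.

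Next I would exploit the quadratic growth of $-(h^2-a(t)h+\omega)$ for large $|h|$. With $A:=\sup|a|$, choose any threshold $M>0$ so large that $A|h|+|\omega|\leq \tfrac{1}{2}h^2$ whenever $|h|\geq M$; for instance $M:=A+\sqrt{A^2+2|\omega|}$ suffices. Then for every $t$ at which $h(t)\leq -M$,
\[
h'(t)\ =\ -\bigl(h(t)^2-a(t)h(t)+\omega\bigr)\ \leq\ -\tfrac{1}{2}h(t)^2.
\]
By the previous step there is a first time $T\geq 0$ at which $h(T)=-M$, and by continuity $h(t)\leq -M$ for $t\geq T$ up to any potential blowup.

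Finally I would close the argument with a standard Riccati comparison. Set $z(t):=-1/h(t)$, which is well defined and positive on $[T,t_c)$ where $[T,t_c)$ is the maximal interval on which $h$ exists and stays negative. Differentiating and using $h'\leq -h^2/2$ gives
\[
z'(t)\ =\ \frac{h'(t)}{h(t)^2}\ \leq\ -\tfrac{1}{2},
\]
so $z(t)\leq z(T)-\tfrac{1}{2}(t-T)$. Since $z>0$ is required, this forces $t_c\leq T+2z(T)=T-2/h(T)<\infty$, and at $t_c$ we have $z(t_c^-)=0$, i.e.\ $\lim_{t\to t_c^-}h(t)=-\infty$, as claimed.

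The only nontrivial step is choosing the threshold $M$ so that the bilinear and constant pieces are absorbed by half of the quadratic; everything else is a direct consequence of the coercivity estimate $h^2-a(t)h+\omega\geq \epsilon$ and the standard Riccati blow-up. I do not expect any genuine obstacle, but some care is needed to verify that $h$ remains negative (not just bounded above) on $[T,t_c)$, which is immediate from monotonicity of $h$ since $h(T)=-M<0$.
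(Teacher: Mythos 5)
Your proof is correct and follows essentially the same two-step strategy as the paper: first use the uniform coercivity $h'\le -(\omega-\tfrac14\sup a^2)<0$ to push $h$ below a fixed negative threshold, then invoke a Riccati-type comparison to force finite-time blowup. The only cosmetic difference is the choice of threshold and comparison quadratic (you absorb the linear and constant terms into $\tfrac12 h^2$; the paper compares with $-h(h-\inf a)$ after $h$ drops below $\min\{0,\inf a\}$), but the argument is the same in substance.
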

\begin{proof}
First, observe that 
\begin{align*}
h' & = -h^2 + a h - \omega\\
 & = -\left( h-\frac{a}{2}\right)^2 - \frac{4\omega-a^2}{4}\\
 &\leq - \frac{4\omega-\sup a^2}{4}<0.
\end{align*}
Therefore, $h$ is strictly decreasing and can achieve any negative number. In particular,  
for some $t_0\geq 0$, $h(t_0)<\min\{0,\inf a\}$. Consequently for $t>t_0$,
\begin{align*}
h' & < -h( h - \inf a).
\end{align*}
Since $h(t_0)<\min\{0,\inf a\}$, it admits a Riccati type blowup. Indeed on comparing above differential inequality with an equality, we obtain $\lim_{t\to t_c^-} h(t) = -\infty$ for some $t_c < t_0 + (-h(t_0))^{-1}$.
\end{proof}

Owing to this Lemma, Proposition \ref{ftbpropmainwal} and transformation \eqref{vartransf}, we have the following Corollary.
\begin{corollary}
\label{ftbcorwal}
Let $4kc>\betaM^2$. Let initial conditions for \eqref{Grhosys} be such
that $(G(0),\rho(0))\in\Delta_1$. Then there exists $ t_c >0$ such that 
$$
\lim_{t\to t_c^-} G (t,x_c)  = -\infty,\quad 
 \lim_{t\to t_c^-} \rho (t,x_c) = \infty\ \text{or } 0,
$$
for some $x_c\in \mathbb{T}$.
\end{corollary}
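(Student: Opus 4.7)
The plan is to split the supercritical set $\Delta_1$ into its two natural pieces --- the image $F(\Delta_1^*)$ of the region analyzed in Proposition \ref{ftbpropmainwal}, and the ``vacuum'' line $\{(G,0):G\in\mathbb{R}\}$ adjoined in the definition \eqref{ftbinvregtransf} --- and to handle each by its designated tool. The transformation \eqref{vartransf} already does the heavy lifting in the first case, and Lemma \ref{rho0points} takes care of the second.

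In the first case, I would pick $x_c\in\mathbb{T}$ with $(G_0(x_c),\rho_0(x_c))\in F(\Delta_1^*)$, so in particular $\rho_0(x_c)>0$. Along the characteristic \eqref{chpath} issuing from $x_c$, the change of variables \eqref{vartransf} is valid so long as $\rho$ stays positive, and by the definition of $F$ it transports the initial datum into $\Delta_1^*$. Proposition \ref{ftbpropmainwal} then yields a finite $t_c>0$ with $s(t_c)=0$ and $w(t_c)<0$; since the invariance argument underlying that proposition keeps the trajectory in $\Delta_1^*\subset\{q>0\}$ on $[0,t_c)$, the change of variables is legitimate throughout. Reverting to $(G,\rho)$ gives $\rho(t,x(t))=1/s(t)\to+\infty$ as $t\to t_c^-$ and, by continuity of $w$ with $w(t_c)<0$, $G(t,x(t))=w(t)/s(t)\to-\infty$.

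In the second case, I would pick $x_c$ with $\rho_0(x_c)=0$. The mass equation \eqref{rhoeq} then forces $\rho(t,x(t))\equiv 0$ along this characteristic, so the alternative $\lim\rho=0$ is automatic. The $G$-equation \eqref{Geq} collapses to
\[ G' = -G^2 + a(t)\,G - kc, \qquad a(t):=(\psi*\rho)(t,x(t)), \]
where $0\le a(t)\le\betaM$ thanks to the bound \eqref{eq:psibounded} and the conservation of total mass $\int_{\mathbb{T}}\rho\,dx=c$. The weak-alignment hypothesis $4kc>\betaM^2$ therefore guarantees $4kc-\sup a^2>0$, so Lemma \ref{rho0points} produces a blowup time $t_c>0$ at which $G(t,x_c)\to-\infty$.

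The main obstacle is essentially presentational: one must verify in Case 1 that the ODE \eqref{mainodesys} genuinely exists on the full interval $[0,t_c)$ with $s>0$ so that the transformation is reversible, and this is precisely the invariance statement embedded in Proposition \ref{ftbpropmainwal}. The dichotomy ``$\rho\to\infty$ or $\rho\to 0$'' in the conclusion is then a direct reflection of which of the two pieces of $\Delta_1$ the initial datum sits in, and the divergence $u_x=G-\psi*\rho\to-\infty$ follows at once since $\psi*\rho$ remains bounded.
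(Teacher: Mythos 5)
Your proof is correct and follows essentially the same route as the paper: split $\Delta_1$ into $F(\Delta_1^\ast)$ and the vacuum line $\{(G,0)\}$, apply Proposition~\ref{ftbpropmainwal} together with the transformation~\eqref{vartransf} to the former to get $\rho\to\infty$ and $G\to-\infty$, and apply Lemma~\ref{rho0points} with $\omega=kc$ and $a=\psi\ast\rho\le\betaM$ to the latter. Your added remarks (that the invariance of $\Delta_1^\ast\subset\{q>0\}$ keeps the transformation legitimate on $[0,t_c)$, and that $u_x\to-\infty$ since $\psi\ast\rho$ stays bounded) are accurate and merely make explicit details the paper leaves implicit.
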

\begin{proof}
Note that if $\rho(0)=0$ in \eqref{Grhosys}, then $\rho\equiv 0$ and from \eqref{Geq},
\[
G' = -(G^2 - G\psi\ast\rho +kc).
\]
We can have $h=G, a(t)=\psi\ast\rho, \omega=kc$ in Lemma \ref{rho0points}  and the hypothesis is satisfied. Hence, for some $t_c>0$, $\lim_{t\to t_c^-}G(t) = -\infty$ irrespective of $G(0)$.
Also, if $\rho(0)>0$ with $(G(0),\rho(0))\in F(\Delta_1^\ast)$, then from Proposition \ref{ftbpropmainwal} and transformations \eqref{vartransf}, we have the existence of $t_c>0$ such that $\lim_{t\to t_c^-} \rho(t) = \infty$ and $\lim_{t\to t_c^-} G(t) = -\infty$. This finishes the proof to the Corollary.
\end{proof}

\subsection{Strong and medium alignment ($4kc\le\betaM^2$)}
These two cases are similar, so we state the construction together. We have
$$
B_1 = \left\{ (p,q): (\tilde p(t),\tilde q(t)),\ t\in [t_1^{se},0) \right\},
$$
where $\tilde p,\tilde q$ are solutions to \eqref{mumaux} with initial conditions $\tilde p(0) = \tilde q(0) = 0$, and $t_1^{se}<0$ is the first negative time (for medium alignment) and the unique time (for strong alignment), when $\tilde q(t_1^{se} )=1/c$.
Next,
$$
B_2 = \left\{ (p,q): (\hat p(t),\hat q(t)),\ t\in (-\infty,0] \right\},
$$
where $\hat p,\hat q$ are solutions to \eqref{muMaux} with initial conditions $\hat p(0) = p_1^{we}, \hat q(0) = 1/c$.
We can now define $\Delta_2^\ast$.
$$
\Delta_2^\ast = \text{unbounded open set surrounded by } B_1, B_2, \{(p,0): p<0\} \text{ on 3 sides} .
$$
Here , we have
\begin{equation}
\label{ftbinvregtransf2}
\Delta_2 = F(\Delta_2^\ast)\cup \left\{ (G,0): G< \frac{ \betaM - \sqrt{\betaM^2- 4kc }}{2} \right\}.
\end{equation}
Indeed when $\rho=0$, from \eqref{Groots} we have that if 
\[
  G(0) < \min \frac{\psi\ast\rho-\sqrt{(\psi\ast\rho)^2-4kc}}{2}
  = \frac{\betaM-\sqrt{\betaM^2-4kc}}{2},
\]
then $G(t)\to-\infty$ in finite time.

We have the following Proposition.
\begin{proposition}
\label{ftbcorsal}
Let $4kc\leq\betaM^2$. Let initial conditions for \eqref{Grhosys} be
such that $(G(0),\rho(0))\in\Delta_2$. Then there exists $t_c >0$ such that 
$$
\lim_{t\to t_c^-} G (t,x_c)  = -\infty,\quad 
 \lim_{t\to t_c^-} \rho (t,x_c) = \infty\ \text{or } 0,
$$
for some $x_c\in \mathbb{T}$.
\end{proposition}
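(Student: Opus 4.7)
The plan is to mirror the two-case strategy of Corollary \ref{ftbcorwal}, distinguishing whether $\rho(0)=0$ or $\rho(0)>0$ along the characteristic path \eqref{chpath} emanating from some $x_c\in\mathbb{T}$.

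\emph{Generic case.} If $\rho(0)>0$ with $(G(0),\rho(0))\in F(\Delta_2^\ast)$, the transformation \eqref{vartransf} sends the initial data to $(w(0),s(0))\in\Delta_2^\ast$ and recasts \eqref{Grhosys} as \eqref{mainodesys}. I would first prove the strong/medium-alignment counterpart of Proposition \ref{ftbpropmainwal}: the trajectory $(w,s)$ stays in $\Delta_2^\ast$ and exits only through the segment $\{s=0,\ w<0\}$ at some finite $t_c>0$. Invariance under the boundary pieces $B_1$ and $B_2$ is established by the same phase-plane comparison and Gronwall-type contradiction used in the proof of Proposition \ref{propmainwal}, since $B_1$ and $B_2$ are trajectories of the auxiliary systems \eqref{mumaux} and \eqref{muMaux} respectively. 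Finite-time exit through $\{s=0\}$ follows from a sign analysis inside $\Delta_2^\ast$: $w<0$ is preserved (any crossing of $\{w=0\}$ would force the trajectory to cross $B_2$ or $B_1$, which is forbidden), whence $s'=w-s\psi\ast\rho\leq w\leq w_0<0$ for some constant $w_0$, forcing $s(t)\to 0^+$ in time at most $s(0)/|w_0|$. Reverting \eqref{vartransf} gives $\rho(t,x_c)\to+\infty$ and $G(t,x_c)\to-\infty$ as $t\to t_c^-$.

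\emph{Degenerate case.} If $\rho(0)=0$, equation \eqref{rhoeq} gives $\rho(t,x(t))\equiv 0$, and \eqref{Geq} reduces to
\begin{equation*}
G'=-Q_{a(t)}(G),\qquad Q_a(G):=G^2-aG+kc,\qquad a(t):=(\psi\ast\rho)(t,x(t))\in[\betam,\betaM].
\end{equation*}
The hypothesis $(G(0),0)\in\Delta_2$ translates to $G(0)<\tfrac{1}{2}(\betaM-\sqrt{\betaM^2-4kc})$. I would verify that this threshold equals $\inf_{a\in[2\sqrt{kc},\betaM]}\tfrac{1}{2}(a-\sqrt{a^2-4kc})$ (using that $a\mapsto\tfrac{1}{2}(a-\sqrt{a^2-4kc})$ is monotone decreasing on $[2\sqrt{kc},\infty)$) and that whenever $a<2\sqrt{kc}$ the quadratic $Q_a$ is strictly positive everywhere since its discriminant is negative. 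Hence $Q_{a(t)}(G(t))>0$ throughout, $G$ is strictly decreasing, and once $G\leq-2\betaM$ one has $Q_{a(t)}(G)\geq G^2/2$, so $G'\leq -G^2/2$. A Riccati comparison then yields $\lim_{t\to t_c^-}G(t)=-\infty$ in finite time.

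\emph{Main obstacle.} The principal difficulty lies in the degenerate case: the natural comparison tool Lemma \ref{rho0points} used in Corollary \ref{ftbcorwal} is inapplicable here, since its standing hypothesis $4\omega>\sup a^2$ becomes $4kc>\betaM^2$, exactly the opposite of the regime under consideration. The substitute argument must exploit the precise threshold $\tfrac{1}{2}(\betaM-\sqrt{\betaM^2-4kc})$, which is engineered so that $G(0)$ lies strictly below every real smaller root of $Q_a$ uniformly in admissible $a$; only then does one recover enough monotone decay of $G$ to activate the Riccati blowup mechanism.
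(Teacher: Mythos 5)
Your two-case decomposition mirrors what the paper does (the paper only explicitly addresses the $\rho(0)=0$ case via \eqref{Groots} and points to Proposition \ref{ftbpropmainwal} for the rest). Your degenerate-case argument is correct and in fact fills in details the paper glosses over --- the monotonicity of $a\mapsto\tfrac12\bigl(a-\sqrt{a^2-4kc}\bigr)$, the strict positivity of $Q_a$ when $a<2\sqrt{kc}$, and the Riccati step --- and you correctly identify that Lemma \ref{rho0points} is inapplicable here because its hypothesis $4\omega>\sup a^2$ fails.

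The generic case, however, has a genuine gap in the sign analysis. You claim that ``$w<0$ is preserved'' inside $\Delta_2^\ast$, but this is false: the boundary piece $B_2$ is the backward trajectory of the node \eqref{muMaux} from $(p_1^{we},1/c)$, and since $\betaM^2\geq 4kc$ this escapes to infinity along the slow eigendirection $(\hat\gamma_-,1)$ with $\hat\gamma_->0$, so $B_2$ crosses $\{p=0\}$ and $\Delta_2^\ast$ contains points with $w>0$ for $q$ large. (This is consistent with the appended slab in \eqref{ftbinvregtransf2}, namely $\{(G,0):G<\hat\gamma_-\}$, which contains $G>0$ and is precisely the $\rho\to 0$ limit of this part of the boundary.) Hence a trajectory can start with $w(0)>0$. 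Moreover, even when $w<0$, the asserted uniform bound $w\leq w_0<0$ is not available: in the strip $\{0<s<1/c\}$ one has $w'=k(1-cs)>0$, so $w$ increases toward $0$ as the trajectory approaches the corner $(0,0)$ of $\Delta_2^\ast$ (where $B_1$ is tangent to the $p$-axis), and your bound $s(t)\leq s(0)-|w_0|t$ does not follow. The correct route is a little more delicate: once the trajectory is in $\{s<1/c\}$ (which happens in finite time since $s>1/c$ forces $w'<0$ hence $w\to-\infty$ hence $s'\leq w\to-\infty$), the boundary of $\Delta_2^\ast$ there is $B_1\subset\{p<0\}$, so $w<0$; there $w'\geq k(1-c\,s(t_*))>0$ with $s$ non-increasing, so $w$ would hit $0$ in time at most $|w(t_*)|/[k(1-c\,s(t_*))]$, which invariance forbids while $s>0$; hence $s$ must reach $0$ first, in finite time. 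Your Riccati/Gronwall mechanism is the right instinct but must be applied after this preliminary argument, not from time zero.
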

We now give the proof to Theorem \ref{ftb1}.
\begin{proof}[Proof of Theorem \ref{ftb1}]
Assume the hypothesis of Assertion (1). $\psiM<\g$ implies
$4kc>\betaM^2$ which means this case lies in the purview of Section
\ref{weakal}. Now suppose $(G_0(x_0),\rho_0(x_0))\in \Delta_1$ for
some $x_0\in\mathbb{T}$. Then consider the dynamics \eqref{Grhosys}
along the characteristic path \eqref{chpath} with $x(0)=x_0$,
and apply Corollary \ref{ftbcorwal}. We have
$$
\lim_{t\to t_c^-} G (t,x_c) = -\infty,\quad  \lim_{t\to t_c^-} \rho (t,x_c)  = \infty\ \text{or } 0.
$$
This proves Assertion (1).
The proof to Assertion (2) in the Theorem is very similar only that in place of Corollary \ref{ftbcorwal} used above, we use Proposition \ref{ftbcorsal}.
\end{proof}

\section{The EPA system with weakly singular alignment influence}
\label{weaklysing}
In this section, we tackle the case when $\psi\in L^1_+(\mathbb{T})$. In
particular, $\psi$ need not be bounded as was assumed in Section
\ref{analysis1}. This type of alignment forces is known as
\emph{weakly singular}.

\subsection{Improved bounds on $\psi\ast\rho$}
The main difficulty of applying our theory in Section
\ref{analysis1} to the EPA system with weakly singular alignment
influence is that the bounds on $\psi\ast\rho$ in
\eqref{eq:psibounded} no longer hold.
A natural replacement of the bounds is \eqref{eq:psiweaksing}, which
we recall here:
\begin{equation}
\label{psistarcrudebounds}
\|\psi\|_{L^1}\rhom\leq\psi\ast\rho\leq\|\psi\|_{L^1}\rhoM.
\end{equation}
A major issue arises that the bounds depend on the unknown $\rho$. If
we were to pick $\rhoM$ and $\rhom$ and use the bounds
\eqref{psistarcrudebounds} in place of
$\betaM,\betam$ as in Section \ref{analysis1}, then 
the invariant region $\Sigma$ need to satisfy
\begin{equation}\label{eq:SigmaL}
\inf\{\rho: (G,\rho)\in\Sigma \}\geq \rhom, \quad\sup\{\rho: (G,\rho)\in \Sigma\}\leq \rhoM,
\end{equation}
in order to keep \eqref{psistarcrudebounds} valid.
However, after detailed analysis, it turns out that
there are no  
values of $\rhoM$ and $\rhom$ with which the constructed invariant
region $\Sigma$ satisfies \eqref{eq:SigmaL}.

To overcome this difficulty, we make improvements to the bounds
\eqref{psistarcrudebounds} 
leveraging the additional property on $\rho$
\[\int_{\mathbb{T}}\rho(t,x)\,dx=c,\quad\forall~t\ge0.\]

In particular, we have the following key Lemma.
\begin{lemma}
  \label{psistarnewboundslem}
  Let $\rho$ be any nonnegative, periodic function satisfying
  \begin{equation}\label{eq:constraint}
    \int_{\mathbb{T}}\rho(x)\,dx=c,\quad\text{and}\quad
    \rhom\le\rho(x)\le\rhoM,\,\,\forall~x\in\mathbb{T}.
  \end{equation}
Let $\psi\in L^1_+(\mathbb{T})$. Then there exist two non-negative
constants $\gamma_1$ and $\gamma_2$ such that
\begin{equation}\label{psistarnewbounds}
  \rhom\|\psi\|_{L^1}+(\rhoM-\rhom)\gamma_1
  \le\int_{\mathbb{T}} \psi(x-y)\rho(y)\,dy\le
  \rhoM\|\psi\|_{L^1}-(\rhoM-\rhom)\gamma_2,
\end{equation}
for any $x\in\mathbb{T}$.
Moreover, $\gamma_1$ and $\gamma_2$ can be expressed by
\[\gamma_1=\int_{\frac{\rhoM-c}{\rhoM-\rhom}}^1\psi^*(y)\,dy,\quad
\gamma_2=\int_{\frac{c-\rhom}{\rhoM-\rhom}}^1\psi^*(y)\,dy,\]
where $\psi^* : (0,1]\to\mathbb{R}$ is the decreasing rearrangement of $\psi$ on $\mathbb{T}$.
\end{lemma}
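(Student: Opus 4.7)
The plan is to reduce the problem to bounding a normalized convolution and then apply a bathtub-type argument via layer-cake. Fix an arbitrary $x\in\mathbb{T}$ and set $\psi_x(y):=\psi(x-y)$; since $\mathbb{T}$ is a torus, translation and reflection preserve $L^1$-norms and distribution functions, so $\psi_x$ shares with $\psi$ the same decreasing rearrangement $\psi^*$ and the same $L^1$-norm. Consequently any bound we derive in terms of $\psi^*$ will be independent of $x$. I would then introduce
\[\sigma(y):=\frac{\rho(y)-\rho_{\min}}{\rho_{\max}-\rho_{\min}},\qquad \alpha:=\frac{c-\rho_{\min}}{\rho_{\max}-\rho_{\min}}\in[0,1],\]
so that $0\le\sigma\le 1$ with $\int_{\mathbb{T}}\sigma=\alpha$ (the degenerate case $\rho_{\min}=\rho_{\max}$ forces $\rho\equiv c$ and is trivial). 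The decomposition
\[\int_{\mathbb{T}}\psi_x(y)\rho(y)\,dy=\rho_{\min}\|\psi\|_{L^1}+(\rho_{\max}-\rho_{\min})\int_{\mathbb{T}}\psi_x(y)\sigma(y)\,dy\]
reduces everything to proving the two-sided estimate $\int_{1-\alpha}^{1}\psi^*(y)\,dy \le \int_{\mathbb{T}}\psi_x\sigma\,dy \le \int_{0}^{\alpha}\psi^*(y)\,dy$.

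To establish these bounds I will use the layer-cake representation $\int_{\mathbb{T}}\psi_x\sigma\,dy=\int_0^\infty\bigl(\int_{E_t}\sigma\bigr)\,dt$ with $E_t:=\{y\in\mathbb{T}:\psi_x(y)>t\}$ whose measure $m(t):=|E_t|=|\{y\in(0,1]:\psi^*(y)>t\}|$ is the distribution function common to $\psi$ and $\psi_x$. The constraints $0\le\sigma\le 1$ and $\int\sigma=\alpha$ yield the pointwise-in-$t$ estimates
\[\max\bigl(0,\,\alpha-1+m(t)\bigr)\ \le\ \int_{E_t}\sigma\ \le\ \min\bigl(m(t),\,\alpha\bigr),\]
where the upper bound uses $\sigma\le 1$ together with $\int_{E_t}\sigma\le\int\sigma=\alpha$, and the lower bound comes from $\int_{E_t^c}\sigma\le|E_t^c|=1-m(t)$ combined with $\int_{E_t}\sigma=\alpha-\int_{E_t^c}\sigma$. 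Integrating in $t$ and splitting each integral at $t=\psi^*(\alpha)$ and $t=\psi^*(1-\alpha)$ respectively, a short Fubini computation produces the key identities
\[\int_0^\infty \min(m(t),\alpha)\,dt=\int_0^\alpha\psi^*(y)\,dy,\qquad \int_0^\infty\max(0,\alpha-1+m(t))\,dt=\int_{1-\alpha}^1\psi^*(y)\,dy.\]

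To finish, I insert these estimates back into the decomposition of $\int\psi_x\rho$. The lower bound is already in the claimed form $\rho_{\min}\|\psi\|_{L^1}+(\rho_{\max}-\rho_{\min})\gamma_1$, since $1-\alpha=(\rho_{\max}-c)/(\rho_{\max}-\rho_{\min})$. For the upper bound I use $\|\psi\|_{L^1}=\int_0^\alpha\psi^*+\int_\alpha^1\psi^*$ to rewrite
\[\rho_{\min}\|\psi\|_{L^1}+(\rho_{\max}-\rho_{\min})\int_0^\alpha\psi^*\,dy =\rho_{\max}\|\psi\|_{L^1}-(\rho_{\max}-\rho_{\min})\gamma_2,\]
matching the stated form exactly. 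The only step I expect to require care is the Fubini bookkeeping identifying $\int_0^\infty\min(m(t),\alpha)\,dt$ with $\int_0^\alpha\psi^*$ and its twin, but this is essentially the bathtub principle; the constraint $\int\rho=c$ enters only through $\int\sigma=\alpha$. An equivalent route would be to invoke the Hardy--Littlewood rearrangement inequality directly and then observe that among decreasing $[0,1]$-valued functions with integral $\alpha$ the extremizers of $\int_0^1\psi^*\cdot(\cdot)\,dy$ are $\chi_{[0,\alpha]}$ (maximum) and $\chi_{[1-\alpha,1]}$ (minimum), which a one-line comparison argument confirms using monotonicity of $\psi^*$.
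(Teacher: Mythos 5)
Your proof is correct, and the core mechanism (the bathtub principle: the extremizer of a linear functional $\int\psi_x\rho$ over $\{\rho:\rho_{\min}\le\rho\le\rho_{\max},\ \int\rho=c\}$ is a two-level step function aligned with the level sets of $\psi_x$) is the same as the paper's. But the technical route is genuinely different. The paper proceeds by \emph{exhibiting} the extremizer directly: it defines $\tilde\rho=\rho_{\min}\mathcal{X}_{\mathcal{A}}+\rho_{\max}\mathcal{X}_{\mathcal{A}^c}$, where $\mathcal{A}=\{y:\psi(x-y)>\psi^*(d)\}$ with $|\mathcal{A}|=d=(\rho_{\max}-c)/(\rho_{\max}-\rho_{\min})$, computes $\int\psi_x\tilde\rho$ explicitly, and then shows $\int\psi_x(\rho-\tilde\rho)\ge 0$ by a one-step sign comparison: on $\mathcal{A}$ one has $\psi_x\ge\psi^*(d)$ and $\rho-\rho_{\min}\ge 0$, while on $\mathcal{A}^c$ one has $\psi_x\le\psi^*(d)$ and $\rho-\rho_{\max}\le 0$, so both integrands dominate $\psi^*(d)\cdot(\rho-\tilde\rho)$, and the total then telescopes to $\psi^*(d)(c-\rho_{\min}d-\rho_{\max}(1-d))=0$. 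You instead normalize to $\sigma\in[0,1]$ with $\int\sigma=\alpha$, apply the layer-cake formula in the $\psi_x$ variable, and bound each level-set integral $\int_{E_t}\sigma$ by $\min(m(t),\alpha)$ and $\max(0,\alpha-1+m(t))$, then re-sum via a Fubini computation. Both are valid; the paper's argument is shorter and avoids the distribution-function bookkeeping, while yours makes the role of the constraint $\int\sigma=\alpha$ and the rearrangement $\psi^*$ a bit more transparent (and your parenthetical Hardy--Littlewood route at the end is essentially the paper's argument restated in the normalized variables). The paper also adds a remark identifying $\tilde\rho$ as the KKT minimizer of the associated constrained optimization problem, which your layer-cake route sidesteps.

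One small note for completeness: in your pointwise bound $\int_{E_t}\sigma\ge\max(0,\alpha-1+m(t))$, and in the Fubini identification of $\int_0^\infty\min(m(t),\alpha)\,dt$ with $\int_0^\alpha\psi^*$, you should be slightly careful at values of $t$ where $\psi^*$ has flat pieces (so that $m$ jumps). The identities hold as stated because $m$ and $\psi^*$ are generalized inverses of one another and both integrals compute the same area; but it is worth saying this explicitly, as the statement you want is an equality, not just a one-sided bound.
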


\begin{proof}
  For the lower bound, fix an $x\in\mathbb{T}$. Consider the set
  \[\mathcal{A}=\{y\in\mathbb{T} : \psi(x-y)>\psi^*(d)\},
    \quad \text{with}\quad |\mathcal{A}| = d :=\frac{\rhoM-c}{\rhoM-\rhom},\]
  and define a function
  \begin{equation}
    \label{rhostar}
    \tilde\rho (y) = \rhom \mathcal{X}_{\mathcal{A}} (y) + \rhoM \mathcal{X}_{\mathcal{A}^c} (y)=\begin{cases}\rhom&y\in\mathcal{A}\\ \rhoM&y\in \mathcal{A}^c=\mathbb{T}\backslash\mathcal{A}\end{cases},
  \end{equation}
  where $\mathcal{X}_\mathcal{A}$ denotes the indicator function of
  set $\mathcal{A}$. Let us check
  \begin{align*}
    &\int_{\mathbb{T}}\psi(x-y)\tilde\rho(y)\,dy=
   \rhom\int_{\mathcal{A}}\psi(x-y)\,dy+\rhoM\int_{\mathcal{A}^c}\psi(x-y)\,dy
    \\                                          
    &\qquad=\rhom\int_0^d\psi^*(y)\,dy + \rhoM\int_d^1\psi^*(y)\,dy=
    \rhom\|\psi\|_{L^1}+(\rhoM-\rhom)\gamma_1.
  \end{align*}
  It remains to show
$$
\int_{\mathbb{T}}\psi(x-y)(\rho(y) - \tilde\rho(y)) \,dx\geq 0, 
$$
Indeed, we have
\begin{align*}
 & \int_\mathbb{T}\psi(x-y)(\rho(y) - \tilde\rho(y))\,dy  = 
 \int_{\mathcal{A}}  \psi(x-y)(\rho(y) - \rhom)\, dy+
 \int_{\mathcal{A}^c}  \psi(x-y)(\rho(y) - \rhoM)\, dy\\
&\qquad\geq  \psi^*(d)\int_{\mathcal{A}}  (\rho(y)- \rhom)\, dy + \psi^*(d) \int_{\mathcal{A}^c}  (\rho(y) - \rhoM)\, dy\\
&\qquad=\psi^*(d) \left( \int_\mathbb{T} \rho(y)\, dy -
  \rhom\,|\mathcal{A}| - \rhoM\,|\mathcal{A}^c|\right)
  =\psi^*(d) \Big( c - \rhom d- \rhoM(1-d) \Big)= 0.
\end{align*}
The upper bound can be obtained similarly by considering the set
  \[\hat{\mathcal{A}}=\{y\in\mathbb{T} : \psi(x-y)>\psi^*(d)\},
    \quad \text{with}\quad |\hat{\mathcal{A}}| = \hat{d} :=\frac{c-\rhom}{\rhoM-\rhom},\]
  and the function
\[
    \hat\rho (y) = \rhoM \mathcal{X}_{\hat{\mathcal{A}}} (y) + \rhom \mathcal{X}_{\hat{\mathcal{A}}^c} (y).
  \]
We omit the details of the proof.
\end{proof}

\begin{remark}
The function $\tilde\rho$ in \eqref{rhostar} is the
minimizer of the optimization problem
$$
\min_{\rho}\int_{\mathbb{T}} \psi(x-y)\rho(y)\,dy,
$$
subject to the constraints in \eqref{eq:constraint}.
Indeed, the Lagrange function of the constraint minimization problem
is
$$
L(\rho,\mu_1,\mu_2,\kappa) = \int_{\mathbb{T}} \Big(\psi\rho +
\mu_1(\rho - \rhoM) - \mu_2 (\rho - \rhom) - \kappa\rho\Big) dy +
\kappa c,
$$
where $\mu_1,\mu_2\in L_+^\infty (\mathbb{T})$ and
$\kappa\in\mathbb{R}$ are Lagrange multipliers.
The Karush-Kuhn-Tucker (KKT) conditions read
\begin{align*}
& \psi + \mu_1 - \mu_2 - \kappa = 0,\quad \text{(Stationarity)},\\
& \rhom\leq \tilde\rho \leq \rhoM,\quad \int_{\mathbb{T}}\tilde \rho(y) dy = c, \quad \text{(Primal feasibility)},\\
& \mu_1,\mu_2\geq 0,\quad \text{(Dual feasibility)},\\
& (\tilde\rho - \rhoM)\mu_1 = (\tilde\rho - \rhom)\mu_2 = 0,\quad \text{(Complementary slackness)}.
\end{align*}
We choose $\mu_1 = -\min\{ 0,\psi - \kappa \} $ and $\mu_2 = \max\{
0,\psi-\kappa \}$. This ensures that the stationarity and dual
feasibility conditions are satisfied.
The complementary slackness conditions require $\tilde\rho\equiv
\rhoM$ when $\psi<\kappa$ and $\tilde\rho\equiv\rhom$ when
$\psi>\kappa$. Finally, to ensure the primal feasibility, we obtain
$\kappa=\psi^*(d)$. Altogether, we end up with \eqref{rhostar}.
\end{remark}

As a special case of Lemma \ref{psistarnewboundslem}, if we choose
$\rhoM$ and $\rhom$ as
\begin{equation}\label{eq:rhomM}
  0\le\rhom<c<\rhoM\le2c,\quad \rhom+\rhoM=2c,
\end{equation}
then \eqref{psistarnewbounds} holds with
\[\gamma_1=\gamma_2=\gamma:=\int_{\frac12}^1 \psi^*(y)\,dy.\]
It will dramatically simplify the analysis.
We also observe
\[2\gamma<\int_0^1\psi^*(y)\,dy=\|\psi\|_{L^1}.\]
In the following construction, we will choose
\begin{equation}\label{eq:rhomM2}
  \rhom=0,\quad \rhoM=2c.
\end{equation}
We shall comment that \eqref{eq:rhomM} is not the only choice that
leads to an invariant region.
We will keep using the notations $\rhoM$ and $\rhom$ throughout the
construction for generality.

\subsection{Construction of invariant region}
We shall construct the invariant region in light of $\Sigma^\ast$ as
in Section \ref{analysis1}. The main difference would be that the region
$\Sigma^\ast$ need to satisfy the additional restriction
\eqref{eq:SigmaL}.
We will make use of the improved bounds \eqref{psistarnewbounds}.
Let us denote
\begin{subequations}
\label{betaminmax}
\begin{align}
& \betaM := \rhoM\|\psi\|_{L^1} - (\rhoM-\rhom) \gamma_2. \label{betamax}\\
& \betam := \rhom\|\psi\|_{L^1} + (\rhoM-\rhom)\gamma_1,\label{betamin}
\end{align}
\end{subequations}
Unlike definition \eqref{eq:betaMm}, $\betaM$ and $\betam$ depend
on the density $\rhoM$ and $\rhom$. 

We will carry over the same notations from Section \ref{analysis1} to avoid excess notations. However, it should be noted that the functions $\hat z,\tilde z,\hat\theta,\tilde\theta$ now depend on $\rhoM,\rhom$. To avoid confusion we restate the expressions for $\hat z,\tilde z$,
\begin{align}
\label{hattildez}
& \hat z = \sqrt{\frac{4kc}{\betaM^2} - 1}, \quad 
\tilde z = \sqrt{\frac{4kc}{\betam^2} - 1}.
\end{align}
Note that if we choose $\rhoM$ and $\rhom$ as in \eqref{eq:rhomM2},
then $\hat z$ and $\tilde z$ have the explicit forms in \eqref{eq:zl1}.

Our construction of the invariant region will follow the procedure in Section \ref{analysis1}. Here we focus on the construction of the weak alignment case $\Sigma_1^\ast$. The other two cases can be treated similarly.
Let us assume $\betaM<2\sqrt{kc}$.

\textbf{Step 1:} On the $(p,q)$ plane, we construct the first segment of
the boundary of the invariant region
\begin{equation}
\label{l1c1}
C_1 = \{ (p,q): (\hat p(t),\hat q(t)), t\in [t_1,0] \},
\end{equation}
where $(\hat p, \hat q)$ satisfy the dynamics
\[
\hat p' = k-kc\hat q,\qquad
\hat q' = \hat p - \betaM\hat q,
\]
with initial data 
\[\hat p(0) = \tfrac{\betaM}{\rhoM},\quad\hat q(0) = \tfrac{1}{\rhoM}\]
and time $t_1<0$ such that $\hat q(t_1)=1/c$.
Here we choose a different initial point that Section \ref{weakal}. 
We take $q(0) = \tfrac{1}{\rhoM}$ so that $\rho(0)\le\rhoM$. The
choice of $p(0)$ ensures $\rho(t)\le\rhoM$, which is necessary for
\eqref{eq:SigmaL} to hold.

Using similar calculations as in Step 1 of Section \ref{weakal}, we have
\begin{align*}
& \hat p(t) = \frac{\betaM}{c} - e^{-\frac{\betaM t}{2}}  \left( \frac{1}{c}-\frac{1}{\rhoM} \right) \left( \betaM\cos\hat\theta t - \frac{(kc - \frac{\betaM^2}{2})}{\hat\theta}\sin\hat\theta t \right) ,\\
& \hat q(t) = \frac{1}{c} -e^{-\frac{\betaM t}{2}}\left( \frac{1}{c}  - \frac{1}{\rhoM} \right)\left( \cos\hat\theta t + \frac{\betaM}{2\hat\theta}\sin\hat\theta t \right) .
\end{align*}
The final point of $C_1$ is $(p_1, 1/c)$ where
\begin{align}
\label{l1p1}
& p_1:= \hat p(t_1) = \frac{\betaM}{c} - \sqrt{kc}\left( \frac{1}{c} - \frac{1}{\rhoM}\right) e^{\frac{\tan^{-1}\hat z}{\hat z}}.
\end{align}
The value of $p_1$ depends on the choices of $\rhoM$ and $\rhom$.

The point $(p_1,1/c)$ should be the starting point of the next segment
$C_2$. To make sure $C_2$ continues to move upward as we trace time in
the negative direction, we require that $p_1$ lies at the left
hand-side of $\frac{\betam}{c}$, which is the equilibrium state of the
auxiliary system \eqref{mumaux}.
$p_1<\frac{\betam}{c}$ can be equivalently expressed as
\begin{equation}\label{eq:p1cond}
  \betaM - \betam < \sqrt{kc}\left( 1-\frac{c}{\rhoM}\right) e^{\frac{\tan^{-1}\hat z}{\hat z}}.
\end{equation}

\textbf{Step 2:} Assume that \eqref{eq:p1cond} holds.
We continue with the next segment of the boundary of the invariant region
\begin{equation}
\label{l1c2}
 C_2 = \{ (p,q): (\tilde p(t),\tilde q(t)), t\in [t_2,0] \},
\end{equation}
where $(\tilde p,\tilde q)$ satisfy the dynamics 
\[
  \tilde p' = k-kc\tilde q,\qquad
  \tilde q' = \tilde p - \betam\tilde q,
\]
with initial data $\tilde p(0) = p_1$, $\tilde q(0) = \frac{1}{c}$,
and $t_2$ is the first negative time such that $\tilde q(t_2) = 1/c$.

Using similar calculations as in Step 1 of Section \ref{weakal}, we have
\begin{align*}
& \tilde p(t) = \frac{\betam}{c}+ e^{-\frac{\betam t}{2}}\left[ \left(p_1 - \frac{\betam}{c}\right)\cos\tilde\theta t + \left( \frac{p_1\betam}{2\tilde\theta} - \frac{\beta^2_{min}}{2 c\tilde\theta}\right)\sin\tilde\theta t \right],\\
& \tilde q(t) = \frac{1}{c} + \frac{e^{-\frac{\betam t}{2}}}{\tilde\theta}\left(p_1 - \frac{\betam}{c}\right)\sin\tilde\theta t.
\end{align*}
We find that the final point of $C_2$ is $(p_2,1/c)$ where
\begin{equation}
\label{l1p2}
p_2 := \tilde p(t_2)  = \frac{\betam}{c}\left( 1+e^{\frac{\pi}{\tilde z}} \right) - p_1 e^{\frac{\pi}{\tilde z}},
\end{equation}
which also depends on the choices of $\rhoM$ and $\rhom$.

The point $(p_2,1/c)$ should be the starting point of the next segment
$C_3$. To make sure $C_3$ continues to move downward as we trace time
in the negative direction, we require that $p_2$ lies at the right
hand-side of $\frac{\betaM}{c}$, which is the equilibrium state of the
auxiliary system \eqref{muMaux}.
$p_2>\frac{\betaM}{c}$ can be equivalently expressed as 
\begin{equation}\label{eq:p2cond}
  \betaM-\betam < \sqrt{kc}\left( 1-\frac{c}{\rhoM}\right) e^{\frac{\tan^{-1}\hat z}{\hat z}}\cdot\frac{e^{ \frac{\pi}{\tilde z}} }{1+ e^{ \frac{\pi}{\tilde z}}},
\end{equation}
where $R$ is defined in \eqref{eq:p1cond}.
Note that condition \eqref{eq:p2cond} is stronger than \eqref{eq:p1cond}.

\textbf{Step 3:}
Assume that \eqref{eq:p2cond} holds. The next segment of the boundary of invariant region
\begin{equation}
\label{l1c3}
C_3 = \{ (p,q): (\hat p(t),\hat q(t)), t\in [t_3,0] \}
\end{equation}
is constructed from the dynamics 
\[
\hat p' = k-kc\hat q,\qquad
\hat q' = \hat p - \betaM\hat q,
\]
with initial data $\hat p(0) = p_2$, $\hat q(0) = \frac{1}{c}$,
and $t_3$ is the first negative time such that $\hat q(t_3) = 1/\rhoM$.
We have
\begin{align*}
& \hat p(t) = \frac{\betaM}{c} + e^{-\frac{\betaM t}{2}}\left[ \left(p_2 - \frac{\betaM}{c}\right)\cos\hat \theta t + \left(\frac{p_2\betaM}{2 \hat \theta} - \frac{\betaM^2}{2c\hat \theta}\right)\sin\hat\theta t \right],\\
& \hat q(t) = \frac{1}{c} + \frac{e^{-\frac{\betaM t}{2}}}{\hat \theta}\left(p_2 - \frac{\betaM}{c}\right)\sin \hat\theta t.
\end{align*}

To ensure the existence of $t_3$ such that $\hat q(t_3)=1/\rhoM$, we
state the following Lemma. 
\begin{lemma}
\label{l1clocon2lem}
Let $t_*$ be the first negative time such that $\hat q'(t_*) = 0$. Then $\hat q(t_*)<1/\rhoM$ if and only if 
\[
  (\betaM-\betam) \left( 1 + e^{-\frac{\pi}{\tilde z}} \right) e^{- \frac{\tan^{-1}(\hat z)}{\hat z}} < \sqrt{kc}\left( 1-\frac{c}{\rhoM} \right)\left(1- e^{-\frac{\pi}{\hat z}-\frac{\pi}{\tilde z}}  \right),
\]
or equivalently,
\begin{equation}\label{l1clocon2}
\betaM - \betam < \sqrt{kc}\left( 1-\frac{c}{\rhoM}\right) e^{\frac{\tan^{-1}\hat z}{\hat z}}\cdot \frac{(1- e^{-\frac{\pi}{\hat z}-\frac{\pi}{\tilde z}})}{( 1 + e^{-\frac{\pi}{\tilde z}})}.
\end{equation}
\end{lemma}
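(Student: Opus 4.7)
The plan is to mimic the proof of Lemma \ref{clocon2lem} essentially verbatim, with the only modifications being that (i) the initial point for $C_3$ is now $(p_2, 1/c)$ with $p_2$ given by \eqref{l1p2} in terms of $p_1$ from \eqref{l1p1}, which now depends on $\rhoM$, and (ii) the target level is $1/\rhoM$ rather than $0$. The ``only if'' and ``if'' directions are read off from the same chain of equivalent inequalities, so it suffices to derive one necessary-and-sufficient condition.

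First I would use the explicit formulas for $(\hat p,\hat q)$ written right before the lemma to compute $\hat q'(t)=\hat p(t)-\betaM\hat q(t)$, which after the usual rearrangement gives $\tan(\hat\theta t_*)=2\hat\theta/\betaM=\hat z$, so the first negative root is
\[
\hat\theta t_*=-\pi+\tan^{-1}\hat z.
\]
Plugging this into the expression for $\hat q(t_*)$, using $\sin(\tan^{-1}\hat z)=\hat z/\sqrt{1+\hat z^2}=\hat\theta/\sqrt{kc}$ (since $\hat z^2+1=4kc/\betaM^2$), and simplifying, the inequality $\hat q(t_*)<1/\rhoM$ becomes equivalent to
\[
(cp_2-\betaM)\,e^{\frac{\pi}{\hat z}-\frac{\tan^{-1}\hat z}{\hat z}}>\sqrt{kc}\Bigl(1-\tfrac{c}{\rhoM}\Bigr).
\]

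Next I would eliminate $p_2$ using \eqref{l1p2} and then $p_1$ using \eqref{l1p1}. This gives
\[
cp_2-\betaM=\sqrt{kc}\Bigl(1-\tfrac{c}{\rhoM}\Bigr)e^{\frac{\tan^{-1}\hat z}{\hat z}+\frac{\pi}{\tilde z}}-(\betaM-\betam)\bigl(1+e^{\frac{\pi}{\tilde z}}\bigr).
\]
Substituting and collecting the $\sqrt{kc}(1-c/\rhoM)$ terms on one side, and dividing through by $e^{\pi/\hat z+\pi/\tilde z}$, I arrive at
\[
(\betaM-\betam)\bigl(1+e^{-\frac{\pi}{\tilde z}}\bigr)e^{-\frac{\tan^{-1}\hat z}{\hat z}}<\sqrt{kc}\Bigl(1-\tfrac{c}{\rhoM}\Bigr)\bigl(1-e^{-\frac{\pi}{\hat z}-\frac{\pi}{\tilde z}}\bigr),
\]
which is the first form stated in the lemma. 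Rearranging to isolate $\betaM-\betam$ and using $1+e^{-\pi/\tilde z}>0$ yields the equivalent inequality \eqref{l1clocon2}. Both steps are reversible, so the condition is indeed necessary and sufficient.

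The only real obstacle is purely bookkeeping: tracking the extra factor $(1-c/\rhoM)$ that appears in $p_1$ (and hence propagates to $p_2$) relative to the $\rhoM=\infty$ case of Lemma \ref{clocon2lem}, and making sure that this same factor appears correctly as the right-hand side's shift $1/c-1/\rhoM$ when $\hat q(t_*)$ is compared to $1/\rhoM$ rather than to $0$. Since both the initial condition for $C_3$ and the target level are modified in a consistent way, the cancellations go through exactly as in the bounded case, and no new idea is required.
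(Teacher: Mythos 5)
Your proposal is correct and takes exactly the approach the paper intends: the paper explicitly omits the proof, stating that ``the proof of the Lemma follows similar arguments as Lemma \ref{clocon2lem}.'' Your computation of $\hat\theta t_* = -\pi + \tan^{-1}\hat z$ from $\hat q'(t_*)=0$, the evaluation $\hat q(t_*) = \tfrac1c - \tfrac{e^{\pi/\hat z - \tan^{-1}\hat z/\hat z}}{\sqrt{kc}}(p_2-\tfrac{\betaM}{c})$, and the back-substitution of $p_2$ and $p_1$ all check out, with the factor $\sqrt{kc}(1-c/\rhoM)$ correctly propagating from \eqref{l1p1} through \eqref{l1p2} into the final condition. (Incidentally, your phrasing is slightly cleaner than the paper's proof of Lemma \ref{clocon2lem}, which writes ``Solving for $\hat q(t)=0$'' where it clearly means $\hat q'(t)=0$.)
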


The proof of the Lemma follows similar arguments as Lemma
\ref{clocon2lem}, which we will omit here. The admissible condition \eqref{l1clocon2} is similar as \eqref{clocon2}, differed only by a factor, as the starting point of the construction is different.
The Lemma ensures that the trajectory of $C_3$ 
hits the line $q=1/\rhoM$ first before completing the outward spiral
turn. Moreover, at the intersection $p_3=\hat p(t_3)> \frac{\betaM}{\rhoM}$.
It is easy to observe that condition \eqref{l1clocon2} is stronger than \eqref{eq:p1cond} and
\eqref{eq:p2cond}.

Now we are ready to construct the invariant region
\begin{equation}\label{l1invreg}
 \SigmaL^\ast = \text{open set enclosed by $C_1,C_2,C_3$ and $C_4$},
\end{equation}
where $C_4$ is the line segment
\[C_4=\left\{ (p,q) : p\in(\tfrac{\betaM}{\rhoM}, p_3),\, q=\tfrac{1}{\rhoM}\right\}.\]
Figure \ref{l1invregpq} gives an illustration of the invariant region.
We can further make use of the transformation $F$ as in
\eqref{transmap1} to obtain the invariant region $\SigmaL^1$ in the $(G,\rho)$
plane
\begin{equation}\label{l1subcr}
  \SigmaL^1 := F(\SigmaL^\ast).
\end{equation}
See Figure \ref{l1subcrfig} for an illustration of $\SigmaL^1$.

\begin{figure}[ht!] 
\centering
\subfigure{\includegraphics[width=0.6\linewidth]{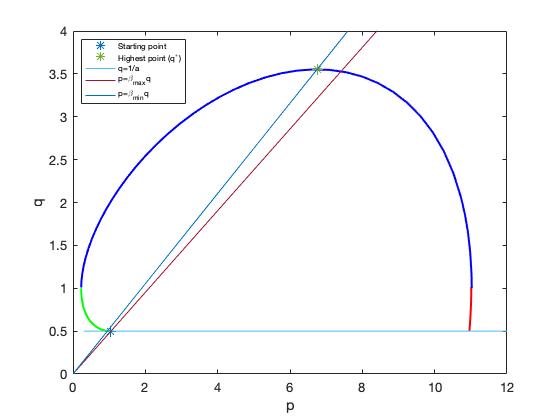}}
\caption{Invariant region for $k=4,c=1,\rhoM=2,\rhom=0$ and influence function with $\|\psi\|_{L^1} = 2, \gamma_1=\gamma_2 = 0.95$.}
\label{l1invregpq}
\end{figure}

\begin{proposition}[Invariant region]\label{l1propmain}
  Let $4kc>\betaM^2$. Assume condition \eqref{l1clocon2} holds.
  Consider the initial value problem of
  \eqref{mainodesys} with $(w(0),s(0))\in\SigmaL^\ast$. In addition,
  assume 
  \begin{equation}\label{eq:betaaprioribound}
    \betam\le\psi\ast\rho\le\betaM.
  \end{equation}
  Then the solution $(w(t),s(t))\in\SigmaL^\ast$ for all $t>0$.
\end{proposition}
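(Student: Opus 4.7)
The strategy is to adapt the phase-plane contradiction argument from Proposition~\ref{propmainwal} to the present geometry of $\SigmaL^\ast$. The a priori bounds \eqref{eq:betaaprioribound} are the precise analogue of \eqref{eq:psibounded}: once we know $\betam\le\psi\ast\rho\le\betaM$ along the trajectory, the nonlocal quantity behaves (for comparison purposes) like a bounded coefficient, and the closed auxiliary systems \eqref{muMaux}--\eqref{mumaux} can be used to bound the motion.

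The plan is to show separately that the solution trajectory $(w(t),s(t))$ starting inside $\SigmaL^\ast$ cannot touch any of the four boundary pieces $C_1,C_2,C_3,C_4$. For the three curved arcs $C_1,C_2,C_3$, I would repeat the computation of Proposition~\ref{propmainwal} essentially verbatim: eliminate time to obtain the trajectory equation $ds/dp=(p-s\,\psi\ast\rho)/(k-kcs)$, compare with $d\hat q/dp$ (resp.\ $d\tilde q/dp$) on the relevant arc, and use the identity
\[
\frac{d(\hat q-s)}{dp}=\frac{cp-\psi\ast\rho}{k(1-c\hat q)(1-cs)}\,(\hat q-s)-\frac{\hat q\,(\betaM-\psi\ast\rho)}{k(1-c\hat q)},
\]
together with the sign $\betaM-\psi\ast\rho\ge 0$ (supplied by \eqref{eq:betaaprioribound}) to obtain a one-sided Gronwall inequality; integrating from the contact point into a small neighborhood produces the contradiction exactly as before. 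The same reasoning handles $C_2$ (via $\psi\ast\rho-\betam\ge 0$) and $C_3$.

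The genuinely new piece is the horizontal bottom segment $C_4=\{(p,1/\rhoM):p\in(\betaM/\rhoM,\,p_3)\}$. Here I would argue directly from the vector field: on any interior point of $C_4$,
\[
s'=w-s\,(\psi\ast\rho)=p-\tfrac{1}{\rhoM}(\psi\ast\rho)\ge p-\tfrac{\betaM}{\rhoM}>0,
\]
so $s$ strictly increases and the trajectory crosses $C_4$ inward. For the corners $(p_1,1/c)$ and $(p_2,1/c)$ where two curved arcs meet, the boundary is the $C^1$ concatenation of two integral curves of affine linear systems, and one applies the Gronwall argument along the corresponding tangent direction, as in Proposition~\ref{propmainwal}. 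The remaining corner $(p_3,1/\rhoM)$ is handled by combining the $C_3$ comparison for approach from above with the $C_4$ sign calculation for approach from the right.

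The main technical obstacle is bookkeeping: verifying that replacing the initial point $(0,0)$ of $C_1$ by $(\betaM/\rhoM,1/\rhoM)$ does not invalidate any of the sign conditions used in the Gronwall estimates. Because every point of $\SigmaL^\ast$ satisfies $q\ge 1/\rhoM>0$, all factors $\hat q$, $\tilde q$, $1-c\hat q$, $1-c\tilde q$ keep the same signs as in Section~\ref{weakal}, so the inequalities carry over without modification. The admissible condition \eqref{l1clocon2} plays the dual role of ensuring that $\SigmaL^\ast$ is a well-defined closed region and that $p_3>\betaM/\rhoM$, which is exactly what is needed to keep the strict inequality on $C_4$.
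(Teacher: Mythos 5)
Your proposal matches the paper's own proof: the arcs $C_1,C_2,C_3$ are handled by the same phase-plane Gronwall comparison as in Proposition~\ref{propmainwal}, and the new horizontal segment $C_4$ is handled by the direct vector-field sign computation $s'=w-s(\psi\ast\rho)>\betaM/\rhoM-\betaM/\rhoM\geq 0$. The extra remarks on corners and on why the shifted starting point $(\betaM/\rhoM,1/\rhoM)$ preserves the sign structure are sound but not strictly necessary; the argument is essentially identical to the paper's.
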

\begin{proof}
The arguments that the trajectory does not cross $C_1,C_2,C_3$ are
entirely similar to the ones in the proof of Proposition
\ref{propmainwal}. If $(w,s)\in C_4$, meaning $w> \frac\betaM\rhoM$
and $s=\frac1\rhoM$, we get from \eqref{mainodesysb} that
\[s' = w-s\,(\psi\ast\rho)> \tfrac\betaM\rhoM - \tfrac1\rhoM\cdot \betaM\geq0.\]
Therefore, trajectories can not touch  trajectories with initial point
inside $\SigmaL^\ast$ never touch $C_4$ as well.
By continuity of the trajectories, we conclude that $(w(t),s(t))$
stays in $\SigmaL^\ast$ all time.
\end{proof}

For the other two cases, $\SigmaL^2,\SigmaL^3$ can be constructed very much alike as long the lines of $\Sigma_2,\Sigma_3$ respectively. The only difference is that the corresponding invariant regions on the $(p,q)$ plane now start from a point in the first quadrant, namely $(\frac\betaM\rhoM , \frac{1}{\rhoM})$, instead of the origin. Since the respective calculations and consequent proof to the second and third assertions of Theorem \ref{l1gs} follows along the lines of the first assertion, we only prove the first assertion here and state the regions $\SigmaL^2, \SigmaL^3$. 
\begin{equation}\label{l1subcr2}
\SigmaL^2 = F(\Sigma_2^\ast ),
\end{equation}
where, 
$$
\Sigma_2^\ast = \text{unbounded open set surrounded by } C_1, C_2, \{(p,0): p>\tfrac{\betaM}{\rhoM}\} \text{ on 3 sides},
$$
with $C_1$ as in \eqref{salc1} and $\hat p,\hat q$ with initial data $(\tfrac\betaM\rhoM, \tfrac1\rhoM)$ and $C_2$ as in \eqref{salc2}. Similarly, 
\begin{equation}\label{l1subcr3}
\SigmaL^3 = F(\Sigma_3^\ast ),
\end{equation}
where, 
$$
\Sigma_3^\ast = \text{open set enclosed by $C_1,C_2,C_3$ and $q=\tfrac1\rhoM$},
$$ 
with $C_1,C_2,C_3$ as in Section \ref{mediumal} but $C_1$ obtained from $\hat p,\hat q$ with initial data $(\tfrac\betaM\rhoM, \tfrac1\rhoM)$.

\subsection{Proof of Theorem \ref{l1gs}}
We are ready to apply Proposition \ref{l1propmain} and prove Theorem
\ref{l1gs}. We will only prove the weak alignment case. The other two cases works similarly. 
We choose $\rhoM=2c$ and $\rhom=0$ as in
\eqref{eq:rhomM2}. It implies 
\[\betaM=2c(\|\psi\|_{L^1}-\gamma)\quad\text{and}\quad\betam=2c\gamma.\]

Let us validate all the assumptions in Proposition \ref{l1propmain}.
First, the hypothesis of the Theorem $\|\psi\|_{L^1} - \gamma <\frac\g2$
implies
\[\betaM^2=4c^2(\|\psi\|_{L^1}-\gamma)^2<4kc.\]
Second, the admissible condition \eqref{eq:wscond} implies
\eqref{l1clocon2}. Indeed, we have
\[\betaM-\betam=2c(\|\psi\|_{L^1}-2\gamma)
 < \frac{\sqrt{kc}}{2}\cdot\frac{e^{\frac{\tan^{-1}\hat z}{\hat
      z}}\left(1 - e^{ -\frac{\pi}{\tilde z} - \frac{\pi}{\hat z}
    }\right) }{\left(1 + e^{-\frac{\pi}{\tilde z}}\right)}.\]
Finally, owing to Lemma \ref{psistarnewboundslem}, we conclude that
\eqref{eq:betaaprioribound} holds as long as $\rho$ is uniformly
bounded above by $2c$ (and below by $0$).

Consider subcritical initial data $(G_0(x),\rho_0(x))\in\SigmaL^1$ for
all $x\in\mathbb{T}$. Along each characteristic path \eqref{chpath},
there is dynamics \eqref{mainodesys} with initial data $(w(0),
s(0))\in\SigmaL^\ast$. We claim that
$(w(t), s(t))\in\SigmaL^\ast$ for any $t\ge0$ along any characteristic path.

Let us argue by contradiction. Suppose there exists a first time $t_0$
and a characteristic path such that $(w(t_0),s(t_0))\not\in\SigmaL^\ast$.
By continuity of the dynamics \eqref{mainodesys}, we have that along
every characteristic path 
$(w(t_0),s(t_0))\in \overline{\SigmaL^\ast}$. Since
$\overline{\SigmaL^\ast}\subset\{(p,q) : q\ge\frac{1}{\rhoM}\}$, we
obtain the uniform bound $s(t_0,x)\geq\frac{1}{2c}$ and hence
$\rho(t_0,x)\in(0,2c]$. Now, we can apply Proposition \ref{l1propmain}
and get $(w(t_0),s(t_0))\in\SigmaL^\ast$. This leads to a contradiction.

Collecting all characteristic paths, and applying the transformation $F$ in
\eqref{transmap1}, we conclude that $(G(t,x), \rho(t,x))\in\SigmaL^1$
for all $x\in\mathbb{T}$ and $t\ge0$.
Therefore, $(G,\rho)$ remain bounded in all time.
Consequently, by Theorem \ref{local}, we have that $(\rho,u)$ is
global-in-time smooth solution to \eqref{mainsys}.

\begin{remark}
 We would like to remark the invariant region $\SigmaL^1$ is a subset of
 \[\{(G,\rho) : \tfrac1{q^\ast}\le\rho\le\rhoM\},\]
 where $q^\ast$ is the highest tip of $\SigmaL^\ast$. This leads to an improved bound on $\rhom$, and consequently better bounds on $\betam$ and $\betaM$. Repeating the procedure with the new bounds, we can obtain a larger invariant region. Finding the optimal (or largest) invariant region is beyond the scope of this paper. We shall leave this for future investigations.
\end{remark}

\section*{Acknowledgments}
This research was supported by the National Science Foundation under
grants DMS18-12666 (MB and HL), DMS18-53001 and DMS21-08264 (CT).

\bigskip

\bibliographystyle{abbrv}

\begin{thebibliography}{10}
\bibitem{BL19}
 M. Bhatnagar and H. Liu.
 \newblock Critical thresholds in one-dimensional damped Euler-Poisson systems.
 \newblock {\em Math. Mod. Meth. Appl. Sci.}, 30(5): 891--916, 2020. 
 
 \bibitem{BL201}
 M. Bhatnagar and H. Liu.
 \newblock Critical thresholds in 1D pressureless Euler-Poisson systems with variable background.
 \newblock {\em Physica D: Nonlinear Phenomena}, 414: 132728, 2020. 
 
 \bibitem{BL202}
 M. Bhatnagar and H. Liu.
 \newblock Well-posedness and critical thresholds in nonlocal Euler system with relaxation.
 \newblock {\em Disc. Cont. Dyn. Sys.}, 41(11): 5271--5289, 2021. 	
 
 \bibitem{BLws}
 M. Bhatnagar and H. Liu.
 \newblock Global dynamics of the Euler-alignment system with weakly singular kernel.
 \newblock {\em ArXiv: 2110.10314}, 2021. 	
 
 \bibitem{BRR94}
 U. Brauer, A. Rendall and O. Reula.
 \newblock The cosmic no-hair theorem and the non-linear stability of homogeneous Newtonian cosmological models.
 \newblock {\em Classical and Quantum Gravity}, 11(9): 2283, 1994. 	
 

\bibitem{CCTT16}
 J. A. Carrillo, Y.-P. Choi, E. Tadmor, and C. Tan.
 \newblock Critical thresholds in 1D Euler equations with non-local forces.
 \newblock {\em  Math. Mod. Meth. Appl. Sci.}, 26:185--206, 2016.
 
 
\bibitem{CCZ16}
 J.A. Carrillo, Y.P. Choi, E. Zatorska.
 \newblock On the pressureless damped Euler-Poisson equations with quadratic confinement.
 \newblock {\em  Math. Mod. Meth. Appl. Sci.}, 26: 2311-2340, 2016.
 
 \bibitem{CS07a}
 F. Cucker and S. Smale.
 \newblock Emergent Behavior in flocks.
 \newblock {\em  IEEE Transactions on Automatic Control}, 52(5): 852--862, 2007.
 
 \bibitem{CS07b}
 F. Cucker and S. Smale.
 \newblock On the mathematics of emergence.
 \newblock {\em  Japanese Journal of Mathematics}, 2(1): 197--227, 2007.

\bibitem{Da16}
C.M. Dafermos.
 \newblock Hyperbolic Conservation Laws in Continuum Physics.
 \newblock {\em  Springer-Verlag Berlin Heidelberg}, Vol. 325, 2010 (3rd Ed.).

\bibitem{DKRT18}
 T. Do, A. Kiselev, L. Ryzhik, C. Tan.
 \newblock Global regularity for the fractional Euler alignment system.
 \newblock {\em  Arch. Rat. Mech. Anal.}, 228(1): 1--37, 2018.
 
\bibitem{ELT01}
 S. Engelberg, H. Liu, E. Tadmor.
 \newblock Critical thresholds in Euler-Poisson equations.
 \newblock {\em  Indiana University Math. Journal}, 50:109--157, 2001.


 \bibitem{GM62} 
 S. R. de Groot, and P. Mazur.
 \newblock Non-Equilibrium Thermodynamics. 
  \newblock {\em North-Holland Publishing Company}, Amsterdam, 1962.
  
  \bibitem{HaTa08}
 S.Y. Ha and E. Tadmor.
 \newblock From particle to kinetic and hydrodynamic descriptions of flocking.
 \newblock {\em  Kinetic and Related Models}, 1:415--435, 2008.
 

\bibitem{HJL81}
D.D. Holm, S.F. Johnson and K.E. Lonngren.
 \newblock  Expansion of a cold ion cloud.
 \newblock {\em Applied Physics Letters}, 38(7): 519--521, 1981. 
 
 \bibitem{HT17}
S. He,  E. Tadmor.
 \newblock  Global regularity of two-dimensional flocking hydrodynamics.
 \newblock {\em C. R. Math.}, 355(7): 795--805, 2017. 
 
  \bibitem{Ja75}
J.D. Jackson.
 \newblock  Classical electrodynamics.
 \newblock {\em Wiley}, 1975. 
  
\bibitem{KT18}
 A. Kiselev, C. Tan.
 \newblock Global regularity for 1D Eulerian dynamics with singular interaction forces.
 \newblock {\em  SIAM J. Math. Anal.}, 50(6):6208--6229, 2018.
 
\bibitem{Lax64}
P. Lax.
 \newblock Development of singularities of solutions of nonlinear hyperbolic partial differential equations.
 \newblock {\em  Journal of Math. Phys.}, 5, 611, 1964.
 
 \bibitem{Lee2}
Y. Lee and H. Liu.
 \newblock Thresholds in three-dimensional restricted Euler-Poisson equations.
 \newblock {\em Physica D: Nonlinear Phenomena}, 262: 59--70, 2013.

\bibitem{LL08}
 T. Li, H. Liu.
 \newblock Critical Thresholds in a relaxation model for traffic flows.
 \newblock {\em Indian Univ. Math. J.}, 57:1409--1431, 2008.
 
 
\bibitem{LL09j}
 T. Li, H. Liu.
 \newblock Critical Thresholds in a relaxation system with resonance of characteristic speeds.
 \newblock {\em  Disc. Cont. Dyn. Sys-Series A}, 24(2):511--521, 2009.
 
\bibitem{LL09}
 T. Li, H. Liu.
 \newblock Critical thresholds in hyperbolic relaxation systems.
 \newblock {\em  J. Diff. Eqns.}, 247:33--48, 2009.

 
 \bibitem{LT01}
 H. Liu, E. Tadmor.
 \newblock Critical thresholds in a convolution model for nonlinear conservation laws.
 \newblock {\em  SIAM J. Math. Anal.}, 33(4): 930--945, 2001.
  

\bibitem{LT02} 
H. Liu and E. Tadmor.
 \newblock  Spectral dynamics of the velocity gradient field in restricted flows.
 \newblock {\em Commun.  Math. Phys.}, 228: 435--466, 2002.

\bibitem{LT03}
 H. Liu, E. Tadmor.
 \newblock Critical Thresholds in 2-D restricted Euler-Poisson equations.
 \newblock {\em SIAM J. Appl. Math.}, 63(6):1889--1910, 2003.
 
 
\bibitem{LT04}
 H. Liu, E. Tadmor.
 \newblock Rotation prevents finite-time breakdown.
 \newblock {\em  Physica D}, 188:262--276, 2004.
 
 \bibitem{Ma86}
 T. Makino.
 \newblock On a local existence theorem for the evolution equation of gaseous stars.
 \newblock {\em  Studies in Mathematics and its Applications}, 18: 459--479, 1986.
 
 \bibitem{MaPe90}
 T. Makino and B. Perthame.
 \newblock Sur les solution {\`a} sym{\'e}trie sph{\'e}rique de l’equation d’Euler-Poisson pour l’evolution d’etoiles gazeuses.
 \newblock {\em  Japan Journal of Applied Mathematics}, 7(1): 165--170, 1990.
 
 \bibitem{MRS90}
 P.A. Markowich, C.A. Ringhofer and C. Schmeiser.
 \newblock Semiconductor equations.
 \newblock {\em  Springer-Verlag}, 1st edition, 1990.


 

 
\bibitem{MTX21}
 Q. Miao, C. Tan, and L. Xue.
 \newblock Global regularity for a 1D Euler-alignment system with misalignment.
 \newblock {\em Math. Mod. Meth. Appl. Sci.}, 31(3): 473--524, 2021. 

 

\bibitem{Shv21}
  R. Shvydkoy.
  \newblock Dynamics and analysis of alignment models of collective behavior.
  \newblock {Birkhäuser}, 2021.
  
\bibitem{ST17}
  R. Shvydkoy, E. Tadmor.
  \newblock Eulerian dynamics with a commutator forcing.
  \newblock {\em Trans. Math. and Appl.}, 1(1): tnx001, 2017.
 
\bibitem{TT14}
 E. Tadmor, C. Tan.
 \newblock Critical thresholds in flocking hydrodynamics with non-local alignment.
 \newblock {\em  Phil. Trans. R. Soc. A.}, 372: 20130401, 2014.

\bibitem{TW08}
 E. Tadmor, D. Wei.
 \newblock On the global regularity of subcritical Euler-Poisson equations with pressure.
 \newblock {\em  J. Eur. Math. Soc.}, 10:757--769, 2008.

\bibitem{Tan20}
C. Tan.
 \newblock On the Euler-alignment system with weakly singular communication weights.
 \newblock {\em  Nonlinearity}, 33(4): 1907--1924, 2020.

 \bibitem{Tan21}
C. Tan.
 \newblock Eulerian dynamics in multi-dimensions with radial symmetry.
 \newblock {\em  SIAM J. Math. Anal.}, 53(3): 3040--3071, 2021.

 
 \bibitem{Wa19}
 W.A. Yong.
\newblock Intrinsic properties of conservation-dissipation formalism of irreversible thermodynamics.
 \newblock {\em Phil. Trans. R. Soc. A} 378:20190177, 2020.  
 
\bibitem{WTB12}
 D. Wei, E. Tadmor, H. Bae.
 \newblock Critical Thresholds in multi-dimensional Euler-Poisson equations with radial symmetry.
 \newblock {\em  Commun. Math. Sci.}, 10(1):75--86, 2012.		


\end{thebibliography}

\end{document}